\pdfoutput=1
\documentclass[11pt,letterpaper]{article}
\usepackage[letterpaper,margin=1in]{geometry}
\usepackage[utf8]{inputenc}
\usepackage[T1]{fontenc}
\usepackage[dvipsnames]{xcolor}
\usepackage{amsmath,amssymb,amsthm,mathtools,mathrsfs}
\usepackage{enumitem,graphicx,booktabs,float}
\usepackage{tikz}
\usetikzlibrary{arrows.meta,calc,positioning,decorations.pathreplacing}
\usepackage{microtype}
\microtypesetup{expansion=false}
\usepackage{natbib}
\setcitestyle{authoryear,open={[},close={]},citesep={,},aysep={,},yysep={,}}
\usepackage{mathptmx}
\usepackage{courier}
\usepackage{aliascnt}
\usepackage[unicode=true,colorlinks=true,breaklinks=true]{hyperref}
\definecolor{selectedpurple}{HTML}{79538A}
\definecolor{curvepurple}{HTML}{6F3F8F}
\hypersetup{linkcolor=selectedpurple!85!black,citecolor=YellowOrange!85!black,
  urlcolor=Aquamarine!85!black,
  pdftitle={Silver Rate Is (Almost) Optimal for Gradient Descent: The Strongly Convex Case},
  pdfauthor={Kaizhao Liu, Yuhan Ye}}
\newtheorem{theorem}{Theorem}[section]
\newaliascnt{lemma}{theorem}
\newtheorem{lemma}[lemma]{Lemma}
\aliascntresetthe{lemma}
\newaliascnt{proposition}{theorem}
\newtheorem{proposition}[proposition]{Proposition}
\aliascntresetthe{proposition}
\newaliascnt{corollary}{theorem}
\newtheorem{corollary}[corollary]{Corollary}
\aliascntresetthe{corollary}
\newaliascnt{fact}{theorem}
\newtheorem{fact}[fact]{Fact}
\aliascntresetthe{fact}
\theoremstyle{definition}
\newaliascnt{definition}{theorem}

\aliascntresetthe{definition}
\newaliascnt{remark}{theorem}

\aliascntresetthe{remark}
\usepackage[nameinlink,capitalize]{cleveref}
\crefname{fact}{Fact}{Facts}
\Crefname{fact}{Fact}{Facts}
\crefname{appendix}{Appendix}{Appendices}
\Crefname{appendix}{Appendix}{Appendices}
\allowdisplaybreaks
\newcommand{\psil}{p_{\mathrm{sil}}}
\renewcommand{\d}{\mathrm{d}}
\newcommand{\R}{\mathbb R}
\newcommand{\N}{\mathbb N}
\newcommand{\Fclass}{\mathcal F}
\newcommand{\Derr}{D}
\newcommand{\Eerr}{E}
\DeclareMathOperator{\conv}{conv}
\DeclareMathOperator{\env}{env}

\newcommand{\yy}[1]{{\color{red}{#1}}}

\title{Silver Rate Is (Almost) Optimal for Gradient Descent:\\ The Strongly Convex Case}
\author{Kaizhao Liu\footnotemark[1]\\MIT\\\texttt{mrzt@mit.edu}
  \and Yuhan Ye\thanks{Authors are listed in random order.}\\MIT\\\texttt{yyh03@mit.edu}}
\date{\today}

\begin{document}
\maketitle
\begin{abstract}
We study gradient descent with predetermined nonnegative stepsizes on smooth strongly convex functions. Let $\psil=\log_2(1+\sqrt2)$ and $\kappa$ be the condition number. We prove the iteration lower bound
\[
 \Omega\!\left(
 \kappa^{\frac{1}{\psil}-o(1)}
 \log\frac1\delta\right)
\]
for both relative squared distance and relative function error, uniformly over $0<\delta<1$ and sufficiently large $\kappa$.
This matches the polynomial exponent of $\kappa$ for the Silver stepsize schedule established in \citet{altschuler2025hedging}.
\end{abstract}

\clearpage
\tableofcontents
\clearpage
\section{Introduction}\label{sec:introduction}
We study the limits of \textbf{gradient descent} (GD) for unconstrained smooth strongly convex minimization when its stepsizes are chosen in advance.
For a nonnegative schedule $H=(h_1,\ldots,h_n)\in[0,\infty)^n$, the iteration is
\begin{equation*}
 x_t=x_{t-1}-h_t\nabla F(x_{t-1}),\qquad 1\le t\le n.
\end{equation*}
Here, $F:\R^d\to\R$ is $\mu$-strongly convex with an $L$-Lipschitz gradient.
$\kappa:=L/\mu$ is the condition number.\footnote{\label{fn:normalization}Dividing the objective by $L$ and multiplying each stepsize by $L$ preserves the iterates. We therefore normalize $L=1$ and $\mu=1/\kappa$ throughout.}

Classical guarantees for constant-step GD follow by bounding the progress of each iteration separately~\citep{Polyak1963}; see also the textbooks by \citet{Polyak1987}, \citet{Nesterov2004}, and \citet{Bubeck2015}.
For quadratic objectives, \citet{Young1953} showed that Chebyshev stepsizes reduce the squared distance to the minimizer by a factor $0<\delta\le1/4$ in $O(\sqrt\kappa\log(1/\delta))$ iterations.
The matching $\Omega(\sqrt\kappa\log(1/\delta))$ lower bound for predetermined stepsizes follows from the minimax property of Chebyshev polynomials.
Classical acceleration augments the GD update with momentum or auxiliary sequences, as in Polyak's heavy-ball method~\citep{Polyak1964} and Nesterov's accelerated method~\citep{Nesterov1983}.

Beyond quadratics, however, whether predetermined stepsizes alone could accelerate GD remained open for decades.
Separate one-step bounds do not capture the performance of a varying stepsize schedule, leaving room for faster convergence by carefully coordinating short and long steps~\citep{Altschuler2018}.
The recursive \emph{silver stepsize schedule} of \citet{AltschulerParrilo2025} improves the classical function-value convergence rate on general smooth convex objectives from $O(n^{-1})$ to $O(n^{-\psil})$, where
\begin{equation*}
 \psil:=\log_2(1+\sqrt2)\approx1.27155.
\end{equation*}
Our previous work~\citep{ye2026silver} proves an $\Omega(n^{-\psil-o(1)})$ lower bound for every predetermined nonnegative schedule, confirming their conjecture that this polynomial exponent is optimal.

For strongly convex objectives with condition number $\kappa$, constant-step GD reduces the squared distance to the minimizer by a factor $0<\delta\le1/4$ in $O(\kappa\log(1/\delta))$ iterations, whereas Nesterov's accelerated method achieves $O(\sqrt\kappa\log(1/\delta))$~\citep{Nesterov1983}.
The latter matches the classical $\Omega(\sqrt\kappa\log(1/\delta))$ first-order oracle lower bound~\citep{NemirovskyYudin1983}.
\citet{Nesterov2004} gives a comprehensive treatment of these classical results.
For GD with predetermined stepsizes, \citet{altschuler2025hedging} introduced a silver schedule for strongly convex objectives that achieves $O(\kappa^{1/\psil}\log(1/\delta))$ iterations and conjectured its optimality.
Here $1/\psil\approx0.78644$.
The classical oracle lower bound $\Omega(\sqrt\kappa\log(1/\delta))$, however, leaves a polynomial gap to the silver upper bound.
Hence, the central question is whether we can close this gap by proving a stronger lower bound.

\subsection{Our Contributions}
In this paper, we prove an iteration lower bound of $\Omega(\kappa^{1/\psil-o(1)}\log(1/\delta))$ for GD with predetermined nonnegative stepsizes.
This matches the silver exponent in the upper bound of \citet{altschuler2025hedging}.

To state our results, let $\Fclass_{\mu,L}(\R^d)$ denote the class of differentiable $\mu$-strongly convex functions on $\R^d$ with $L$-Lipschitz gradients.
For each $F$ in this class, write $x_*$ for its unique minimizer.
For a normalized\footref{fn:normalization} schedule $H$, which may depend on $\kappa$ and the horizon $n$, the (worst-case) relative squared-distance and function-value errors are
\begin{align}
 \Derr_{n,\kappa}(H)
 &:=\sup_{d\in\N}\ \sup_{F\in\Fclass_{1/\kappa,1}(\R^d)}\ \sup_{x_0\ne x_*}
 \frac{\|x_n-x_*\|^2}{\|x_0-x_*\|^2},\label{eq:distance-risk}\\*
 \Eerr_{n,\kappa}(H)
 &:=\sup_{d\in\N}\ \sup_{F\in\Fclass_{1/\kappa,1}(\R^d)}\ \sup_{x_0\ne x_*}
 \frac{F(x_n)-F(x_*)}{F(x_0)-F(x_*)}.\notag
\end{align}

Both errors satisfy the following lower bound.
\begin{theorem}[Error lower bound]\label{thm:main}
There are absolute constants $C_1,C_2,\kappa_0>0$ such that, for every $\kappa\ge\kappa_0$, every integer $n\ge1$, and every predetermined nonnegative schedule $H\in[0,\infty)^n$,
\begin{equation}\label{eq:main}
 \min\{\Derr_{n,\kappa}(H),\Eerr_{n,\kappa}(H)\}
 \ge \exp\!\left[-C_1 n\,
       \kappa^{-1/\psil}\log\kappa\,e^{C_2\sqrt{\log\kappa}}\right].
\end{equation}
\end{theorem}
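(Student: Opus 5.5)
We plan to reduce the strongly convex lower bound to a block-wise argument built on our earlier convex lower bound \citep{ye2026silver}. The target is a contraction statement. Split the schedule $H$ into consecutive blocks. For each block we exhibit a worst-case function and starting point on which the distance (and function gap) contracts by at most a constant factor, unless the block is long. Concretely, the key quantity is a block length $m_\kappa\approx\kappa^{1/\psil}e^{-C\sqrt{\log\kappa}}/\log\kappa$, and we aim to show the following: for \emph{any} stretch of $m\le m_\kappa$ consecutive steps, there is a function in $\Fclass_{1/\kappa,1}$ and a starting point $y$ such that the $m$ steps reduce $\|y-x_*\|^2$ by at most a factor $e^{-c}$. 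Chaining $\lceil n/m_\kappa\rceil$ blocks then gives the exponent $-C_1 n\kappa^{-1/\psil}\log\kappa\,e^{C_2\sqrt{\log\kappa}}$.

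The chaining must be done inside a single function. We will use the standard device that $\sup_d$ allows orthogonal direct sums. For separable constructions $F(x)=\sum_i f_i(x^{(i)})$ with each $f_i\in\Fclass_{1/\kappa,1}$, the GD iterates evolve independently per coordinate block. However, composing the per-block bad instances over time is not separable, so a cleaner route is needed. We would instead lift to the performance-estimation (PEP) dual: the worst-case ratio over $n$ steps is $\ge$ the value of any feasible interpolable set of gradients and iterates. The plan is to build a single interpolating data set by concatenation. At block boundaries we set the iterate so that the function is locally the quadratic $\frac{1}{2\kappa}\|x-x_*\|^2$, which is the "neutral" state. Inside each block we use the hard convex instance from \citet{ye2026silver}, rescaled and shifted and then made $1/\kappa$-strongly convex by the standard transformation $f\mapsto f+\frac{1}{2\kappa}\|\cdot\|^2$. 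That transformation maps $\Fclass_{0,1-1/\kappa}$ to $\Fclass_{1/\kappa,1}$ and changes stepsizes only by a factor $(1-1/\kappa)$ per step, which is harmless over $m_\kappa$ steps. Interpolability of the concatenation (the Taylor--Hendrickx--Glineur conditions for $\Fclass_{\mu,L}$) must be checked across blocks.

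The per-block estimate is where the silver exponent enters. The convex result says that after $m$ steps the function gap relative to $\|x_0-x_*\|^2$ is at least $m^{-\psil-o(1)}$. We need to convert this into "the distance does not shrink much over a horizon where strong convexity is negligible." Strong convexity only kicks in once the stepsize-weighted progress $\sum h_t/\kappa$ becomes order one. The silver convex lower bound essentially forces $\sum_{t\in\text{block}} h_t\lesssim m^{\psil+o(1)}$ for schedules that are not catastrophically bad. If a block instead contains huge steps, a separate one-dimensional quadratic witness with curvature $\approx 1$ shows divergence or no contraction. The $o(1)$ in $m^{-\psil-o(1)}$ from the prior work, which there appears as $e^{O(\sqrt{\log n})}$, produces the $e^{C_2\sqrt{\log\kappa}}$ factor after substituting $m\approx\kappa^{1/\psil}$. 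The $\log\kappa$ factor comes from needing a constant per-block contraction slack uniformly.

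The function-value error then follows from the distance bound via $\frac{1}{2\kappa}\|x-x_*\|^2\le F(x)-F_*\le\frac12\|x-x_*\|^2$. Rather than paying a factor $\kappa$, which would only be absorbable since the right side of \eqref{eq:main} is at most $1$ and $\log\kappa$ sits in the exponent, we would run the same construction with an initial point chosen in the quadratic neutral state. The main obstacle is the gluing step, namely producing one globally interpolable instance in which each block's bad behavior persists and the $\frac{1}{2\kappa}$-regularization does not destroy the convex hard instance's tightness. I expect this to require reopening the certificate structure of \citet{ye2026silver} rather than using it as a black box.
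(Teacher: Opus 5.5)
Your plan leaves open the step that carries the whole proof, namely one function on which the per-block losses compound, and the routes you sketch do not supply it.
\begin{itemize}
\item Lower bounds do not concatenate. Worst-case errors are submultiplicative, $\Derr_{n_1+n_2,\kappa}((H_1,H_2))\le\Derr_{n_1,\kappa}(H_1)\,\Derr_{n_2,\kappa}(H_2)$, which is the wrong direction for you.
\item An orthogonal direct sum runs every block's instance over all $n$ steps at once and passes nothing between blocks.
\item A ``neutral'' quadratic state at block boundaries gives no mechanism for handing the remaining distance to a fresh hard instance. The $\Fclass_{\mu,L}$ interpolation conditions across blocks also remain unverified.
\item The per-block estimate cannot come from the convex theorem as a black box. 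That theorem bounds one function value on one instance, not a state the next block can consume.
\item Your dichotomy (block mass $\lesssim m^{\psil+o(1)}$, or else a one-dimensional quadratic witness) fails. A block may carry arbitrary mass, and quadratic witnesses cannot beat $\sqrt\kappa$: Chebyshev-type blocks of length $\gg\sqrt\kappa$ contract every quadratic.
\end{itemize}
The paper's mechanism is a single chain $F(x)=\frac1{2\kappa}\|x\|^2+\frac{1-1/\kappa}2\sum_i\Phi_i(x^{(i)},x^{(i+1)})$. Activation thresholds $\ell_i$ switch the components on in turn, and each \emph{checkpoint} step passes amplitude from coordinate $i$ to coordinate $i+1$. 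Large steps need no special treatment: they become checkpoints, and the local lemmas allow $b_i\ge\kappa$. Blocks only organize the checkpoint choice, and gaps may cross block boundaries. What is carried between blocks is the invariant $w<S_0$, $\ell/D\le R$ (\cref{prop:fixed}). Per block, retention $\ge1/8$ follows from $P_w(T;H)\ge P_w(\varnothing;H)=1$ (\cref{lem:mixed-block}). The silver exponent enters through the combinatorial bounds of \cref{lem:fixed-c-block} on gap masses and unselected tails, e.g.\ $\sum_i d_i\le C_b(w+am^p)$, which must be $\ll\kappa$.

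You also misjudge the strong-convexity term. Adding $\frac1{2\kappa}\|x\|^2$ is not a stepsize rescaling: each update becomes $x\mapsto(1-h_t/\kappa)x-h_t\nabla f(x)$, and at a checkpoint with $b_i\ge\kappa$ the factor is even nonpositive. This contraction works against the activation thresholds. After a gap, the excess above threshold is $D_i\eta_i$ with $\eta_i=\chi_i-(1-\chi_i)\ell_i/D_i$. That excess is exhausted once $\ell_i/D_i$ is large, or once unselected mass piles up over several blocks without a checkpoint. Controlling this needs ingredients your plan lacks:
\begin{itemize}
\item Huber repair components at one or two added checkpoints, which restore $\ell/D\le R$ at cost $c_0(n_j+1)^{-4}$ (\cref{lem:repair});
\item bridge components to resume bending after a repair;
\item the counting bound of \cref{lem:repair-cost}.
\end{itemize}
This repair cost, not a uniform per-block slack, is where the $\log\kappa$ factor comes from.

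Finally, chaining $\lceil n/m\rceil$ blocks at constant loss gives only $\exp[-c(1+n/m)]$. For $n\ll m$ this misses \eqref{eq:main}, whose right side is then close to $1$. For the same reason, the factor $\kappa$ in $\Eerr\ge\Derr/\kappa$ is \emph{not} absorbable, contrary to your remark. The paper removes both defects by repetition: $\exp[-C(1+Kn/m)\log(m+1)]\le\Derr_{Kn,\kappa}(H^{[K]})\le\kappa\,\Eerr_{n,\kappa}(H)^K$, then letting $K\to\infty$ (\cref{lem:repetition}).
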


Define the iteration complexities for relative squared distance and relative function error by
\begin{align*}
 T_{\Derr}(\kappa,\delta)
 &:=\inf\{n\in\N_{\ge1}:\ \exists H\in[0,\infty)^n\text{ with }
                   \Derr_{n,\kappa}(H)\le\delta\},\\
 T_{\Eerr}(\kappa,\delta)
 &:=\inf\{n\in\N_{\ge1}:\ \exists H\in[0,\infty)^n\text{ with }
                   \Eerr_{n,\kappa}(H)\le\delta\}.
\end{align*}
\begin{corollary}[Iteration complexity]\label{cor:complexity}
There are absolute $c,C>0$ such that, for sufficiently large $\kappa$ and every $0<\delta<1$,
\begin{equation}\label{eq:main-complexity}
 T_{\Derr}(\kappa,\delta),\ T_{\Eerr}(\kappa,\delta)
 \ge c\,\frac{\kappa^{1/\psil}}{e^{C\sqrt{\log\kappa}}\log\kappa}
                \log\frac1\delta.
\end{equation}
\end{corollary}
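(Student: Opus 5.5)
The corollary should follow directly from Theorem~\ref{thm:main} by inverting the exponential bound. Fix $\kappa\ge\kappa_0$ and $0<\delta<1$. Let $n$ be any admissible horizon, so some schedule $H\in[0,\infty)^n$ has $\Derr_{n,\kappa}(H)\le\delta$; the $\Eerr$ case is identical. Since the left side of \eqref{eq:main} is a minimum, \eqref{eq:main} gives
\[
\delta\ \ge\ \Derr_{n,\kappa}(H)\ \ge\ \min\{\Derr_{n,\kappa}(H),\Eerr_{n,\kappa}(H)\}\ \ge\ \exp\!\left[-C_1 n\,\kappa^{-1/\psil}\log\kappa\,e^{C_2\sqrt{\log\kappa}}\right].
\]
Taking logarithms gives $\log(1/\delta)\le C_1 n\,\kappa^{-1/\psil}\log\kappa\,e^{C_2\sqrt{\log\kappa}}$. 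Rearranging yields
\[
n\ \ge\ \frac{1}{C_1}\,\frac{\kappa^{1/\psil}}{e^{C_2\sqrt{\log\kappa}}\log\kappa}\log\frac1\delta .
\]
This holds for every admissible $n$, so it also holds for the infimum $T_{\Derr}(\kappa,\delta)$. Here $\log\kappa>0$ as long as $\kappa_0>1$, which we may assume by enlarging $\kappa_0$.

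If no admissible $n$ exists, the infimum is $+\infty$ and the bound holds trivially. We therefore take $c=1/C_1$ and $C=C_2$, and "sufficiently large $\kappa$" means $\kappa\ge\max\{\kappa_0,e\}$.

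There is no real obstacle here: the corollary is only the inversion of Theorem~\ref{thm:main}. The one thing to check is that the theorem applies to \emph{every} $n\ge1$ and \emph{every} schedule, which is what allows passing to the infimum, and it does. The constant $\log(1/\delta)$ appears with no loss because the theorem's bound is exactly exponential in $n$.
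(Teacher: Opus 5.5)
Your proposal is correct and matches the paper's argument: the paper also obtains the corollary from \cref{thm:main} by assuming one of the worst-case errors is at most $\delta$, taking logarithms, and rearranging, which gives $c=1/C_1$ and $C=C_2$. Your extra remarks (the bound holds for every admissible $n$, and the infimum is $+\infty$ if no horizon is admissible) are harmless completions of that one-line step.
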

For $0<\delta\le1/4$, the $\Omega(\kappa^{1/\psil-o(1)}\log(1/\delta))$ iteration lower bound in \cref{cor:complexity} matches the $O(\kappa^{1/\psil}\log(1/\delta))$ upper bound of \citet{altschuler2025hedging} up to a subpolynomial factor in $\kappa$.\footnote{The gap to the silver upper bound is the factor $e^{C\sqrt{\log\kappa}}\log\kappa=\kappa^{o(1)}$ in \eqref{eq:main-complexity}.}
The lower and upper bounds contain the same multiplicative factor $\log(1/\delta)$.
\Cref{tab:complexity-comparison} compares our iteration lower bound with the best known asymptotic upper and lower bounds across different objective and algorithm classes.

\begin{table}[!htbp]
\centering
\small
\setlength{\tabcolsep}{5pt}
\renewcommand{\arraystretch}{1.1}
\newcommand{\boundcell}[2]{%
  \begin{tabular}[c]{@{}c@{}}#1\\[2pt]{\footnotesize #2}\end{tabular}}
\begin{tabular*}{\textwidth}{@{\extracolsep{\fill}}llcc@{}}
\toprule
Objective class & Algorithm class & Lower bound & Upper bound \\
\midrule
General & Constant-step GD
  & \boundcell{$\Omega\!\left(\kappa\log(1/\delta)\right)$}{\citep{Polyak1963}}
  & \boundcell{$O\!\left(\kappa\log(1/\delta)\right)$}{\citep{Polyak1963}} \\
\addlinespace[7pt]
Quadratic & Predetermined-step GD
  & \boundcell{$\Omega\!\left(\sqrt\kappa\log(1/\delta)\right)$}{\citep{Young1953}}
  & \boundcell{$O\!\left(\sqrt\kappa\log(1/\delta)\right)$}{\citep{Young1953}} \\
\addlinespace[7pt]
General & Deterministic first-order methods
  & \boundcell{$\Omega\!\left(\sqrt\kappa\log(1/\delta)\right)$}{\citep{NemirovskyYudin1983}}
  & \boundcell{$O\!\left(\sqrt\kappa\log(1/\delta)\right)$}{\citep{Nesterov1983}} \\
\addlinespace[7pt]
General & Predetermined-step GD
  & \boundcell{{\color{curvepurple}$\displaystyle\Omega\!\left(\frac{\kappa^{1/\psil}\log(1/\delta)}{e^{C\sqrt{\log\kappa}}\log\kappa}\right)$}}{\color{curvepurple}\textbf{This work}}
  & \boundcell{$O\!\left(\kappa^{1/\psil}\log(1/\delta)\right)\vphantom{\displaystyle\Omega\!\left(\frac{\kappa^{1/\psil}\log(1/\delta)}{e^{C\sqrt{\log\kappa}}\log\kappa}\right)}$}{\citep{altschuler2025hedging}} \\
\bottomrule
\end{tabular*}
\caption[Iteration complexity on smooth strongly convex objectives.]{Iteration complexity on smooth strongly convex objectives.\protect\footnotemark}
\label{tab:complexity-comparison}
\end{table}
\footnotetext{All bounds hold for sufficiently large $\kappa$ and concern reducing squared distance to the minimizer by a factor $0<\delta\le1/4$.}

\subsection{Additional Related Work}\label{sec:related-work}

\paragraph{The special case of quadratic optimization.}
\citet{Young1953} achieved the optimal $O(\sqrt\kappa\log(1/\delta))$ complexity by choosing stepsizes as reciprocals of the roots of a shifted and scaled Chebyshev polynomial.
Conjugate gradients use a three-term recurrence, with coefficients that adapt to the particular problem instance~\citep{HestenesStiefel1952}.
Polyak's heavy-ball method adds momentum to GD and, with suitable fixed parameters, attains the accelerated asymptotic distance factor $(\sqrt\kappa-1)/(\sqrt\kappa+1)$ on strongly convex quadratics~\citep{Polyak1964}.
A three-term momentum recurrence with suitably chosen iteration-dependent coefficients can also generate the normalized Chebyshev polynomials.

Another approach uses the arcsine distribution, the limiting distribution of Chebyshev roots, to design stepsize schedules~\citep{PronzatoZhigljavsky2011,ZhigljavskyPronzatoBukina2013,Kalousek2017}. 
Once the stepsizes are chosen, their order does not affect GD's final iterate on quadratic objectives.
However, the order still affects numerical stability, motivating the study of stable Chebyshev orderings~\citep{LebedevFinogenov1971,AgarwalGoelZhang2021}.
For general nonquadratic objectives, the order can affect convergence itself.

\paragraph{General smooth (strongly) convex optimization.}
Beyond quadratics, the first provable gains from predetermined stepsize schedules were established in the strongly convex setting by \citet[Chapter~8]{Altschuler2018}, who analyzed two- and three-step schedules.
Repeating these short schedules improves constants in the classical $O(\kappa\log(1/\delta))$ iteration complexity.
Tools for analyzing and designing finite-horizon schedules include the performance estimation framework~\citep{DroriTeboulle2014}, smooth strongly convex interpolation~\citep{TaylorHendrickxGlineur2017}, and numerical branch-and-bound search~\citep{DasGuptaVanParysRyu2024}.
The remaining challenge was to improve the exponent of $\kappa$.

The silver stepsize schedule of \citet{altschuler2025hedging} improves this complexity from $O(\kappa\log(1/\delta))$ to $O(\kappa^{1/\psil}\log(1/\delta))$.
They conjectured that the exponent $1/\psil$ is optimal
among all predetermined stepsize schedules.
For separable strongly convex objectives, \citet{AltschulerParrilo2024RandomStepsizes} showed that arcsine randomization achieves the asymptotic distance factor $(\sqrt\kappa-1)/(\sqrt\kappa+1)$ almost surely.
We refer to \citet{AltschulerParrilo2026Survey} for a broader overview.

In the general smooth convex setting, early work studied periodic long steps~\citep{Grimmer2024}.
A breakthrough came when \citet{AltschulerParrilo2025} introduced the silver stepsize schedule, achieving the first $O(n^{-\psil})$ convergence rate for function-value error in gradient descent.\footnote{On strongly convex objectives, repeating a suitably chosen silver block of length $O(\kappa^{1/\psil})$ gives a constant-factor contraction in both squared distance and function error per block, yielding $O(\kappa^{1/\psil}\log(1/\delta))$ iterations for $0<\delta\le1/4$; see \citet[Theorem~1 and Corollary~2]{AltschulerParrilo2025}.}
Using Altschuler and Parrilo's recursive gluing technique, \citet{GrimmerShuWang2025ObjectiveGradient} subsequently improved the constant in the function-value bound and extended the silver rate to squared gradient norm.
For $n=2^k-1$, \citet{WangMaYangZhou2026} determined the exact worst-case function-value bound for the original silver schedule.
\citet{GrimmerShuWang2025Composing} and \citet{ZhangJiang2024Concatenation} independently gave rules for combining shorter schedules to obtain the silver rate for arbitrary prescribed horizons.
Under these symmetric composition rules, \citet{LiuChenJiangWang2026} proved that recursively joining schedules whose lengths differ by at most one minimizes the worst-case function-value bound at each horizon.
Related developments include anytime acceleration~\citep{ZhangLeeDuChen2025} and accelerated proximal gradient methods~\citep{BokAltschuler2025}.

\paragraph{Lower bounds.}
\leavevmode
Beyond the classical oracle lower bounds, later work has studied specific algorithm classes and exact bounds for a prescribed number of oracle calls.
Building on the framework of \citet{ArjevaniShalevShwartzShamir2016}, \citet{ArjevaniShamir2016} studied lower bounds for linear iterative methods with predetermined coefficients, including momentum methods.
For deterministic first-order methods, \citet{Drori2017} established an exact worst-case lower bound on function-value error for smooth convex minimization.
In the smooth strongly convex setting, \citet{DroriTaylor2022} established an exact worst-case lower bound on distance to the minimizer.
The method of \citet{TaylorDrori2023} attains this bound using auxiliary iterates.

The exact oracle bounds also apply to GD, but they do not determine its optimal rate under predetermined stepsizes.
For smooth convex objectives, \citet{GrimmerShuWang2025Composing} proved that the exponent $\psil$ is optimal for function-value error within their recursively generated class of schedules.
More recent lower bounds apply to arbitrary predetermined nonnegative schedules.
\citet{MaChen2026} proved an $\Omega(n^{-1.9319})$ function-value lower bound for predetermined nonnegative stepsizes, giving the first polynomial improvement over the classical $\Omega(n^{-2})$ bound.
\citet{Tsai2026NonAnytime} sharpened this to $\Omega(n^{-\sqrt3})$ using the same construction.
\citet{YeLiu2026} improved this lower bound to $\Omega(n^{-1.6342})$, even for possibly negative stepsizes.
\citet{jung2026stronger} subsequently introduced a family of hard functions that gives an $\Omega(n^{-\log_2(1+\sqrt3)})$ lower bound for positive schedules.
Our previous work~\citep{ye2026silver} further refined this construction to obtain an $\Omega(n^{-\psil-o(1)})$ lower bound.
This shows that the polynomial exponent $\psil$ in the $O(n^{-\psil})$ upper bound is optimal.
\Cref{sec:existing-techniques} reviews these construction ideas, which form the starting point for our strongly convex hard function.

\section{Overview of Proof Idea}\label{sec:overview}
In \Cref{sec:existing-techniques}, we first recall three ingredients from the smooth convex setting, introducing the notation that will also describe our strongly convex construction.
In \Cref{sec:sc-ingredients}, we explain the new ingredients needed for proving \cref{thm:main}.

\subsection{A Review of Existing Techniques for the Smooth Convex Case}\label{sec:existing-techniques}
\paragraph{The Checkpoint Method \citep{MaChen2026}.}
Given a stepsize schedule $H=(h_1,\ldots,h_n)$, we select a set of update indices $T=\{t_1<\cdots<t_k\}$ called \emph{checkpoints}. The sequence of updates between successive checkpoints forms a \emph{gap}. For each chosen checkpoint set, the construction gives a hard function and an associated \emph{score}; larger scores yield stronger lower bounds on the final error.
To obtain the best possible lower bound, the checkpoint method seeks to maximize this score over all possible checkpoint sets $T$.

To make this maximization tractable, these hard functions are constructed so that the score depends solely on the stepsize at each checkpoint and the sum of the stepsizes within each gap. To formalize this, let $t_0=0$ and $t_{k+1}=n+1$, and define\begin{equation*}b_i:=h_{t_i}\quad(1\le i\le k),\qquad s_i:=\sum_{t=t_{i-1}+1}^{t_i-1}h_t\quad(1\le i\le k+1).\end{equation*}Here, $b_i$ is the stepsize at checkpoint $t_i$, and $s_i$ is the \textit{mass} (the sum of the stepsizes) of the preceding gap, with $s_{k+1}$ representing the mass after the final checkpoint.

As an example, \citet{MaChen2026} construct a smooth convex hard function designed to keep the gradient constant within each gap. Consequently, the displacements within a gap simply accumulate and depend only on the gap mass $s_i$. Each checkpoint $b_i$ is then used to force GD to explore a new dimension. This strategy is reminiscent of Nesterov's lower-bound construction \citep[Appendix~A]{YeLiu2026}: to sustain a strong lower bound, GD cannot be allowed to make continuous progress in the same dimension throughout the run.
When identifying a checkpoint set that maximizes the score, this intuition also provides a natural heuristic: updates with larger stepsizes are strong candidates. If an update features a large stepsize while the gradient continues in the same direction, it allows GD to travel too far along that dimension, decreasing the function value too rapidly.

However, while this heuristic is conceptually straightforward, rigorously identifying the checkpoint set that maximizes the score is a highly nontrivial combinatorial problem. \citet{YeLiu2026} give a finer analysis of the factors coupling consecutive checkpoints in the same construction, improving the lower bound from $\Omega(n^{-1.9319})$ to $\Omega(n^{-1.6342})$.

\paragraph{Transfer via Local Hard Functions \citep{jung2026stronger}.}
The score depends on both the checkpoint set and the hard-function construction. Improving the construction can therefore strengthen the lower bound beyond what checkpoint selection alone achieves.

For instance, the hard function in \citet{MaChen2026} is constructed globally, which introduces complex dependencies between neighboring gaps into the resulting score. \citet{jung2026stronger} instead propose a decoupled construction that yields a much simpler score by summing two-dimensional local hard components and a final one-dimensional term:
\begin{equation*}
 F(x):=\frac12\sum_{i=1}^k\Phi_i(x^{(i)},x^{(i+1)})
       +\frac12H_{\delta_{\mathrm{term}}}(x^{(k+1)}-\ell_{k+1}),\qquad x_0=e_1,\qquad x_*=0.
\end{equation*}
We call each $\Phi_i:\mathbb R^2\to\mathbb R$ a \emph{local hard function}, or simply a \emph{component} of $F$. It is nonnegative, convex, and $1$-smooth, and acts on the adjacent coordinates $(X,Y)=(x^{(i)},x^{(i+1)})$.
We call $X$ and $Y$ the \textit{input} and \textit{output} coordinates, respectively. For $i<k$, $Y$ serves as the input coordinate of $\Phi_{i+1}$; for $i=k$, it is the coordinate on which the final one-dimensional term acts.
The key idea is to make these components active successively, where a component is \emph{active} when its gradient is nonzero.
During gap $i$, $\Phi_i$ is the current active component: all later components have zero gradient, and earlier components do not alter the gradient supplied by $\Phi_i$.
Each checkpoint activates the next local hard function, or the final one-dimensional term at the last checkpoint.

To fulfill these requirements, each $\Phi_i$ is chosen to vanish when $X\le\ell_i$ and $Y\ge0$, where the \emph{activation threshold} $\ell_i\ge0$ is introduced to keep $\Phi_i$ inactive until its input reaches the required level. At the beginning of gap $i$,
\begin{equation}\label{eq:overview-amplitude}
 (x_{t_{i-1}}^{(i)},x_{t_{i-1}}^{(i+1)})=(\ell_i+D_i,0),
\end{equation}
where $D_i$ is the \emph{incoming amplitude}, the amount by which the input exceeds its threshold; the next coordinate is initially zero. Initially, $\ell_1=0$ and $D_1=1$.
During this gap, the construction keeps $x^{(i+1)}\le\ell_{i+1}$, while all further coordinates remain zero. Thus, the next component's input stays at most its activation threshold, so it remains inactive and cannot interfere with $\Phi_i$. The same holds for all later components.
At checkpoint $t_i$, coordinate $i+1$ rises to $\ell_{i+1}+D_{i+1}$, crossing the threshold. For $i<k$, this activates $\Phi_{i+1}$ for the next gap.
For $i>1$, the preceding component $\Phi_{i-1}$ shares coordinate $i$ with $\Phi_i$. The construction also ensures that the contribution of $\Phi_{i-1}$ to coordinate $i$ does not alter the prescribed gradient of $\Phi_i$ during the gap or at the checkpoint query. Since no earlier component depends on coordinate $i+1$, these conditions let us determine the increase of coordinate $i+1$ from $\Phi_i$ alone.
The \emph{transfer ratio}
\begin{equation}\label{eq:overview-transfer-ratio}
 G_i:=\frac{D_{i+1}}{D_i}
\end{equation}
measures the amplitude passed to the next gap.

For the one-sided Huber components of \citet{jung2026stronger}, the construction gives $G_i=b_i/[2(2+s_i)]$, which depends only on the checkpoint stepsize $b_i$ and the preceding gap mass $s_i$.
The final term $H_{\delta_{\mathrm{term}}}$ is a one-sided Huber function, with parameter $\delta_{\mathrm{term}}=D_{k+1}/(1+s_{k+1})$, which turns the remaining amplitude in coordinate $k+1$ into a function-value lower bound, yielding
\begin{equation*}
 \frac{F(x_n)-F(x_*)}{\|x_0-x_*\|^2}
 \ge\frac{1}{4(1+s_{k+1})}\prod_{i=1}^kG_i^2.
\end{equation*}
Ultimately, this structural decoupling simplifies the identification of the maximizing checkpoint set and yields the improved $\Omega(n^{-\log_2(1+\sqrt3)})$ lower bound.

\paragraph{Bending the Local Trajectory \citep{ye2026silver}.}
The constructions of \citet{MaChen2026} and \citet{jung2026stronger} keep the gradient constant during each gap. In contrast, \citet{ye2026silver} change its direction to limit the increase of the output coordinate before the checkpoint while retaining a large increase at the checkpoint update. This improves the coefficient of the gap mass in the transfer bound from $2$ toward $1$, yielding the lower bound $\Omega(n^{-\psil-o(1)})$.

A bending transfer prescribes the updates of both $X=x^{(i)}$ and $Y=x^{(i+1)}$. When $i>1$, the preceding component $\Phi_{i-1}$ must therefore contribute zero to the update of $X$.
We require $\partial_2\Phi_i(X,Y)=0$ whenever $Y\ge\ell_{i+1}$, independently of $X$, where $\partial_2$ denotes the derivative in the output coordinate. During the next gap, the shared coordinate $x^{(i+1)}$ stays above $\ell_{i+1}$, so $\Phi_i$ contributes no gradient to the coordinates used by $\Phi_{i+1}$.
Note that $\Phi_i$ may still have a nonzero derivative in its first coordinate, and full inactivity is unnecessary.

\subsection{Additional Ingredients for the Strongly Convex Case}\label{sec:sc-ingredients}

For the strongly convex case, we rescale the sum of local hard functions from \cref{sec:existing-techniques} and add a quadratic term:
\begin{equation}\label{eq:global-function}
 F(x):=\frac1{2\kappa}\|x\|^2+
       \frac{1-1/\kappa}{2}\sum_{i=1}^k\Phi_i(x^{(i)},x^{(i+1)}),
 \qquad x_0=e_1,\quad x_*=0.
\end{equation}
Here $\kappa$ is the condition number, and the $k$ local hard functions $\Phi_i:\R^2\to\R$ are nonnegative, convex, $1$-smooth, and vanish at zero. Each coordinate appears in at most two of the $\Phi_i$, so the coefficient in \eqref{eq:global-function} makes $F$ $1$-smooth and $1/\kappa$-strongly convex, with minimizer $x_*=0$. The initial point $e_1$ is the first coordinate vector. We retain the thresholds $\ell_i$, amplitudes $D_i$, and transfer ratios $G_i=D_{i+1}/D_i$ from \cref{sec:existing-techniques}. A lower bound on $|x_n^{(k+1)}|$ yields a distance lower bound and, through the quadratic term, a function-value lower bound.

\paragraph{Threshold-to-Amplitude Ratio and Quadratic Decay.}
The \textit{threshold-to-amplitude ratio} $\ell_i/D_i$ compares the activation threshold $\ell_i$ with the input's excess $D_i$ above it immediately after the preceding checkpoint. This ratio determines how much quadratic decay the next transfer can tolerate. Over gap $i$, quadratic contraction alone multiplies the input by $\chi_i$ and leaves excess $D_i\eta_i$ above its fixed threshold, where
\begin{equation}\label{eq:overview-contraction}
 \chi_i:=\prod_{t_{i-1}<t<t_i}\left(1-\frac{h_t}{\kappa}\right),\qquad
 \chi_i(\ell_i+D_i)-\ell_i
 =D_i\Bigl[\underbrace{\chi_i-(1-\chi_i)\frac{\ell_i}{D_i}}_{:=\eta_i}\Bigr].
\end{equation}
As this excess shrinks, the threshold becomes larger relative to the available amplitude. When $\ell_i/D_i$ is large, even a small amount of contraction can exhaust the excess needed for the next transfer. We therefore need to control quadratic decay and, when necessary, introduce checkpoints that restore a bounded outgoing threshold-to-amplitude ratio.

\paragraph{Controlling Decay by Blocks and Additional Checkpoints.}
To control the quadratic decay, we divide the schedule into blocks of at most $m$ updates and choose checkpoints block by block. Writing $s_i$ for the total stepsize in gap $i$, the estimate $\chi_i\ge 1-s_i/\kappa$ for $s_i<\kappa$ suggests keeping the unselected gap mass small relative to $\kappa$. Heuristically, \citet{ye2026silver} gives a gap-mass scale of $m^{\psil+o(1)}$. Therefore, quadratic decay is controlled when this mass is small relative to $\kappa$, suggesting the choice of $m\lesssim\kappa^{1/\psil-o(1)}$. With the decay controlled in each block, we aim to keep the product of transfer ratios bounded below by an absolute constant, detailed in \Cref{sec:mixed-block}. Over all blocks with nonempty maximizing checkpoint sets, these bounds contribute a factor of at least $\exp[-O(1+n/m)]$ to the retained amplitude. To obtain a better lower bound, we therefore want to choose $m$ as large as possible, saturating the aforementioned constraint.

However, a block boundary does not end a gap. If the optimal checkpoint selection leaves several blocks without checkpoints, their unselected mass continues accumulating across blocks.
If a block has no nonempty maximizing checkpoint set and the accumulated unselected mass reaches the repair threshold, we perform a repair by selecting one or two additional checkpoints. Thus, at most one repair is performed per block.
The Huber repair components in \cref{sec:local-huber} restore a bounded threshold-to-amplitude ratio, at a controlled loss of amplitude.
\cref{sec:checkpoint-repair} details the precise checkpoint choice and transfer ratio. As this penalty is incurred at most once per block, \cref{lem:repair-cost} allows us to control the overall loss of the incoming amplitude across the entire schedule.

\paragraph{Bridging from Repairing to Bending.}
After repairing the threshold-to-amplitude ratio, we want to resume bending to retain its sharper dependence on $\kappa$. However, a Huber repair component may still contribute to the coordinate that serves as the next component's input, whereas bending requires control of both coordinate updates. We need a component that tolerates this earlier contribution during its own gap and prevents its own output derivative from affecting the following bending component. The Huber bridge supplies this compatibility, allowing repairs and bending to coexist in one fixed hard function. Its construction is given in \cref{sec:local-bridge}.

\begin{figure}[tb]
\centering
\begingroup
\definecolor{archblue}{HTML}{345D9D}
\definecolor{archorange}{HTML}{BF620C}
\definecolor{archink}{HTML}{242731}
\definecolor{archgray}{HTML}{717783}
\definecolor{archlight}{HTML}{CAD0D7}
\newcommand{\archcheckpoint}[3]{%
  \draw[#3,line width=.8pt] (#1,{#2-.13})--(#1,{#2+.17});%
  \filldraw[fill=#3,draw=white,line width=.35pt] (#1,#2) circle (2.1pt);%
}
\begin{tikzpicture}[x=1cm,y=1cm,>=Latex,font=\footnotesize,
  text=archink,inner sep=1pt,
  component/.style={draw,rounded corners=1.5pt,minimum width=16mm,
                    minimum height=6mm,align=center},
  case arrow/.style={->,archgray,line width=.65pt}]
\path[use as bounding box] (0,0) rectangle (16.2,9.65);

\node[anchor=west] at (0,9.48) {(a)\quad Scan the blocks from left to right};
\foreach \i/\name in {0/{block 1},1/{$\cdots$},2/{block $j-1$},
                         3/{block $j$},4/{block $j+1$},5/{$\cdots$},6/{last block}}{
  \pgfmathsetmacro{\leftedge}{.75+2.1*\i}
  \pgfmathsetmacro{\rightedge}{\leftedge+2.1}
  \filldraw[fill=archlight!12,draw=archlight,line width=.45pt]
    (\leftedge,8.55) rectangle (\rightedge,9.05);
  \node at ({(\leftedge+\rightedge)/2},8.84) {\name};
  \foreach \t in {0,...,7}{
    \fill[archgray] ({\leftedge+.18+.248*\t},8.65) circle (.65pt);
  }
}
\draw[archgray,line width=.8pt] (7.05,8.55) rectangle (9.15,9.05);
\draw[case arrow] (7.05,9.20)--(9.15,9.20);
\draw[archgray,line width=.4pt] (.75,8.44)--(.75,8.34)--(2.85,8.34)--(2.85,8.44);
\node[text=archgray,font=\scriptsize] at (1.8,8.13) {$m$ updates};
\node[text=archgray,font=\scriptsize] at (8.1,8.33) {current block};

\draw[archgray,line width=.65pt] (8.1,8.14)--(8.1,7.84);
\draw[case arrow] (8.1,7.84)--(4,7.84)--(4,7.40);
\draw[case arrow] (8.1,7.84)--(12.2,7.84)--(12.2,7.40);
\node[anchor=west] at (0,7.79) {(b)\quad Zoom into block $j$};
\node[text=archblue] at (4,7.18) {Nonempty maximizing set};
\node[text=archorange] at (12.2,7.18) {Empty maximizing set};
\draw[archlight,dashed,line width=.4pt] (8.1,6.95)--(8.1,1.10);

\foreach \shift in {0,8.2}{
  \begin{scope}[xshift=\shift cm]
    \fill[archlight!15] (2.15,6.00) rectangle (7.2,6.83);
    \node[text=archgray,font=\scriptsize] at (4.675,6.62) {block $j$};
    \node[text=archgray,font=\scriptsize,align=center] at (1.475,6.62)
      {incoming\\unselected tail};
    \foreach \x in {2.15,7.2}{
      \draw[archlight,dashed,line width=.4pt] (\x,5.98)--(\x,6.88);
    }
    \draw[archlight,line width=.5pt] (.8,6.20)--(7.2,6.20);
    \foreach \i in {0,...,28}{\fill[archlight] ({.8+.225*\i},6.20) circle (.8pt);}
    \archcheckpoint{.8}{6.20}{archink}
    \node[font=\scriptsize] at (.8,5.88) {already chosen};
  \end{scope}
}
\foreach \x in {2.375,4.175,6.875}{\archcheckpoint{\x}{6.20}{archblue}}
\node[text=archblue,align=center] at (4,5.58) {Keep the selected checkpoints};
\node[text=archorange,align=center] at (12.2,5.58) {Repair when needed};
\draw[->,archblue,line width=.65pt] (4,5.35)--(4,4.75);
\draw[->,archorange,line width=.65pt] (12.2,5.35)--(12.2,4.75);

\node[anchor=west] at (0,4.35) {(c)\quad Resulting checkpoints and local hard functions};
\foreach \shift/\shade in {0/archblue,8.2/archorange}{
  \begin{scope}[xshift=\shift cm]
    \fill[\shade!6] (2.15,3.48) rectangle (7.2,3.98);
    \draw[archlight,line width=.5pt] (.8,3.70)--(7.2,3.70);
    \foreach \i in {0,...,28}{\fill[archlight] ({.8+.225*\i},3.70) circle (.8pt);}
    \archcheckpoint{.8}{3.70}{archink}
    \foreach \x in {2.15,7.2}{
      \draw[archlight,dashed,line width=.4pt] (\x,3.45)--(\x,4.03);
    }
  \end{scope}
}

\foreach \x/\name/\id in {2.375/Bridging/bridge,4.175/Bending/bendone,6.875/Bending/bendtwo}{
  \archcheckpoint{\x}{3.70}{archblue}
  \draw[archblue!45,densely dotted,line width=.5pt] (\x,3.48)--(\x,2.65);
  \node[component,draw=archblue,fill=archblue!5,text=archblue] (\id) at (\x,2.35) {\name};
}
\node[anchor=east,text=archgray] (previousbend) at ($(bridge.west)+(-.38,0)$) {$\cdots$};
\draw[case arrow] (previousbend.east)--(bridge.west);
\draw[case arrow] (bridge.east)--(bendone.west);
\draw[case arrow] (bendone.east)--(bendtwo.west);
\node[text=archblue,font=\scriptsize,align=center] at (2.375,1.71) {first selected\\checkpoint};
\node[text=archblue,font=\scriptsize,align=center] at (5.525,1.71) {remaining selected\\checkpoints};

\foreach \x/\id in {9.675/repairone,13.725/repairtwo}{
  \archcheckpoint{\x}{3.70}{archorange}
  \draw[archorange!45,densely dotted,line width=.5pt] (\x,3.48)--(\x,2.65);
  \node[component,draw=archorange,fill=archorange!5,text=archorange] (\id) at (\x,2.35) {Repairing};
}
\node[anchor=east,text=archgray] (previousrepair) at ($(repairone.west)+(-.38,0)$) {$\cdots$};
\draw[case arrow] (previousrepair.east)--(repairone.west);
\draw[case arrow] (repairone.east)--(repairtwo.west);
\node[text=archorange,font=\scriptsize] at (11.7,1.71) {added checkpoints};
\end{tikzpicture}
\endgroup
\caption{An overview of checkpoint selection and the construction of hard function in one block. Black marks the latest previously chosen checkpoint; blue marks a nonempty maximizing set. For an empty maximizing set, the block's steps remain unselected if the accumulated unselected mass is below the repair threshold; otherwise, repair adds one or two orange checkpoints. Repair checkpoints may lie in the incoming tail. The two branches show alternative outcomes, and two repair checkpoints are illustrated. The unselected tail is carried over to the next block.}
\label{fig:architecture}
\end{figure}

\paragraph{Organization.}
The rest of this paper is organized as follows.
\Cref{sec:local} gives the three types of local hard functions and bounds their transfer ratios and outgoing threshold-to-amplitude ratios.
\Cref{sec:checkpoints} selects the checkpoints and details the repairing of threshold-to-amplitude ratio by adding checkpoints.
\Cref{sec:main-proof} assembles one hard function and proves the main theorem.
\Cref{fig:architecture} gives an overview of the checkpoint selection procedure in \cref{sec:checkpoints} and the construction of the hard function in \cref{sec:main-proof}.

\section{Local Hard Functions: Bending, Repairing, and Bridging}
\label{sec:local}

In this section, we present three types of local hard functions on adjacent coordinates used in \eqref{eq:global-function}. We first recall the local transfer setup from \cref{sec:existing-techniques} and fix the notation used below.

\paragraph{Notation and setup.}
We construct the components in checkpoint order, starting from $(\ell_1,D_1)=(0,1)$. Each construction takes the incoming pair $(\ell_i,D_i)$, chooses $\Phi_i$ and the next threshold $\ell_{i+1}$, and determines $D_{i+1}$ from the value of $x^{(i+1)}$ after the checkpoint update. For the current component $\Phi_i$, write $(X,Y)=(x^{(i)},x^{(i+1)})$. Within this gap, $j$ indexes the local updates and $h_j$ denotes their stepsizes.
Fix an input threshold $\ell_i\ge0$, an incoming amplitude $D_i>0$, and gap stepsizes $0\le h_j<\kappa$ for $1\le j\le m_{\mathrm{gap}}$.
Starting from $(X_0,Y_0)=(\ell_i+D_i,0)$, let $(X_j,Y_j)$ be the state after $j$ updates.
The gap is followed by a checkpoint update with stepsize $h_{m_{\mathrm{gap}}+1}=b_i>0$, with gradient evaluated at the checkpoint query $(X_{m_{\mathrm{gap}}},Y_{m_{\mathrm{gap}}})$.
We construct the current component $\Phi_i$ and choose an outgoing threshold $\ell_{i+1}\ge0$ so that
\begin{equation}\label{eq:local-transfer-states}
 \begin{gathered}
 X_j\ge\ell_i,\qquad 0\le Y_j\le\ell_{i+1}
       \quad(0\le j\le m_{\mathrm{gap}}),\\
 Y_{m_{\mathrm{gap}}+1}=\ell_{i+1}+D_{i+1},\qquad D_{i+1}>0.
 \end{gathered}
\end{equation}
For $i<k$, the resulting pair $(\ell_{i+1},D_{i+1})$ serves as the incoming data for constructing $\Phi_{i+1}$, with activation threshold $\ell_{i+1}$. As explained after \eqref{eq:overview-amplitude}, $Y_j\le\ell_{i+1}$ gives $\nabla\Phi_{i+1}(Y_j,0)=0$. All subsequent components are evaluated at $(0,0)$ and also have zero gradient, so none affects the checkpoint gradient. The checkpoint update raises $Y$ to $\ell_{i+1}+D_{i+1}$, activating $\Phi_{i+1}$. For $i=k$, this update instead sets the value of the final coordinate $x^{(k+1)}$.
We first describe the local updates and quantify the loss caused by quadratic contraction during each gap.

\paragraph{Local updates with the preceding component.}
With later components inactive, the updates of $(X,Y)$ come from $\Phi_i$, the quadratic term, and possibly the preceding component $\Phi_{i-1}$ when $i>1$. The latter depends on $X=x^{(i)}$, its second coordinate, but not on $Y=x^{(i+1)}$. If its contribution to the $X$ update is zero, \eqref{eq:global-function} gives
\begin{equation}\label{eq:local-dynamics}
 \binom{X_j}{Y_j}
 =\left(1-\frac{h_j}{\kappa}\right)\binom{X_{j-1}}{Y_{j-1}}
 -\frac{(1-1/\kappa)h_j}{2}\nabla\Phi_i(X_{j-1},Y_{j-1}).
\end{equation}
More generally, $\Phi_{i-1}$ contributes $h_jf_j$ to the update of $X$, where $f_j=-(1-1/\kappa)\partial_2\Phi_{i-1}/2\ge0$ and the derivative is evaluated at the query before update $j$. Our local analysis allows arbitrary $f_j\ge0$ in these updates:
\begin{equation}\label{eq:local-forced-dynamics}
 \binom{X_j}{Y_j}
 =\left(1-\frac{h_j}{\kappa}\right)\binom{X_{j-1}}{Y_{j-1}}
 -\frac{(1-1/\kappa)h_j}{2}\nabla\Phi_i(X_{j-1},Y_{j-1})
 +h_j\binom{f_j}{0}.
\end{equation}
Both recurrences run over $1\le j\le m_{\mathrm{gap}}+1$.
For the first component $\Phi_1$, $f_j=0$.
If $\partial_2\Phi_{i-1}=0$ whenever its second coordinate $X\ge\ell_i$, then $X_{j-1}\ge\ell_i$ makes $f_j=0$.

To bound the loss from quadratic contraction, recall $\chi_i$ and $\eta_i$ from \eqref{eq:overview-contraction}. If $s_i<\kappa$, then
\begin{equation}\label{eq:local-gap-estimates}
 1-\frac{s_i}{\kappa}\le\chi_i\le1,\qquad
 \eta_i\ge1-\frac{s_i}{\kappa}(1+\ell_i/D_i).
\end{equation}
For an empty gap, $s_i=0$ and $\chi_i=\eta_i=1$. Each local lemma below assumes $\eta_i>0$.

We now construct the three types of local hard functions. For each $\Phi_i$, our goal is to bound $D_{i+1}/D_i$, the transfer ratio in \eqref{eq:overview-transfer-ratio}, from below to retain amplitude, and $\ell_{i+1}/D_{i+1}$ from above to limit quadratic contraction loss in the next gap, as quantified in \eqref{eq:local-gap-estimates}.

\begin{figure}[H]
\centering
\resizebox{\linewidth}{!}{\input{figures/local-gradient-sets}}
\stepcounter{footnote}
\caption[Scaled gradients of $\Phi_i$ and example GD trajectories]{Scaled gradients of $\Phi_i$ (top) and example GD trajectories (bottom). For visual clarity, the trajectories omit contributions from the quadratic term and preceding component $\Phi_{i-1}$.
Orange marks checkpoints. Gray indicates $\Phi_i=0$; green shading indicates $\partial_2\Phi_i=0$.}
\label{fig:local-gradient-sets}
\end{figure}

\subsection{Bending}
\label{sec:bendin-local}

We adapt the bending idea of \citet{ye2026silver} reviewed in \cref{sec:existing-techniques}, accounting for quadratic contraction.
Along the constructed gap trajectory, the scaled gradient turns toward the negative $Y$-direction along a circular arc centered at $(\beta,\beta)$.
The construction gives a gap-mass coefficient $c_\beta$ and an additive parameter $a_\beta\ge8$ for the denominator in the transfer bound \eqref{eq:bending-local-bound} below, satisfying 
\begin{equation}\label{eq:bending-parameter-growth}
 1<c_\beta<2,\qquad c_\beta=1+O(\beta),\qquad
 \log a_\beta=O(1/\beta),
\end{equation}
uniformly when $0<\beta\le1/4$.

\begin{lemma}[Bending]\label{lem:bending-local}
Let $\kappa\ge4$, $0<\beta\le1/4$, $s_i/\kappa\le1/16$, and $\eta_i>0$. There exist a nonnegative convex $1$-smooth local hard function $\Phi_i$ and a threshold $\ell_{i+1}\ge0$, independent of the checkpoint stepsize $b_i$, such that $\partial_2\Phi_i\le0$ everywhere and
\begin{equation*}
 \Phi_i(X,Y)=0\quad(X\le\ell_i, Y\ge0),\qquad
 \partial_2\Phi_i(X,Y)=0\quad(X\in\R,Y\ge\ell_{i+1}).
\end{equation*}
For every $b_i>8$, the iterations \eqref{eq:local-dynamics} satisfy \eqref{eq:local-transfer-states}, and
\begin{equation}\label{eq:bending-local-bound}
 \frac{D_{i+1}}{D_i}\ge\frac{b_i-8}{c_\beta(a_\beta+s_i+8)}\,\eta_i(1-s_i/\kappa),\qquad \frac{\ell_{i+1}}{D_{i+1}}\le4\frac{s_i+2}{b_i-8}.
\end{equation}
\end{lemma}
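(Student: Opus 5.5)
The plan is to build $\Phi_i$ explicitly as the support-function-type construction of \citet{ye2026silver}, shifted by the threshold $\ell_i$, and then track the gap trajectory in two stages. First we do the computation with the quadratic contraction switched off. Then we absorb the factor $(1-h_j/\kappa)$ multiplicatively.

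Concretely, we would define $\Phi_i$ as a 1-smooth convex function whose gradient field is prescribed on a curve. This is done by taking the Moreau envelope (or the smooth interpolation of \citet{TaylorHendrickxGlineur2017}) of a piecewise-linear or arc-indexed family of supporting planes. The scaled gradient $-\tfrac12\nabla\Phi_i$ starts at $(-1,0)$-type direction, i.e.\ it pushes $X$ down at unit rate. Along the gap it rotates on the circular arc centered at $(\beta,\beta)$ toward the negative-$Y$ direction, so that accumulated gap mass $s$ produces only $Y$-growth of order $c_\beta s+a_\beta$, rather than $2s$.

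The gradient pieces are indexed by the traveled mass, not by the individual $h_j$. This lets us fix $\ell_{i+1}$ as the maximal $Y$ reached along the curve, a quantity of order $D_i(s_i+2)$ scaled by the amplitude. It is then independent of $b_i$. We impose $\partial_2\Phi_i=0$ for $Y\ge\ell_{i+1}$ by making every supporting plane used there horizontal in $Y$; the sign condition $\partial_2\Phi_i\le0$ comes from the arc lying in the closed quadrant. Vanishing for $X\le\ell_i$, $Y\ge0$ follows because all supporting planes are nonpositive on that quadrant and we take the positive part.

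Next we verify \eqref{eq:local-transfer-states} under \eqref{eq:local-dynamics}. We write $X_j=\chi^{(j)}X_0-\text{(drift)}$ and show by induction that the iterate stays in the region where $\nabla\Phi_i$ equals the prescribed value. Contraction only moves the point toward the origin. Since the prescribed gradient is constant on suitable cones or strips, it stays valid provided the excess $X_j-\ell_i$ remains positive. That is exactly what $\eta_i>0$ and \eqref{eq:local-gap-estimates} give, and it yields $X_j\ge\ell_i$.

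At the checkpoint the gradient has turned to roughly the pure $(0,-1)$-direction with magnitude proportional to the remaining amplitude $\eta_iD_i$. Hence $Y$ jumps by about $b_i\eta_iD_i/(c_\beta(a_\beta+s_i+8))$ minus a loss of at most $8$ times that scale, coming from the bounded portion of $\nabla\Phi_i$ not aligned with $Y$. The extra factor $(1-s_i/\kappa)$ accounts for contraction of $Y$ during the gap and for the $(1-1/\kappa)$ prefactor. Here $s_i/\kappa\le1/16$ and $\kappa\ge4$ keep these constants absorbed into the ``$-8$'' and ``$+8$''. Dividing $\ell_{i+1}\lesssim (s_i+2)D_i\eta_i$-scale by this jump gives $\ell_{i+1}/D_{i+1}\le4(s_i+2)/(b_i-8)$.

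The main obstacle will be the first stage with contraction present. In \citet{ye2026silver} the bending gradient is indexed by the exact position along the arc, but contraction perturbs the iterates off the designed curve. We therefore need $\Phi_i$ built so that its gradient depends on position only through a robust parametrization. Our first attempt would be to parametrize by the ratio $(X-\ell_i)/(\text{current scale})$, making the construction approximately homogeneous about $(\ell_i,0)$. We would then check that contraction toward $0$, rather than toward $(\ell_i,0)$, only reduces the excess, and is thus dominated by the $\eta_i$ bookkeeping. Verifying smoothness constant $1$ after this rescaling, via the arc radius bound $\log a_\beta=O(1/\beta)$ from \eqref{eq:bending-parameter-growth}, is the delicate calculation.
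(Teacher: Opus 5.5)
There is a genuine gap, and it sits exactly at the step you defer as the main obstacle.

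The verification you do write down is the constant-gradient mechanism of the repair and bridge components: the prescribed gradient is constant on cones or strips and stays valid while $X_j-\ell_i>0$. For bending, the gradient must move along the arc during the gap. Positivity of the excess determines neither which arc point $\Pi_K$ selects at each query, nor that after exactly $m_{\mathrm{gap}}$ updates the checkpoint query lies where $\nabla\Phi_i=\varrho(\beta,-1)$. The transfer computation needs the latter.

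Indexing the gradient by traveled mass is not available for a fixed function: once the gradient rotates, the position depends on the individual $h_j$. Contraction is also not absorbed by the $\eta_i$ bookkeeping. Relative to the arc, it is an affine pull toward the image of the origin, a point depending on $\ell_i/\varrho$ and the trajectory height, and this changes the projection. An ``approximately homogeneous'' reparametrization gives no exact control of \eqref{eq:local-transfer-states}.

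The missing idea is to fix the shape, $\Phi_i=(\env_1\sigma_{\varrho K})(X-\ell_i-\ell_{i+1}/\beta,\,Y-\ell_{i+1})$ with $\ell_{i+1}=\varrho y_0$. One then builds the trajectory \emph{backward} in the coordinates $r_j=[(X_j,Y_j)-(\ell_i+\ell_{i+1}/\beta,\ell_{i+1})]/\varrho$:
\begin{itemize}
\item start from $r_{m_{\mathrm{gap}}}=v_{m_{\mathrm{gap}}}=(\beta,-1)$;
\item invert each update, including its contraction;
\item choose $v_{j-1}$ on the arc by the collinearity rule \eqref{eq:bending-backward-rule};
\item certify $\Pi_K(r_{j-1})=v_{j-1}$ by the normal-cone condition, using $q_{j-1}\le q_j$ and $1\le y_j\le y_0$.
\end{itemize}
A monotone fixed-point argument then pins down $y_0$. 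The intermediate value theorem gives $\varrho$ so that $r_0$ corresponds to $(\ell_i+D_i,0)$. Since the envelope's gradient is the projection, forward GD reproduces this trajectory exactly.

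There is no delicate smoothness calculation: $1$-smoothness is free from \cref{fact:local-moreau}. You also cannot take $\log a_\beta=O(1/\beta)$ as an input, because producing $a_\beta$ is part of the lemma. It comes from the $s_i$-independent height bound $y_0+\beta\le\beta\exp[(\sqrt{2}(1+\beta)-\beta)/\beta]$, obtained by chaining \eqref{eq:bending-height-step} along the arc. It enters because $D_i$ must pay for the $X$-offset $\ell_{i+1}/\beta$ that makes $\Phi_i$ vanish on $\{X\le\ell_i,\,Y\ge0\}$. Likewise, $c_\beta$ is the largest $X$-component of the arc (its value at $q=0$). It bounds how fast $X$ is consumed, not how fast $Y$ grows.

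Your checkpoint accounting also fails concretely. Since $\partial_2\Phi_i=0$ on $\{Y\ge\ell_{i+1}\}$ and $\nabla\Phi_i$ is $1$-Lipschitz, we have $|\partial_2\Phi_i(X,Y)|\le\ell_{i+1}-Y$ for $Y<\ell_{i+1}$. Suppose $\ell_{i+1}$ is the largest $Y$ reached in the gap; for nondecreasing $Y$ this is attained at the checkpoint query. Then $\partial_2\Phi_i$ vanishes there and $D_{i+1}\le0$. The query must sit strictly below the threshold. The paper puts it at $Y_{m_{\mathrm{gap}}}=\ell_{i+1}-\varrho$ with $\partial_2\Phi_i=-\varrho$, so that
\[
 D_{i+1}=\frac{1-1/\kappa}{2}b_i\varrho-\frac{b_i}{\kappa}(\ell_{i+1}-\varrho)-\varrho .
\]
The final $-\varrho$ is what produces $b_i-8$. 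It is not due to a component of $\nabla\Phi_i$ ``not aligned with $Y$''; the $X$-component never enters the $Y$ update. The factor $1-s_i/\kappa$ comes from the contraction term at the checkpoint, via $\ell_{i+1}\le\varrho\bigl(1+\tfrac{1-1/\kappa}{2}s_i\bigr)$.

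Finally, $\ell_{i+1}/D_{i+1}\le4(s_i+2)/(b_i-8)$ follows only when both quantities are measured in units of $\varrho$. With your scales as written, $\ell_{i+1}\sim(s_i+2)D_i\eta_i$ against $D_{i+1}\sim b_i\eta_iD_i/[c_\beta(a_\beta+s_i+8)]$, the ratio would be off by a factor of order $a_\beta+s_i$.
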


For $Y\ge\ell_{i+1}$, the derivative $\partial_2\Phi_i(X,Y)$ vanishes for every $X$. Thus, $\Phi_i$ does not affect the update of $Y=x^{(i+1)}$ in this region. For $i<k$, this is the input coordinate of $\Phi_{i+1}$. The value $\Phi_i(X,Y)$ and derivative $\partial_1\Phi_i(X,Y)$ may still be nonzero.
Taking $\beta$ small makes the gap coefficient $c_\beta$ close to one, at the cost of the additive constant $a_\beta$.
The construction and proof are given in \cref{app:local-bending}.

\subsection{Repairing}
\label{sec:local-huber}

Using the bending componenet alone for checkpoint selection can leave several consecutive blocks without a checkpoint. We split such long gaps with additional checkpoints and use Huber repair components to control the threshold-to-amplitude ratio $\ell_{i+1}/D_{i+1}$. The construction uses the following one-sided Huber function of \citet{jung2026stronger}:
\[
 H_\delta(t)=\begin{cases}
 0,&t\le0,\\ t^2/2,&0\le t\le \delta,\\ \delta t-\delta^2/2,&t\ge \delta.
 \end{cases}
\]
We apply $H_\delta$ to the margin $X-Y-\ell_i$ and choose its scale so that every query through the checkpoint lies in the affine region, where the gradient of $\Phi_i$ is constant. The extra term $h_jf_j\ge0$ in the $X$ update helps keep the margin in this region.

\begin{lemma}[Repairing]\label{lem:huber-local}
Let $\kappa>1$ and $\eta_i>0$. Define
\begin{equation}\label{eq:huber-local-choice}
 \delta_{\mathrm H}=\frac{D_i\eta_i}{2+(\kappa-1)(1-\chi_i)},\qquad
 \Phi_i(X,Y)=\frac12H_{2\delta_{\mathrm H}}(X-Y-\ell_i).
\end{equation}
Set $\ell_{i+1}=(\kappa-1)(1-\chi_i)\delta_{\mathrm H}/2$.
$\Phi_i$ is nonnegative, convex, and $1$-smooth, with $\partial_2\Phi_i\le0$ everywhere and $\Phi_i(X,Y)=0$ whenever $X\le\ell_i$, $Y\ge0$.

For every $b_i>0$ and every sequence $f_j\ge0$, the iterations \eqref{eq:local-forced-dynamics} satisfy \eqref{eq:local-transfer-states}, and
\begin{equation}\label{eq:huber-local-output}
 \frac{D_{i+1}}{D_i}=\frac{(1-1/\kappa)b_i\chi_i\eta_i}{2[2+(\kappa-1)(1-\chi_i)]}\ge\frac{(1-1/\kappa)b_i\chi_i\eta_i}{2(s_i+2)},\qquad \frac{\ell_{i+1}}{D_{i+1}}=\frac{\kappa(1-\chi_i)}{b_i\chi_i}.
\end{equation}
Consequently, if $s_i<\kappa$,
then
\begin{equation}\label{eq:huber-ratio-mass}
 \frac{s_i}{b_i}\le\frac{\ell_{i+1}}{D_{i+1}}
 \le\frac{s_i/b_i}{1-s_i/\kappa}.
\end{equation}
\end{lemma}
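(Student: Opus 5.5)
The plan is to show that every query $(X_{j-1},Y_{j-1})$, for $1\le j\le m_{\mathrm{gap}}+1$, lies in the affine region of $H_{2\delta_{\mathrm H}}$, meaning the margin $M:=X-Y-\ell_i$ satisfies $M\ge2\delta_{\mathrm H}$. In that region $\nabla\Phi_i=\delta_{\mathrm H}(1,-1)$ is constant, and \eqref{eq:local-forced-dynamics} becomes two decoupled scalar linear recursions that can be solved in closed form.

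\emph{Static properties.} Nonnegativity and convexity of $\Phi_i$ follow from those of $H_{2\delta_{\mathrm H}}$, composed with the linear map $(X,Y)\mapsto X-Y-\ell_i$. Since $0\le H''\le1$, the Hessian of $\Phi_i$ is $\tfrac12H''\cdot(1,-1)^\top(1,-1)$, whose operator norm is at most $1$; so $\Phi_i$ is $1$-smooth. We have $\partial_2\Phi_i=-\tfrac12H'\le0$ because $H'\ge0$. If $X\le\ell_i$ and $Y\ge0$, the margin is $\le0$, so $\Phi_i=0$.

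\emph{Solving the $Y$-recursion.} Set $c:=(1-1/\kappa)\delta_{\mathrm H}/2$. Assuming the affine-region claim for the gap queries, the $Y$ update is $Y_j=(1-h_j/\kappa)Y_{j-1}+h_jc$ with $Y_0=0$. Its fixed point is $\kappa c$, so $Y_j=\kappa c(1-P_j)$, where $P_j:=\prod_{r\le j}(1-h_r/\kappa)$.
\begin{itemize}
\item Because $h_r<\kappa$, each $P_j\in(0,1]$ and $P_j$ is nonincreasing. Hence $0\le Y_j\le Y_{m_{\mathrm{gap}}}=\kappa c(1-\chi_i)=\ell_{i+1}$.
\item At the checkpoint, $Y_{m_{\mathrm{gap}}+1}-\ell_{i+1}=b_i(c-\ell_{i+1}/\kappa)=b_ic\chi_i$. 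This formula is linear in $b_i$, so no restriction $b_i<\kappa$ is needed.
\item Substituting $c$ and $\delta_{\mathrm H}$ gives the exact expressions in \eqref{eq:huber-local-output}. In particular $\ell_{i+1}/D_{i+1}=\kappa(1-\chi_i)/(b_i\chi_i)$, and $D_{i+1}>0$ since $\eta_i>0$.
\item The inequality in \eqref{eq:huber-local-output} uses $(\kappa-1)(1-\chi_i)\le\kappa(1-\chi_i)\le s_i$, which follows from $\chi_i\ge1-s_i/\kappa$. That bound holds for any $h_t<\kappa$ by the union-type inequality $\prod(1-a_t)\ge1-\sum a_t$.
\end{itemize}

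\emph{The main step: the margin.} Since $f_j\ge0$, induction gives $X_j\ge P_j(\ell_i+D_i)-\kappa c(1-P_j)$. Subtracting $Y_j$ and $\ell_i$ yields
\[
M_j\ge P_j(\ell_i+D_i+2\kappa c)-\ell_i-2\kappa c .
\]
The right-hand side is increasing in $P_j$, and $P_j\ge\chi_i$ for $j\le m_{\mathrm{gap}}$. So
\[
M_j\ge\chi_i(\ell_i+D_i)-\ell_i-(\kappa-1)(1-\chi_i)\delta_{\mathrm H}=D_i\eta_i-(\kappa-1)(1-\chi_i)\delta_{\mathrm H}=2\delta_{\mathrm H}.
\]
The last equality is exactly how $\delta_{\mathrm H}$ was chosen in \eqref{eq:huber-local-choice}.

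This argument must be run as a single joint induction on $j$, because the affine-region claim is used to derive the recursions that in turn establish it. The obstacle is bookkeeping rather than substance. The forcing term $h_jf_j$ only increases $X$, so it only helps, and $h_j<\kappa$ keeps all contraction factors positive. As a by-product, $M_j>0$ and $Y_j\ge0$ give $X_j\ge\ell_i$, which completes \eqref{eq:local-transfer-states}.

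\emph{The consequence \eqref{eq:huber-ratio-mass}.} Start from $\ell_{i+1}/D_{i+1}=\kappa(1-\chi_i)/(b_i\chi_i)$.
\begin{itemize}
\item For the upper bound, use $1-\chi_i\le s_i/\kappa$ and $\chi_i\ge1-s_i/\kappa>0$, which requires $s_i<\kappa$.
\item For the lower bound, use $\chi_i\le e^{-s_i/\kappa}$. This gives $(1-\chi_i)/\chi_i=1/\chi_i-1\ge e^{s_i/\kappa}-1\ge s_i/\kappa$.
\end{itemize}
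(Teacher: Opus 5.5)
Your proposal is correct and follows essentially the paper's route. You show that every query through the checkpoint has margin $X-Y-\ell_i\ge2\delta_{\mathrm H}$, which is exactly how $\delta_{\mathrm H}$ is chosen; you solve the resulting linear $Y$-recursion to get $Y_{m_{\mathrm{gap}}}=\ell_{i+1}$; and you read off $D_{i+1}=(1-1/\kappa)b_i\chi_i\delta_{\mathrm H}/2$ from the checkpoint update.

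The differences are cosmetic. The paper avoids your joint induction by using $0\le\partial_1\Phi_i=-\partial_2\Phi_i\le\delta_{\mathrm H}$ everywhere, so its margin recurrence holds unconditionally. It also obtains $\chi_i s_i\le\kappa(1-\chi_i)$ from the telescoping identity rather than from $1-x\le e^{-x}$.
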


For this Huber repair component, $\ell_{i+1}$ is the value of $Y$ just before the checkpoint update, while $D_{i+1}$ is the net increase in $Y$ produced by that update.
When $s_i$ is small relative to $\kappa$, \eqref{eq:huber-ratio-mass} shows that a checkpoint with $b_i$ comparable to $s_i$ gives a bounded threshold-to-amplitude ratio $\ell_{i+1}/D_{i+1}$.
The proof of the local guarantees is given in \cref{app:local-huber}.

One issue of the Huber repair component is that $\partial_2\Phi_i(X,Y)=0$ only when $Y\ge X-\ell_i$. Thus, even after $Y$ crosses $\ell_{i+1}$, $\Phi_i$ may still affect the update of $Y$, the input of $\Phi_{i+1}$. 
The construction in the next section resolves this issue.

\subsection{Bridging}
\label{sec:local-bridge}

After a Huber repair, the preceding component $\Phi_{i-1}$ may still add $h_jf_j\ge0$ to the update of $X$, whereas \cref{lem:bending-local} requires $f_j=0$. We therefore use a bridge $\Phi_i$ between repairing and bending. It allows $f_j\ge0$ during its own transfer and satisfies $\partial_2\Phi_i(X,Y)=0$ for $Y\ge\ell_{i+1}$. Thus, it does not affect the update of $Y=x^{(i+1)}$ during the next bending gap and at its checkpoint query.

\begin{lemma}[Bridging]\label{lem:bridge}
Let $\kappa\ge4$, $s_i/\kappa\le1/16$, and $\eta_i>0$. With $\delta_{\mathrm H}$ from \eqref{eq:huber-local-choice}, define
\begin{equation*}
 \ell_{i+1}=\frac{D_i\eta_i}{2},\qquad K_\triangle=\conv\{0,(\delta_{\mathrm H},-\delta_{\mathrm H}),(\delta_{\mathrm H},0)\},
\end{equation*}
and
\[
 \Phi_i(X,Y)=\max_{g\in K_\triangle}
 \left\{\big\langle g,(X-\ell_i-\ell_{i+1},Y-\ell_{i+1})\big\rangle-\frac12\|g\|^2\right\}.
\]
$\Phi_i$ is nonnegative, convex, and $1$-smooth, with $\partial_2\Phi_i\le0$ everywhere and $\Phi_i(X,Y)=0$ whenever $X\le\ell_i$, $Y\ge0$. Moreover, $\partial_2\Phi_i(X,Y)=0$ for all $X\in\R$, $Y\ge\ell_{i+1}$.

For every $b_i>4$ and every sequence $f_j\ge0$, the iterations \eqref{eq:local-forced-dynamics} satisfy \eqref{eq:local-transfer-states}, and
\begin{equation*}
 \frac{D_{i+1}}{D_i}\ge\frac14\eta_i\frac{b_i-4}{s_i+2},\qquad \frac{\ell_{i+1}}{D_{i+1}}\le2\frac{s_i+2}{b_i-4}.
\end{equation*}
\end{lemma}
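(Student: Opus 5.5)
The first part is structural. The plan is to read $\Phi_i$ as a Moreau-type envelope of the support function of the triangle $K_\triangle$. Writing $z=(X-\ell_i-\ell_{i+1},Y-\ell_{i+1})$, we have $\Phi_i=\tfrac12\|z\|^2-\tfrac12\operatorname{dist}(z,K_\triangle)^2$ and $\nabla\Phi_i(X,Y)=P_{K_\triangle}(z)$, the Euclidean projection.

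This gives the basic properties at once. Convexity holds because $\Phi_i$ is a supremum of affine functions. $1$-smoothness holds because projection onto a closed convex set is $1$-Lipschitz. Nonnegativity follows from $0\in K_\triangle$. Since every $g\in K_\triangle$ has $g_2\le0$, we get $\partial_2\Phi_i\le0$.

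For vanishing, $\Phi_i=0$ exactly when $P_{K_\triangle}(z)=0$, i.e.\ $z$ lies in the normal cone of $K_\triangle$ at $0$. The edges at $0$ point along $(1,-1)$ and $(1,0)$, so this cone is $\{z_1\le0,\ z_1\le z_2\}$. If $X\le\ell_i$ and $Y\ge0$, then $z_1\le-\ell_{i+1}\le z_2$ and $z_1\le0$, so $\Phi_i=0$.

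For $Y\ge\ell_{i+1}$ we have $z_2\ge0$. Every point of $K_\triangle$ satisfies $g_1\in[0,\delta_{\mathrm H}]$ and $g_2\le0$, and the top edge $[0,\delta_{\mathrm H}]\times\{0\}$ lies in $K_\triangle$. Hence the nearest point can be taken with $g_2=0$, so $\partial_2\Phi_i=0$ for every $X$.

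Next comes the dynamics. I will show that every query $j\le m_{\mathrm{gap}}+1$ lies in the normal cone at the vertex $(\delta_{\mathrm H},-\delta_{\mathrm H})$, so that the gradient is the constant $(\delta_{\mathrm H},-\delta_{\mathrm H})$. This normal cone is cut out by two conditions:
\[
Y\le\ell_{i+1}-\delta_{\mathrm H},\qquad X-Y-\ell_i\ge2\delta_{\mathrm H}.
\]
The second condition is exactly the affine-region condition of the Huber repair component $\tfrac12H_{2\delta_{\mathrm H}}(X-Y-\ell_i)$, whose gradient $(\delta_{\mathrm H},-\delta_{\mathrm H})$ there coincides with ours. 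So while the queries stay in this cone, the $(X,Y)$ trajectory is literally the trajectory of \cref{lem:huber-local} for the same $f_j\ge0$. The margin invariant $X_j-Y_j-\ell_i\ge2\delta_{\mathrm H}$, the bound $X_j\ge\ell_i$, and $Y_j\ge0$ can then be imported from that lemma, which was designed for exactly this purpose. The only genuinely new item is the first inequality, which by induction only has to hold up to the checkpoint query $j=m_{\mathrm{gap}}$, where we also need $Y_{m_{\mathrm{gap}}}\le\ell_{i+1}$.

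From the Huber analysis, $Y_{m_{\mathrm{gap}}}=(\kappa-1)(1-\chi_i)\delta_{\mathrm H}/2$ (this is that lemma's $\ell_{i+1}$). The required inequality therefore reads
\[
\tfrac12(\kappa-1)(1-\chi_i)\delta_{\mathrm H}+\delta_{\mathrm H}\le \tfrac12D_i\eta_i,
\]
which is an identity by the definition \eqref{eq:huber-local-choice} of $\delta_{\mathrm H}$. This explains the choice $\ell_{i+1}=D_i\eta_i/2$.

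At the checkpoint, $Y_{m_{\mathrm{gap}}+1}=(1-b_i/\kappa)Y_{m_{\mathrm{gap}}}+(1-1/\kappa)b_i\delta_{\mathrm H}/2$. We then lower bound $D_{i+1}=Y_{m_{\mathrm{gap}}+1}-D_i\eta_i/2$ using three estimates:
\begin{itemize}
\item $(\kappa-1)(1-\chi_i)\le s_i$, so that $\delta_{\mathrm H}\ge D_i\eta_i/(s_i+2)$;
\item $1-1/\kappa\ge3/4$;
\item $s_i\le\kappa/16$, which controls the possibly negative term $-(b_i/\kappa)Y_{m_{\mathrm{gap}}}\ge-b_is_i\delta_{\mathrm H}/(2\kappa)$.
\end{itemize}
Together these should give $D_{i+1}\ge\tfrac14 D_i\eta_i(b_i-4)/(s_i+2)$. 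The ratio bound then follows directly: $\ell_{i+1}/D_{i+1}\le\tfrac12D_i\eta_i\cdot4(s_i+2)/(D_i\eta_i(b_i-4))=2(s_i+2)/(b_i-4)$.

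I expect the main obstacle to be this final constant bookkeeping. The subtraction of $\ell_{i+1}=D_i\eta_i/2$ has to be absorbed by the "$-4$" in $b_i-4$. This works only if the constant in front of $b_i\delta_{\mathrm H}$ after the contraction loss is large enough, namely at least $\tfrac14\cdot$ the factor implicit in $\delta_{\mathrm H}$. It may also require using the more precise expression $2+(\kappa-1)(1-\chi_i)$ rather than $s_i+2$. If the crude bounds fail, I would track $\delta_{\mathrm H}$ and $Y_{m_{\mathrm{gap}}}$ exactly and use $s_i/\kappa\le1/16$ to sharpen $1-1/\kappa$ and $b_i/\kappa$ terms. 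A second point to check is whether the Huber lemma's invariants stay valid when $b_i\ge\kappa$ at the checkpoint. Only the query (not the post-checkpoint state) must lie in the vertex cone, so this should be harmless.
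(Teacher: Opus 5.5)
Your proposal is correct and follows essentially the same route as the paper: write $\Phi_i$ via projection onto $K_\triangle$, and characterize the vertex cone at $(\delta_{\mathrm H},-\delta_{\mathrm H})$ by $X-Y-\ell_i\ge2\delta_{\mathrm H}$ and $Y\le\ell_{i+1}-\delta_{\mathrm H}$; then import the repair trajectory from \cref{lem:huber-local} using the identity $\ell_{i+1}=Y_{m_{\mathrm{gap}}}+\delta_{\mathrm H}$, and finish with the checkpoint update. The constant bookkeeping you were worried about already closes with your crude bounds: $D_{i+1}=-\delta_{\mathrm H}+\tfrac{(1-1/\kappa)b_i}{2}\delta_{\mathrm H}-\tfrac{b_i}{\kappa}Y_{m_{\mathrm{gap}}}\ge\delta_{\mathrm H}\bigl(\tfrac{11}{32}b_i-1\bigr)\ge\tfrac14\delta_{\mathrm H}(b_i-4)$; the paper instead uses the exact form $\delta_{\mathrm H}\bigl[(1-1/\kappa)b_i\chi_i/2-1\bigr]$ together with $(1-1/\kappa)\chi_i\ge45/64$.
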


With the same incoming data, stepsizes, and sequence $f_j\ge0$, the bridge and the Huber repair component of \cref{sec:local-huber} have identical gradients through the checkpoint query and hence the same value of $Y$ after the checkpoint update. Unlike the repair component, whose derivative with respect to $Y$ vanishes only when $Y\ge X-\ell_i$, the bridge has $\partial_2\Phi_i(X,Y)=0$ for every $X$ whenever $Y\ge\ell_{i+1}$. In this region, it contributes nothing to the update of $Y$, the input coordinate of $\Phi_{i+1}$.
The proof is provided in \cref{app:local-bridge}.
\Cref{fig:local-gradient-sets} provides an illustration of the trajectories of these local constructions.

\section{Selecting Checkpoints for Every Schedule}
\label{sec:checkpoints}

Fix a positive integer $m$.
Divide the schedule $H=(h_1,\dots,h_n)$ into blocks of \textit{length} $m$, where the last block may contain fewer steps.
Note that a block's length counts steps, while its mass is the sum of their stepsizes. The block boundaries are fixed before selecting checkpoints. In contrast, a gap is determined by successive checkpoints and may cross several block boundaries.

We use a chain of bending components as the reference for checkpoint selection. In each block, we maximize the score in \eqref{eq:carried-score}, formed from the bending transfer bound of \cref{lem:bending-local}. The score omits the quadratic decay factors and the dependence on the incoming threshold-to-amplitude ratios; \cref{sec:mixed-block} verifies that the selected transfers remain feasible and retain a constant fraction of this score.
Under this score-maximizing selection, \Cref{lem:fixed-c-block} gives bounds on the unselected mass and on the relation between checkpoint stepsizes and adjacent gap masses. These bounds will be used to control quadratic decay and the threshold-to-amplitude ratios in \Cref{sec:main-proof}.

After examining a block, the steps after the latest checkpoint through the end of that block form the \emph{unselected tail}; if no checkpoint has been selected, this includes all steps examined so far. If the score maximizer selects no checkpoint, the current block extends this tail. Once its accumulated mass becomes too large, we perform a repair by selecting one or two additional checkpoints in this tail and using a Huber repair component from \cref{sec:local-huber} for each selected checkpoint.
This is described in \cref{sec:checkpoint-repair}.

The two selections serve different purposes. Maximizing the bending score preserves the favorable transfer ratios responsible for the exponent in \eqref{eq:checkpoint-exponents}.
A Huber repair restores a bounded threshold-to-amplitude ratio and leaves little unselected mass, at a controlled loss of amplitude.

\subsection{Choosing Checkpoints within a Block}\label{sec:fixed-c-block}

Fix $1\le c\le2$ and $a\ge8$. Define
\begin{equation}\label{eq:checkpoint-exponents}
 p=p(c):=\log_2(1+\sqrt{1+c}),\qquad \nu=1/p.
\end{equation}
Consider one fixed block of $r\le m$ steps, with stepsizes relabelled as $H=(h_1,\ldots,h_r)$. Let $w\ge0$ be the mass of the unselected tail from preceding blocks; initially $w=0$.

An admissible checkpoint set $T=\{t_1<\cdots<t_k\}\subseteq\{1,\ldots,r\}$ selects only steps with $b_i:=h_{t_i}>8$. For a nonempty checkpoint set, put
\begin{equation}
     s_1:=w+\sum_{j=1}^{t_1-1}h_j,\qquad
 s_i:=\sum_{j=t_{i-1}+1}^{t_i-1}h_j\quad(2\le i\le k),\qquad s_{\rm tail}:=\sum_{j=t_k+1}^r h_j.
 \label{eq:carried-gap-definitions}
\end{equation}
Note that the first gap starts immediately after the preceding checkpoint, even if that checkpoint is in an earlier block. Its mass $s_1$ therefore includes both $w$ and the stepsizes before the first checkpoint selected in the current block.
Define the checkpoint score
\begin{equation}\label{eq:carried-score}
 P_w(T;H):=\prod_{i=1}^k\frac{B_i}{d_i},\qquad B_i:=b_i-8,\qquad d_i:=c(a+s_i+8),\qquad
 P_w(\varnothing;H):=1.
\end{equation}
For the fixed block $H$ and fixed mass $w$, we maximize this score over the admissible checkpoint sets $T$. If a nonempty set attains the maximum, we choose one. Otherwise we choose $T=\varnothing$, selecting no checkpoint in this block. There are finitely many sets, so a maximizer exists and its score is at least one.

The next lemma gives the bounds needed to realize the maximizing checkpoint set using the local hard functions. Parts~(i)--(ii) control the unselected mass and the sum of the score denominators $\sum_i d_i$, hence the gap mass that causes quadratic decay. Parts~(iii)--(iv) bound the terms used to control contraction loss in the next gap; see \eqref{eq:ratios}.
\begin{lemma}[Bounds for a Maximizing Checkpoint Set]
\label{lem:fixed-c-block}
There is an absolute constant $C_b$ with the following properties, uniformly for $1\le c\le2$, $a\ge8$, $1\le r\le m$, and $w\ge0$.
Parts~(ii)--(iv) concern a nonempty maximizing checkpoint set $T=\{t_1<\cdots<t_k\}$.
\begin{enumerate}[label=(\roman*),leftmargin=*]
\item If the empty set maximizes \eqref{eq:carried-score}, then
\begin{equation*}
 w+\sum_{j=1}^r h_j\le C_b(w+a m^p).
\end{equation*}
\item If $T$ is a nonempty maximizer, then
\begin{equation*}
 \sum_{i=1}^k d_i\le C_b(w+a m^p),\qquad
 s_{\rm tail}\le C_b a m^p.
\end{equation*}
\item For two consecutive checkpoints, set
$x=a+s_i+8$, $y=a+s_{i+1}+8$, and $B=B_i$. Then
\begin{equation*}
 B(x+y+B-a)\ge cxy,\qquad \frac{xy}{B}\le2(x+y).
\end{equation*}
In particular,
\begin{equation*}
 \frac{d_i d_{i+1}}{B_i}\le4(d_i+d_{i+1}),\qquad
 \frac{s_{i+1}d_i}{B_i}\le2(d_i+d_{i+1}).
\end{equation*}
\item The last checkpoint satisfies $B_k\ge d_k$.
\end{enumerate}
\end{lemma}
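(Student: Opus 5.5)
Parts (iii) and (iv) are local exchange arguments. Parts (i) and (ii) need a global bound on masses, which I would get from a recursive "silver" construction.

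\emph{Parts (iii) and (iv).} For (iv), compare the maximizer $T$ with $T\setminus\{t_k\}$. Removing $t_k$ deletes the factor $B_k/d_k$ from the score and changes nothing else, since $s_{\rm tail}$ does not enter the score. Optimality of $T$ then gives $B_k/d_k\ge1$.

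For the first inequality of (iii), merge the two gaps around $t_i$ by deleting $t_i$. The merged gap has mass $s_i+b_i+s_{i+1}$, so its denominator is $c(a+s_i+b_i+s_{i+1}+8)=c(x+y+B-a)$. The factors $(B_i/d_i)(B_{i+1}/d_{i+1})$ are replaced by $B_{i+1}/[c(x+y+B-a)]$. Optimality then gives
\[
B(x+y+B-a)\ge c\,xy.
\]

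For the second inequality, use $x,y\ge a$ and $c\ge1$, and suppose $B<xy/(2(x+y))$. Then
\[
B(x+y+B-a)< \frac{xy}{2(x+y)}\Bigl(x+y+\frac{xy}{2(x+y)}\Bigr)\le xy\Bigl(\tfrac12+\tfrac18\Bigr)<cxy,
\]
using $xy/(x+y)^2\le 1/4$. This is a contradiction, so $xy/B\le 2(x+y)$. The "in particular" bounds follow by multiplying by $c\le2$ and using $s_{i+1}\le y$.

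\emph{Parts (i) and (ii).} The plan is an insertion argument: if some gap or tail has large mass, inserting extra checkpoints into it strictly increases the score. For this I would prove an auxiliary lemma, in the spirit of \citet{ye2026silver}.

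\emph{Auxiliary lemma.} For any string of $r\le m$ steps with total mass $S$ and any incoming mass $s$, there is an admissible checkpoint set inside the string whose local score, relative to leaving the string unselected, is at least a quantity that exceeds $1$ once $S\ge C_b(s+am^p)$. I would prove it by induction on dyadic scales. Pick the largest step $h^\ast$ in the string. If $h^\ast$ is comparable to $S$, select it alone; the factor is then roughly $S/(c(a+s+8))$. Otherwise the mass is spread out. Split the string at the largest step and recurse on the two halves, gluing sub-solutions with the recursion from (iii). That recursion, $B\approx cxy/(x+y)$ with balanced $x\approx y$, gives the growth $\text{mass}\sim(1+\sqrt{1+c})^{\text{depth}}$. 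Since there are at most $m$ steps, the depth is at most $\log_2 m$, so $S\lesssim a\,m^{\log_2(1+\sqrt{1+c})}=am^p$ unless a gain is available.

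\emph{Deriving (i) and (ii) from the lemma.} For (i), if the empty set is optimal, no insertion helps. The lemma then bounds $\sum_j h_j$, and adding $w$ gives the claim. For (ii), the tail bound follows in the same way: since $s_{\rm tail}$ does not enter the score, inserting checkpoints into the tail only multiplies in gains, so $s_{\rm tail}\le C_bam^p$. For $\sum_i d_i$, I would combine (iii) with the structure of the maximizer. Adjacent $d$'s are coupled by $d_id_{i+1}\le 4B_i(d_i+d_{i+1})$, and the $B_i$ are bounded by the block mass. To control the block mass itself, I would consider the alternative of leaving the block after $t_1$ entirely unselected, or applying the lemma gapwise to each $s_i$.

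\emph{Main obstacle.} I expect the main obstacle to be this auxiliary lemma with exponent exactly $p(c)$, uniform in $c\in[1,2]$ and $a\ge8$, and with the additive $w$ handled correctly. It essentially repackages the combinatorial core of \citet{ye2026silver}, and I would try to import that argument with $c$ as a parameter. The other delicate point is bounding $\sum_i d_i$ by the block mass, since gaps can be long. There I would rely on the fact that each $s_i$ ($i\ge2$) lies inside the block and is itself controlled by the lemma applied gapwise, with $s_1$ contributing $w$.
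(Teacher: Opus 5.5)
\textbf{Parts (iii) and (iv).} Your arguments are correct and are the paper's exchange arguments (delete $t_i$, respectively $t_k$). Your contradiction proof of $xy/B\le2(x+y)$ is a valid substitute for the paper's case split on whether $B\le x+y$.

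\textbf{The gap is in (i)--(ii).} The auxiliary lemma you defer is the entire content of these parts, and the sketch you give for it does not work.
\begin{enumerate}
\item Splitting at the largest step does not produce balanced pieces. For increasing stepsizes one piece is empty at every level, so the depth can be of order $m$. The bound $(1+\sqrt{1+c})^{\text{depth}}$ is then exponential in $m$, not $m^p$.
\item The single-step case is miscomputed. The selected step's denominator includes the mass of the string before it. A step of size $S/2$ preceded by mass $S/2$ has factor below $1/c\le1$, not about $S/(c(a+s+8))$.
\item Part (iii) is a necessary condition satisfied by a maximizer. It is not a device for gluing sub-solutions on arbitrary data.
\end{enumerate}

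\textbf{The missing idea} is a potential that does not care about the shape of the splitting tree. The paper uses $M=\sum_j(a+g_j)^\nu$, with $g_j=\min\{h_j,8\}$ and $w$ added to $g_1$. It combines this with the scalar splitting inequality: $W^\nu\le x^\nu+y^\nu$ whenever $W=x+y+B-a$ and $WB=cxy$. This is exactly where $p=\log_2(1+\sqrt{1+c})$ enters, via $z^p+(1-z)^p+c[z(1-z)]^p\le1$.

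To reach a configuration where that inequality applies, the paper does two things. It proves log-submodularity of the auxiliary cost $Q_\Lambda$. It then uses a perturbation argument to show that, at the worst choice of the selectable entries, the empty set is optimal and some singleton is tight. Induction on the two sides of that singleton gives $V_\Lambda\le c[\Lambda^\nu+\sum_j(a+g_j)^\nu]^p$.

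In (i) the large steps themselves count toward the mass, so one more estimate is needed. This is the level-set count $\#\{j:B_j>q\}\le c^\nu Mq^{-\nu}$, which integrates to $\sum_jB_j\le\frac{c}{1-\nu}M^p$.

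\textbf{The bound on $\sum_id_i$ in (ii).} The coupling $d_id_{i+1}\le4B_i(d_i+d_{i+1})$ does not help. The $B_i$ do not enter $\sum_id_i$, and bounding them by the block mass is circular. Your gapwise idea is the right one, but it needs two features.
\begin{enumerate}
\item Inserting checkpoints into gap $i$ changes the denominator $d_i$ of the retained checkpoint $t_i$. The per-gap statement must therefore be the auxiliary problem with terminal offset $\Lambda=a+8$. This gives $d_i\le cM_i^p$, where $M_i$ sums $(a+g_j)^\nu$ over the positions after $t_{i-1}$ up to and including $t_i$.
\item The bound must scale with the number of positions in that gap, not with $m$. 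A bound $s_i\lesssim am^p$ per gap, summed over $k$ gaps, loses a factor $k$.
\end{enumerate}
With $d_i\le cM_i^p$, superadditivity of $t\mapsto t^p$ gives $\sum_id_i\le c(\sum_iM_i)^p\le cM^p\lesssim w+am^p$. This is the paper's argument.
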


The full proof is provided in \cref{app:sequence}.

\subsection{Repairing Accumulated Unselected Steps}
\label{sec:checkpoint-repair}

The checkpoint set maximizing the score may be empty. We then retain the block as part of the unselected tail. Part~(i) of \cref{lem:fixed-c-block} bounds the total mass of the enlarged tail when this happens for one block, but several consecutive blocks with no selected checkpoint can still accumulate too much mass.
The next lemma selects $q\in\{1,2\}$ checkpoints in this tail to bound both the threshold-to-amplitude ratio $\ell_{q+1}/D_{q+1}$ after the last checkpoint and the remaining unselected mass.

\begin{lemma}[Selecting checkpoints for repair]
\label{lem:repair}
Fix $C_0\ge1$ and $R=32C_0$. There are constants $\varepsilon_0\in(0,1)$ and $c_0>0$, depending only on $C_0$, with the following property. Let $\kappa\ge4$, let $r\ge1$ be an integer, and let $h_1,\ldots,h_r\ge0$ satisfy
\[
 S:=\sum_{t=1}^r h_t,\qquad
 S_0\le S\le C_0S_0,\qquad
 1\le S_0\le\frac{\varepsilon_0\kappa}{C_0}.
\]
There exist $q\in\{1,2\}$ and checkpoint indices
$0=t_0<t_1<\cdots<t_q\le r$, with $b_i:=h_{t_i}>0$, having the following properties. For $1\le i\le q$, define the gap sum and contraction by
\begin{equation}\label{eq:repair-indexed-gaps}
 s_i:=\sum_{t=t_{i-1}+1}^{t_i-1}h_t,\qquad
 \chi_i:=\prod_{t=t_{i-1}+1}^{t_i-1}(1-h_t/\kappa).
\end{equation}
Let $\ell\ge0$, $D>0$, and $\ell/D\le R$. Starting from $(\ell_1,D_1)=(\ell,D)$, define successively
\begin{equation}\label{eq:repair-indexed-construction}
 \begin{aligned}
 \eta_i&:=\chi_i-(1-\chi_i)\frac{\ell_i}{D_i},
 &\delta_{\mathrm H,i}&:=\frac{D_i\eta_i}
 {2+(\kappa-1)(1-\chi_i)},\\
 \ell_{i+1}&:=\frac{\kappa-1}{2}\delta_{\mathrm H,i}(1-\chi_i),
 &D_{i+1}&:=\frac{1-1/\kappa}{2}b_i\delta_{\mathrm H,i}\chi_i.
 \end{aligned}
\end{equation}
Then:
\begin{enumerate}[label=(\roman*),leftmargin=*]
\item For every $1\le i\le q$, $\eta_i>0$ and $D_{i+1}>0$, and
\begin{equation}\label{eq:repair-amplitude-bound}
 \prod_{i=1}^q\frac{D_{i+1}}{D_i}
 =\frac{D_{q+1}}{D_1}\ge c_0(r+1)^{-4}.
\end{equation}
\item $\ell_{q+1}/D_{q+1}\le R$.
\item $\displaystyle\sum_{t=t_q+1}^r h_t\le S_0/4$.
\item Define
\begin{equation}\label{eq:repair-last-updates}
 j_*:=\max\left\{j\in\{1,\ldots,r\}:\sum_{t=j}^r h_t>S_0/8\right\},
\end{equation}
then $j_*\le t_1<\cdots<t_q\le r$.
\end{enumerate}
\end{lemma}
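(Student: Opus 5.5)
The plan is to build the repair from at most two Huber transfers. Each factor in \eqref{eq:repair-indexed-construction} will be controlled by the closed forms of \cref{lem:huber-local}:
\[
\frac{D_{i+1}}{D_i}\ge\frac{(1-1/\kappa)b_i\chi_i\eta_i}{2(s_i+2)},\qquad
\frac{\ell_{i+1}}{D_{i+1}}=\frac{\kappa(1-\chi_i)}{b_i\chi_i}\le\frac{s_i/b_i}{1-s_i/\kappa}.
\]
Every gap lies inside the tail, so $s_i\le S\le C_0S_0\le\varepsilon_0\kappa$. Hence $\chi_i\ge1-\varepsilon_0$, and by \eqref{eq:local-gap-estimates} we get $\eta_i\ge1-\varepsilon_0(1+\ell_i/D_i)$. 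For the first transfer the incoming ratio is at most $R$, so choosing $\varepsilon_0\le 1/(4(1+R))$ gives $\eta_1\ge3/4$. The whole difficulty is therefore to place the checkpoints so that three things hold: the last one has $b_q\gtrsim s_q$, giving (ii); every intermediate ratio $\ell_i/D_i$ times the next gap mass stays $\ll\kappa$, giving $\eta_i>0$; and each $b_i/(s_i+2)$ is at least a power of $1/(r+1)$.

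The first step is to fix the last checkpoint. The index $j_*$ in \eqref{eq:repair-last-updates} is well defined, because $\sum_{t=1}^rh_t=S\ge S_0>S_0/8$. By maximality, $\sum_{t=j_*+1}^r h_t\le S_0/8$, and $h_{j_*}>0$. I will take $t_q:=\arg\max_{j_*\le t\le r}h_t$ and write $b:=h_{t_q}$. Then (iii) and (iv) for $t_q$ are immediate, since everything after $t_q$ has mass at most $S_0/8$. Also $b\ge S_0/(8r)$, as at most $r$ steps carry mass more than $S_0/8$.

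If the mass strictly between $0$ and $t_q$ is at most $2b$, I take $q=1$. Then $s_1/b_1\le2$, which gives (ii) with room to spare. For (i), use $b_1/(s_1+2)\ge b/(2b+2)$, which is $\ge c\,r^{-1}$ because $b\ge S_0/(8r)\ge1/(8r)$.

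Otherwise I take $q=2$. Let $W$ be the shortest final segment of $\{1,\dots,t_q-1\}$ with mass $>b$. Put $t_1:=\arg\max_{t\in W}h_t$, so that $b_1\ge b/|W|\ge b/r$. The gap $s_2$ consists of part of $W$ minus its largest element, so $s_2\le 2b$. Two subcases arise. If the element that completes $W$ exceeds $b$, it is the maximizer, and then $s_2\le b$. In every case $\ell_3/D_3\le 2s_2/b\cdot(1-\varepsilon_0)^{-1}\le R$, and $t_1\ge j_*$ is not needed. Part (iv) requires $t_1\ge j_*$, though. I expect to secure it by noting that when $t_1<j_*$ the $q=1$ choice already works, after replacing the threshold $2b$ by one adapted to $S_0/8$. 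If that fails, I would instead restrict $W$ to $[j_*,t_q)$, using that this interval has mass $\ge S_0/8-b$.

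For the amplitude, $D_2/D_1\gtrsim b_1/(s_1+2)\gtrsim b/(r(C_0S_0+2))\gtrsim r^{-2}$, and $D_3/D_2\gtrsim b\eta_2/(2b+2)\gtrsim r^{-1}\eta_2$. Together these give the target $(r+1)^{-4}$ with one power to spare.

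The main obstacle is the middle ratio $\ell_2/D_2\le (s_1/b_1)/(1-\varepsilon_0)$, which can be as large as $C_0S_0 r/b$. Then
\[
\eta_2\ge1-\frac{s_2}{\kappa}\Bigl(1+\frac{\ell_2}{D_2}\Bigr)\approx1-\frac{2b\cdot C_0S_0r}{\kappa b},
\]
and $\varepsilon_0$ alone does not absorb the factor $r$. I would try to fix this by choosing $t_1$ as the maximizer of the genuine two-step quantity $\frac{b_1}{s_1+2}\cdot\frac{\eta_2}{s_2+2}$ over $t_1\in W$ with $s_2\le2b$, rather than the crude argmax. The point is to exploit the fact that $\eta_2$ only involves $s_2 s_1/(\kappa b_1)$. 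If $b_1$ is tiny relative to $s_2s_1/\kappa$, then moving $t_1$ to the index of a larger step within $W$ shrinks $s_2$ proportionally, which should keep $s_2/b_1$ bounded by $O(1)$ rather than $O(r)$. If this fails, the fallback is to accept $\eta_2\ge1/2$ only when $s_2/b_1\le 1/(4\varepsilon_0C_0)$. In the complementary case, steps in $W$ are all tiny compared with $s_2$, and I would argue that a single checkpoint at $t_q$ with the gap mass bounded via $b\ge$ (mass of $W$)$/2$ already yields the $q=1$ bounds.
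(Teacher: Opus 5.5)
There is a genuine gap, and it is the obstacle you identify yourself: for $q=2$ your choice of checkpoints does not keep $\eta_2$ positive, and none of the suggested repairs works.

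\textbf{Why the construction fails.} Taking $t_1$ to be the largest step of the shortest final segment $W$ before $t_q$ with mass $>b$ does give $s_2\le b=b_2$ (always, not merely $s_2\le 2b$), hence (ii). But it gives no control of $s_2/b_1$. Since $\chi_is_i\le\kappa(1-\chi_i)$ and $\chi_2\ge1/2$, your construction satisfies the genuine lower bound
\[
 1-\eta_2=(1-\chi_2)\Bigl(1+\frac{\ell_2}{D_2}\Bigr)\ge\frac{s_2}{2\kappa}\cdot\frac{s_1}{b_1}.
\]
Suppose $W$ consists of one step $2b/k$ followed by $k-1$ steps $b/k$. Then $b_1=2b/k$ and $s_2=(k-1)b/k$, so $1-\eta_2\ge(k-1)s_1/(4\kappa)$. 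With mass of order $S_0$ before $W$ and $S_0$ near $\varepsilon_0\kappa/C_0$, this exceeds $1$ once $k\gtrsim C_0/\varepsilon_0$, and $k$ can be as large as $r$. This configuration can occur with $t_1=j_*$ and every step of $\{j_*,\dots,r\}$ below $S_0/16$. So neither (iv) nor a size-based $q=1$ rule excludes it.

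\textbf{Why your fixes do not help.} First, $t_1$ is already the largest step of $W$, so there is no larger step in $W$ to move to, and moving left only enlarges $s_2$. Second, the fallback to $q=1$ at $t_q$ uses the gap $\sum_{t<t_q}h_t$. This includes the mass before $W$, which can be of order $S_0$ while $b$ is of order $S_0/r$, so $\ell_2/D_2\ge s_1/b$ is of order $r$. Third, anchoring $t_q$ at the largest step of $\{j_*,\dots,r\}$ is itself too rigid. If that step is $h_{j_*}$ and the rest of the suffix consists of many steps of half its size, then (iv) leaves no room for $t_1<t_q$, and $q=1$ fails (ii) for the same reason.

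\textbf{The missing idea.} You need a pair $j_*\le t_1<t_2\le r$ in the suffix, not necessarily containing its largest step, with
\[
 s_2\le\min\{b_1,b_2\},\qquad b_1,b_2\ge\frac{S'}{8r^2},\qquad S':=\sum_{t=j_*}^rh_t>\frac{S_0}{8}.
\]
The condition $s_2\le b_1$ is exactly what removes the factor $r$ you ran into:
\[
 1-\eta_2\le\frac{s_2}{\kappa}\Bigl(1+\frac{2s_1}{b_1}\Bigr)\le\frac{s_2+2s_1}{\kappa}\le\frac{2S}{\kappa}\le\frac14 .
\]
The condition $s_2\le b_2$ gives $\ell_3/D_3\le2s_2/b_2\le2$. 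Each transfer ratio is then at least $b_i/(64(1+C_0)S_0)\gtrsim r^{-2}$, which yields (i).

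The existence of such a pair is a genuinely combinatorial fact, the paper's \cref{lem:repair-pair}: either the pair exists, or some step of the suffix is at least $3S'/4$. It is proved by splitting at a maximal element and inducting on the length. Set $\tau=S'/(8r^2)$. The induction shows that a sequence violating the pair property, in which every entry $\ge\tau$ is smaller than the total mass on its left (respectively on both sides), has sum at most $2\tau k^2$ (respectively $\tau k^2$).

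The paper combines this with a $q=1$ rule triggered by a suffix step $b_1\ge S_0/16$, rather than by your test $s_1\le2b$. Then $\ell_2/D_2\le2s_1/b_1\le2C_0S_0/(S_0/16)=R$, which is where $R=32C_0$ comes from. If no such step exists, the single-step alternative is excluded because $3S'/4>3S_0/32>S_0/16$.
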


Here, (i) bounds the product of the amplitude ratios, (ii) bounds the threshold-to-amplitude ratio immediately after the last selected checkpoint, and (iii) bounds the remaining tail mass. Assertion (iv) locates the selected indices in $\{j_*,\ldots,r\}$.
In the application below, $(h_1,\ldots,h_r)$ is the entire unselected tail from immediately after the latest chosen checkpoint through the end of the current block, including steps from preceding blocks; hence $r$ may exceed $m$. Repair checkpoints may therefore lie in the incoming tail, while previously chosen checkpoints remain fixed.

The quantities \eqref{eq:repair-indexed-construction} in the lemma are from the Huber repair components in \cref{lem:huber-local}.
For a Huber repair component $\Phi_i$, the threshold $\ell_{i+1}$ is the value of $Y$ just before the checkpoint update, and $D_{i+1}$ is the increase in $Y$ produced by that update. By \eqref{eq:huber-ratio-mass}, their ratio is comparable to $s_i/b_i$ when $s_i$ is small relative to $\kappa$. Therefore, a step comparable to its preceding gap mass restores a bounded ratio. However, such a step need not exist, so \yy{\cref{lem:repair}} also allows two checkpoints. The checkpoint choice and the proof of the transfer bound are given in \cref{app:repair}.

\section{Assembling the Hard Function and Proving the Theorem}\label{sec:main-proof}
After selecting the checkpoints as in \cref{sec:checkpoints}, we construct the objective \eqref{eq:global-function} from the local hard functions of \cref{sec:local}. Each checkpoint added by a repair uses a Huber repair component. Within each nonempty maximizing checkpoint set, the first checkpoint uses a triangular Huber bridge and the remaining checkpoints use bending components. 

With the hard function constructed, \Cref{lem:mixed-block} bounds the product of transfer ratios from below by an absolute constant for each nonempty maximizing set, and \cref{lem:repair-cost} controls the accumulated penalty from repairs. Together, they yield the bound in \cref{prop:fixed} for a fixed bending parameter and every horizon. \Cref{lem:repetition} removes the prefactor and gives the corresponding bound for relative function error. Finally, \cref{sec:bending-paramater-choice} chooses the bending parameter to obtain the (almost) silver dependence on $\kappa$.

\subsection[Realizing a Maximizing Checkpoint Set]{Realizing a Maximizing Checkpoint Set}\label{sec:mixed-block}

Fix $0<\beta\le1/4$ and set $a=a_\beta$, $c=c_\beta$, $p=p(c)$, and $\nu=1/p$. We keep $\beta$ fixed through the construction and repetition argument, and choose its value in \cref{sec:bending-paramater-choice}.

Fix $\kappa\ge4$, a block $H=(h_1,\ldots,h_r)$, and the unselected tail from preceding blocks, whose mass is $w$. Let $T=\{t_1<\cdots<t_k\}$, with $k\ge1$, be an admissible maximizer of $P_w(T;H)$. Throughout this subsection, $i=1,\ldots,k$ numbers the selected checkpoints and their components within this block; $t_i$ is the corresponding index in $H$. Take $b_i=h_{t_i}>8$ and the gap masses $s_i$ from \eqref{eq:carried-gap-definitions}, and put
\[
 B_i=b_i-8,\qquad d_i=c(a+s_i+8)\qquad(1\le i\le k).
\]
The first gap includes the unselected steps after the preceding checkpoint, including those before this block. For each $1\le i\le k$, let $\chi_i$ be the product of $1-h/\kappa$ over all stepsizes $h$ in gap $i$.

The local trajectory for $\Phi_i$ starts at $(\ell_i+D_i,0)$ and reaches $Y=\ell_{i+1}+D_{i+1}$ after the update at checkpoint $t_i$. The parameters $(\ell_1,D_1)$ are given at the beginning of the first gap; the subsequent pairs are constructed in checkpoint order. Write
\[
 \begin{aligned}
 r_i&=\frac{\ell_i}{D_i} &&(1\le i\le k+1),\\
 G_i&=\frac{D_{i+1}}{D_i},\qquad
 \eta_i=\chi_i-(1-\chi_i)r_i &&(1\le i\le k).
 \end{aligned}
\]
We use a bridge for $i=1$ and bending components for $2\le i\le k$. Whenever $s_i/\kappa\le1/16$ and $\eta_i>0$, the local lemmas give
\begin{align}
 G_1&\ge\frac14\frac{B_1}{d_1}\eta_1
 &&\text{(bridge at $t_1$)},\label{eq:bridge-score}\\
 G_i&\ge\frac{B_i}{d_i}\eta_i(1-s_i/\kappa)
 &&(2\le i\le k),\label{eq:bending-score}\\
 r_{i+1}&\le\frac{4(s_i+2)}{B_i},\qquad
 \eta_i\ge1-\frac{s_i}{\kappa}(1+r_i)
 &&(1\le i\le k).\label{eq:ratios}
\end{align}
Here the bridge bounds follow from \cref{lem:bridge} using $b_1-4\ge B_1$ and $d_1\ge s_1+2$; the remaining bounds follow from \cref{lem:bending-local} and \eqref{eq:local-gap-estimates}.

The maximizing score is at least $P_w(\varnothing;H)=1$. Under the condition in \eqref{eq:mixed-smallness} below, the next lemma realizes the selected transfers with combined ratio at least $P_w(T;H)/8\ge1/8$, and leaves a threshold-to-amplitude ratio at most $4$. Thus the checkpoint selection retains a constant fraction of the incoming amplitude while allowing the construction to continue.
\begin{lemma}[Realizing the maximizing checkpoint set]\label{lem:mixed-block}
For the block and checkpoint set above, fix $R\ge0$, $\ell_1\ge0$, and $D_1>0$ with $\ell_1/D_1\le R$. Suppose that
\begin{equation}\label{eq:mixed-smallness}
 \frac1\kappa\sum_{i=1}^k d_i\le\frac1{128(R+1)}.
\end{equation}
There exist a bridge $\Phi_1$ and bending components $\Phi_2,\ldots,\Phi_k$, with thresholds $\ell_2,\ldots,\ell_{k+1}\ge0$ and amplitudes $D_2,\ldots,D_{k+1}>0$, such that the local trajectory for each component satisfies \eqref{eq:local-transfer-states} on its prescribed gap and checkpoint. The iterations for $\Phi_1$ follow \eqref{eq:local-forced-dynamics} with arbitrary nonnegative input contributions; those for $\Phi_2,\ldots,\Phi_k$ follow \eqref{eq:local-dynamics}. Moreover,
\[
 \frac{D_{k+1}}{D_1}=\prod_{i=1}^kG_i
 \ge\frac18 P_w(T;H),\qquad
 \frac{\ell_{k+1}}{D_{k+1}}\le4.
\]
\end{lemma}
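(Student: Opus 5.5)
We construct the components one at a time in checkpoint order. $\Phi_1$ is the bridge of \cref{lem:bridge}, fed with the given $(\ell_1,D_1)$. For $2\le i\le k$, $\Phi_i$ is the bending component of \cref{lem:bending-local}, fed with the pair $(\ell_i,D_i)$ produced by the previous step.

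\textbf{Hypotheses and compatibility.} At each step we must check the hypotheses of the relevant local lemma: $\kappa\ge4$, $s_i/\kappa\le1/16$, $\eta_i>0$, and $b_i>8$ (which also gives $b_1>4$ for the bridge).

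The stepsize condition holds by admissibility. For the mass condition, $s_i\le a+s_i+8\le d_i$ since $c\ge1$, so \eqref{eq:mixed-smallness} gives $s_i/\kappa\le\sum_j d_j/\kappa\le1/128$.

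The dynamics must also be checked. The bridge tolerates arbitrary $f_j\ge0$, so the forced recurrence \eqref{eq:local-forced-dynamics} is fine for $\Phi_1$. For $i\ge2$ we need $f_j=0$ for bending. Both bridge and bending satisfy $\partial_2\Phi_{i-1}(X,Y)=0$ whenever $Y\ge\ell_i$, for every $X$. During gap $i$ and at its checkpoint query, the transfer states \eqref{eq:local-transfer-states} for $\Phi_i$ give $X_j=x^{(i)}\ge\ell_i$. Hence $\Phi_{i-1}$ contributes nothing to the update of $x^{(i)}$, and \eqref{eq:local-dynamics} applies. The inactivity of later components is already built into \eqref{eq:local-transfer-states}.

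\textbf{Positivity of $\eta_i$ (the main step).} This requires controlling $r_i=\ell_i/D_i$ through \eqref{eq:ratios}.

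For $i=1$, $r_1\le R$, so $\eta_1\ge1-s_1(1+R)/\kappa\ge1-d_1(R+1)/\kappa$.

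For $i\ge2$, \eqref{eq:ratios} gives $r_i\le4(s_{i-1}+2)/B_{i-1}\le4d_{i-1}/B_{i-1}$, using $s_{i-1}+2\le d_{i-1}$. Applying \cref{lem:fixed-c-block}(iii) to the pair $(i-1,i)$ gives
\[
 s_ir_i\le\frac{4s_id_{i-1}}{B_{i-1}}\le8(d_{i-1}+d_i).
\]
Therefore
\[
 \eta_i\ge1-\frac{s_i+8(d_{i-1}+d_i)}{\kappa}.
\]
We use the elementary inequality $\prod(1-x_i)\ge1-\sum x_i$ for $x_i\in[0,1]$. Collecting all losses, namely the $\eta_i$ factors and the $(1-s_i/\kappa)$ factors for $i\ge2$, the total subtracted quantity is at most
\[
 \frac{(R+1)+1+16+1}{\kappa}\sum_j d_j
 \le\frac{19(R+1)}{128(R+1)}<\frac12 .
\]
In particular each individual $\eta_i$ is positive, which justifies the inductive application of the local lemmas. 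The same bound shows that the product of all these factors is at least $1/2$.

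\textbf{Conclusion.} Multiplying \eqref{eq:bridge-score} and \eqref{eq:bending-score} gives
\[
 \prod_{i=1}^kG_i\ge\frac14\cdot\frac12\prod_{i=1}^k\frac{B_i}{d_i}=\frac18P_w(T;H).
\]
For the outgoing ratio, \eqref{eq:ratios} with $i=k$ gives $r_{k+1}\le4(s_k+2)/B_k\le4d_k/B_k$. By \cref{lem:fixed-c-block}(iv), $B_k\ge d_k$, so $r_{k+1}\le4$.

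The delicate step is the $\eta_i$ estimate. A naive bound on $r_i$ could be large when $B_{i-1}$ is small. It is the coupling inequality of \cref{lem:fixed-c-block}(iii) that converts $s_ir_i$ into a sum of the $d$'s, which the smallness assumption \eqref{eq:mixed-smallness} then absorbs.
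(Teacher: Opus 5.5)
Your proposal is correct and follows the paper's proof essentially step for step. You build the bridge and then the bending components in checkpoint order, bound $1-\eta_i$ through \eqref{eq:ratios} and \cref{lem:fixed-c-block}~(iii), sum all losses to a total below $1/2$, and close with \cref{lem:fixed-c-block}~(iv). The only difference is that you apply the second inequality of (iii), $s_i d_{i-1}/B_{i-1}\le 2(d_{i-1}+d_i)$, where the paper instead uses $s_i\le d_i$ together with the first inequality; this gives the slightly better constant $R+19$ in place of the paper's $35(R+1)$.
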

\begin{proof}
We first construct the components in checkpoint order. Since $s_i\le d_i$, \eqref{eq:mixed-smallness} gives $s_i/\kappa\le1/128<1/16$ for every $1\le i\le k$. Thus every stepsize within a gap is less than $\kappa$. For the first gap,
\[
 1-\eta_1\le\frac{s_1}{\kappa}(1+r_1)
 \le\frac{(R+1)d_1}{\kappa}\le\frac1{128}.
\]
Hence $\eta_1>0$, and \cref{lem:bridge} constructs $\Phi_1$ and its outgoing parameters, satisfying \eqref{eq:bridge-score} and \eqref{eq:ratios} for $i=1$.

Now fix $2\le i\le k$ and suppose that $\Phi_1,\ldots,\Phi_{i-1}$ have been constructed. Their last ratio bound gives $r_i\le4(s_{i-1}+2)/B_{i-1}$. Since $s_{i-1}+2\le d_{i-1}$, the deletion comparison in \cref{lem:fixed-c-block}~(iii) yields
\[
 \begin{aligned}
 1-\eta_i
 &\le\frac{s_i}{\kappa}(1+r_i)
 \le\frac{d_i}{\kappa}
       +\frac{4d_{i-1}d_i}{\kappa B_{i-1}}\\
 &\le\frac{17d_i+16d_{i-1}}{\kappa}
 \le\frac{17}{128(R+1)}<1.
 \end{aligned}
\]
Thus $\eta_i>0$, so \cref{lem:bending-local} constructs $\Phi_i$ and its outgoing parameters, satisfying \eqref{eq:bending-score} and \eqref{eq:ratios} at index $i$. At every gap query and at the checkpoint query, the new trajectory has $x^{(i)}\ge\ell_i$. The preceding bridge or bending component therefore satisfies $\partial_2\Phi_{i-1}=0$ and does not affect the update of $x^{(i)}$ in \eqref{eq:local-dynamics}. The bounds in \eqref{eq:local-transfer-states} also ensure that every later component has value zero at these query points, and hence zero gradient. This completes the induction.

We now bound the total amplitude decrease. Summing the preceding estimates gives
\[
 \sum_{i=1}^k(1-\eta_i)+\sum_{i=2}^k\frac{s_i}{\kappa}
 \le35(R+1)\frac1\kappa\sum_{i=1}^k d_i
 \le\frac{35}{128}<\frac12.
\]
All factors $\eta_i$ and $1-s_i/\kappa$ lie in $(0,1]$. Using $\prod_j(1-u_j)\ge1-\sum_j u_j$ for $0\le u_j\le1$, and multiplying \eqref{eq:bridge-score}--\eqref{eq:bending-score}, we obtain
\[
 \begin{aligned}
 \frac{D_{k+1}}{D_1}
 &=\prod_{i=1}^kG_i \ge\frac14 P_w(T;H)
       \prod_{i=1}^k\eta_i\prod_{i=2}^k(1-s_i/\kappa)
 \ge\frac18 P_w(T;H).
 \end{aligned}
\]
Finally, \cref{lem:fixed-c-block}~(iv) gives $B_k\ge d_k$. Applying the ratio bound in \eqref{eq:ratios} at $i=k$ yields
\[
 r_{k+1}\le\frac{4(s_k+2)}{B_k}
 \le\frac{4d_k}{B_k}\le4.
\]
\end{proof}

\subsection{Continuing the Construction for Every Horizon}

A repair can involve an unselected tail spanning several blocks. We bound the total number of indices considered by all repairs, which will control their total amplitude loss.

Fix integers $n,m\ge1$, a nonnegative schedule $(h_1,\ldots,h_n)$, and $S_0>0$. Partition the schedule into $B:=\lceil n/m\rceil$ consecutive blocks of at most $m$ steps. Consider a checkpoint-selection procedure that examines the blocks in order and performs at most one repair after each block.

Number the repairs in the order made, $j=1,\ldots,J$, where $J\le B$. Repair $j$ considers the indices $u_j,\ldots,v_j$, where $u_j$ is one plus the latest checkpoint index before this repair ($u_j=1$ if none exists), and $v_j$ is the end of the current block. Let $p_j\in\{u_j,\ldots,v_j\}$ be the last checkpoint selected by this repair, and let $n_j:=v_j-u_j+1$. Checkpoints are selected in increasing order, so $u_{j+1}\ge p_j+1$ for $1\le j<J$.
These indices are illustrated in \Cref{fig:repair-indices}.

\begin{figure}[!b]
\centering
\begingroup
\definecolor{repairblue}{HTML}{345D9D}
\definecolor{repairorange}{HTML}{BF620C}
\definecolor{repairink}{HTML}{242731}
\definecolor{repairgray}{HTML}{717783}
\definecolor{repairlight}{HTML}{CAD0D7}
\begin{tikzpicture}[x=1cm,y=1cm,>=Latex,font=\footnotesize,
                    text=repairink,inner sep=1pt]
\path[use as bounding box] (0,0) rectangle (16.2,4.15);

\foreach \i/\name in {0/{preceding block},1/{$\cdots$},
                         2/{preceding block},3/{current block}}{
  \pgfmathsetmacro{\leftedge}{.75+3.675*\i}
  \pgfmathsetmacro{\rightedge}{\leftedge+3.675}
  \filldraw[fill=repairlight!12,draw=repairlight,line width=.45pt]
    (\leftedge,3.40) rectangle (\rightedge,3.95);
  \node at ({(\leftedge+\rightedge)/2},3.675) {\name};
}
\foreach \x in {.75,4.425,8.1,11.775,15.45}{
  \draw[repairlight,dashed,line width=.4pt] (\x,3.39)--(\x,2.25);
}

\coordinate (previous) at (2.89375,2.60);
\coordinate (start) at (3.50625,2.60);
\coordinate (last) at (13.30625,2.60);
\coordinate (retained) at (13.91875,2.60);
\coordinate (end) at (15.14375,2.60);
\fill[repairblue!7] (3.20,2.38) rectangle (15.45,2.82);
\draw[repairlight,line width=.55pt] (.40,2.60)--(15.80,2.60);
\foreach \i in {1,...,24}{
  \fill[repairgray] ({.75+.6125*(\i-.5)},2.60) circle (1pt);
}

\draw[repairink,line width=.8pt] ($(previous)+(0,-.14)$)--($(previous)+(0,.19)$);
\filldraw[fill=repairink,draw=white,line width=.35pt] (previous) circle (2.1pt);
\node[font=\small] at ($(previous)+(0,-.39)$) {$u_j-1$};
\node[font=\scriptsize,align=center] at ($(previous)+(-.55,-.86)$)
  {latest previously\\chosen checkpoint};

\draw[repairblue,line width=.45pt] ($(start)+(0,.07)$)--($(start)+(0,.26)$);
\node[text=repairblue,font=\small] at ($(start)+(0,.44)$) {$u_j$};
\draw[repairblue,line width=.45pt] ($(end)+(0,.07)$)--($(end)+(0,.26)$);
\node[text=repairblue,font=\small] at ($(end)+(0,.44)$) {$v_j$};

\draw[repairorange,line width=.8pt] ($(last)+(0,-.14)$)--($(last)+(0,.19)$);
\filldraw[fill=repairorange,draw=white,line width=.35pt] (last) circle (2.1pt);
\node[text=repairorange,font=\small] at ($(last)+(0,.44)$) {$p_j$};

\draw[repairgray,line width=.45pt]
  ($(retained)+(0,-.36)$)--($(retained)+(0,-.48)$)
  --($(end)+(0,-.48)$)--($(end)+(0,-.36)$);
\node[text=repairgray,font=\scriptsize,align=center] at (14.53125,1.78)
  {remaining tail\\$p_j+1,\ldots,v_j$};

\draw[repairblue,line width=.55pt]
  ($(start)+(0,-1.19)$)--($(start)+(0,-1.34)$)
  --($(end)+(0,-1.34)$)--($(end)+(0,-1.19)$);
\node[text=repairblue,align=center] at (9.325,.83)
  {indices considered by repair $j$: $n_j=v_j-u_j+1$};
\end{tikzpicture}
\endgroup
\caption{Indices for repair $j$. The black checkpoint is $u_j-1$; the repair considers all indices from $u_j$ through $v_j$, the last index of the current block. Orange marks $p_j$, the last checkpoint chosen by this repair, leaving indices $p_j+1,\ldots,v_j$ unselected. The location of $p_j$ in the current block is illustrative: it may also lie in a preceding block.}
\label{fig:repair-indices}
\end{figure}

Each repair starts with unselected mass at least $S_0$ and leaves mass at most $S_0/4$ after its last checkpoint:
\begin{equation}\label{eq:repair-cost-mass}
 \sum_{t=u_j}^{v_j}h_t\ge S_0,\qquad
 \sum_{t=p_j+1}^{v_j}h_t\le S_0/4
 \qquad(1\le j\le J).
\end{equation}
\Cref{lem:repair} guarantees that repair $j$ retains at least a fraction $c_0(n_j+1)^{-4}$ of its incoming amplitude, so we must control $\sum_{j=1}^J\log(n_j+1)$. The next lemma bounds this sum in terms of the block count $B$ and block length $m$, even though an individual repair may consider more than $m$ indices. This is what controls the total amplitude loss for arbitrarily long schedules.
\begin{lemma}[Counting indices used by repairs]\label{lem:repair-cost}
Under the conditions above, each index is considered by at most two repairs. Consequently,
\begin{equation*}
 \sum_{j=1}^J n_j\le2n,\qquad
 \sum_{j=1}^J\log(n_j+1)\le B\log(2m+1).
\end{equation*}
\end{lemma}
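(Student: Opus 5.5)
The plan is to show the "at most two repairs" claim first by a mass argument based on \eqref{eq:repair-cost-mass}. The two displayed inequalities then follow by counting and concavity. Throughout, I use three structural facts.
\begin{itemize}[leftmargin=*]
\item Each considered set $\{u_j,\ldots,v_j\}$ is an interval of indices.
\item Since checkpoints are chosen in increasing order, $u_{j+1}\ge p_j+1\ge u_j+1$, so the left endpoints $u_j$ are strictly increasing.
\item Since at most one repair follows each block, the right endpoints satisfy $v_j<v_{j+1}$.
\end{itemize}

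\emph{Key step: no index lies in repairs $j$ and $j+2$.} I will show $u_{j+2}>v_j$ for every $1\le j\le J-2$. Suppose instead that $u_{j+2}\le v_j$, and split the interval of repair $j+1$ as
\[
\{u_{j+1},\ldots,v_{j+1}\}=\{u_{j+1},\ldots,u_{j+2}-1\}\cup\{u_{j+2},\ldots,v_{j+1}\}.
\]
For the first piece, $u_{j+2}-1<v_j$, so it is contained in $\{u_{j+1},\ldots,v_j\}$. Since $u_{j+1}\ge p_j+1$, this set lies inside $\{p_j+1,\ldots,v_j\}$, whose mass is at most $S_0/4$ by \eqref{eq:repair-cost-mass}. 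For the second piece, $u_{j+2}\ge p_{j+1}+1$, so it lies in $\{p_{j+1}+1,\ldots,v_{j+1}\}$, whose mass is also at most $S_0/4$. Since all stepsizes are nonnegative, the total mass considered by repair $j+1$ is then at most $S_0/2<S_0$. This contradicts the first inequality in \eqref{eq:repair-cost-mass}, which uses $S_0>0$.

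Now suppose some index $t$ were considered by repairs $j_1<j_2<j_3$. Then $j_3\ge j_1+2$, and monotonicity of the $u$'s gives $u_{j_1+2}\le u_{j_3}\le t\le v_{j_1}$, which contradicts the key step. Hence each index is considered by at most two repairs. Summing the indicator of "index $t$ is considered by repair $j$" over all $t$ and $j$ then gives $\sum_j n_j\le 2n$.

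\emph{Logarithmic sum.} If $J=0$ there is nothing to prove. Otherwise, $\log(x+1)$ is concave, so Jensen's inequality and the bound on $\sum_j n_j$ give
\[
\sum_{j=1}^J\log(n_j+1)\le J\log\Bigl(\tfrac{2n}{J}+1\Bigr).
\]
The function $J\mapsto J\log(2n/J+1)$ is nondecreasing in $J>0$: its derivative is $\log(1+x)-x/(1+x)\ge0$ with $x=2n/J$. Since $J\le B$, the sum is at most $B\log(2n/B+1)$. Finally, $n\le Bm$, so this is at most $B\log(2m+1)$.

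The main obstacle is the key step. The right decomposition is to cut the interval of the middle repair at $u_{j+2}$, so that each piece falls inside one of the two "remaining tails" whose mass \eqref{eq:repair-cost-mass} bounds by $S_0/4$. The rest is routine bookkeeping.
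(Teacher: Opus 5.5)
Your proof is correct and follows essentially the paper's argument: both derive a contradiction from the two mass bounds in \eqref{eq:repair-cost-mass}, with only the cut point differing. The paper cuts repair $j+1$'s interval at $p_{j+1}$ to get $p_{j+1}>v_j$ and hence $u_{j+2}>v_j$, while you cut at $u_{j+2}$ directly; for the logarithmic sum, your Jensen-over-$J$ step plus monotonicity in $J$ is equivalent to the paper's shortcut of padding $n_1,\ldots,n_J$ with zeros to $B$ entries.
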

\begin{proof}
We first show $p_{j+1}>v_j$ for $1\le j<J$. Otherwise, $u_{j+1}\ge p_j+1$ and \eqref{eq:repair-cost-mass} give
\[
 S_0\le\sum_{t=u_{j+1}}^{v_{j+1}}h_t
 \le\sum_{t=p_j+1}^{v_j}h_t
    +\sum_{t=p_{j+1}+1}^{v_{j+1}}h_t
 \le\frac{S_0}{2},
\]
a contradiction. Hence $u_{j+2}\ge p_{j+1}+1>v_j$ for $1\le j\le J-2$, so repairs whose numbers differ by at least two share no indices. This proves $\sum_{j=1}^J n_j\le2n$.

Pad $n_1,\ldots,n_J$ with zeros to obtain $B$ entries. Concavity of the logarithm and $n\le Bm$ give
\[
 \sum_{j=1}^J\log(n_j+1)
 \le B\log\left(1+\frac{\sum_{j=1}^J n_j}{B}\right)
 \le B\log(2m+1).
\]
\end{proof}

We now combine the transfer bound for nonempty maximizing sets with the aggregate repair bound, assigning the local hard functions as described at the start of this section. For GD on the resulting objective \eqref{eq:global-function}, a lower bound on the last coordinate gives the following lower bound on relative squared distance for every horizon.

\begin{proposition}[Uniform bound for a fixed bending parameter]\label{prop:fixed}
There are absolute constants $c_*,C,\varepsilon>0$ with the following property.
For every $0<\beta\le1/4$ and $\kappa\ge4$, set
\begin{equation*}
 S_0:=\varepsilon\kappa,\qquad
 m:=\left\lfloor c_*(\kappa/a_\beta)^{\nu(c_\beta)}\right\rfloor.
\end{equation*}
If $S_0\ge1$ and $m\ge1$, then every horizon $n\ge1$ and every nonnegative schedule $H$ satisfy
\begin{equation}\label{eq:fixed-prefactor}
 \Derr_{n,\kappa}(H)\ge
 \exp\!\left[-C\left(1+\frac nm\right)\log(m+1)\right].
\end{equation}
\end{proposition}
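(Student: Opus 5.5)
We scan the blocks from left to right, maintaining the invariant that after each processed block the current pair $(\ell,D)$ satisfies $\ell/D\le R$ for a fixed absolute $R$ (say $R=32C_0$ with $C_0$ chosen below; note $4\le R$). We also maintain that the unselected tail mass $w$ is below $S_0$. Starting from $(\ell_1,D_1)=(0,1)$ and $w=0$, we process each block as follows.

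\emph{Nonempty maximizer.} If the score maximizer $T$ for the current block is nonempty, we realize it by \cref{lem:mixed-block}. The smallness condition \eqref{eq:mixed-smallness} follows from \cref{lem:fixed-c-block}~(ii): since $\sum_i d_i\le C_b(w+a_\beta m^{p})$ with $w<S_0=\varepsilon\kappa$ and $a_\beta m^p\le a_\beta c_*^{p}\kappa/a_\beta\le c_*\kappa$, choosing $\varepsilon,c_*$ small relative to $C_b$ and $R$ gives \eqref{eq:mixed-smallness}. Here we use $p\ge1$, $c_*\le1$, and $m^p\le c_*^p\kappa/a_\beta$. The block then retains amplitude at least $1/8$ and leaves $r\le4\le R$. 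The new tail has mass $s_{\rm tail}\le C_b a_\beta m^p\le C_bc_*\kappa<S_0$ provided $c_*<\varepsilon/C_b$.

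\emph{Empty maximizer.} If $T=\varnothing$, the block joins the tail. By \cref{lem:fixed-c-block}~(i), the enlarged mass is at most $C_b(w+a_\beta m^p)\le C_b(S_0+c_*\kappa)\le 2C_bS_0$. If this mass is still below $S_0$, nothing changes: the amplitude is untouched and the incoming excess persists. If instead it reaches $S_0$, we apply \cref{lem:repair} to the whole tail with $C_0:=2C_b$. Its hypotheses hold: $S_0\le S\le C_0S_0$, and $1\le S_0\le\varepsilon_0\kappa/C_0$ once $\varepsilon\le\varepsilon_0/C_0$. The lemma uses Huber repair components, which by \cref{lem:huber-local} tolerate arbitrary $f_j\ge0$ from the preceding component. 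It retains a factor $c_0(n_j+1)^{-4}$, restores $\ell/D\le R$, and leaves tail mass at most $S_0/4<S_0$. The following bridge (the first component of the next nonempty block) likewise tolerates the contribution of a repair component. Bending components only follow bridges or bendings, whose $\partial_2$ vanishes above the threshold, as \cref{lem:mixed-block} requires. Since the tail is always below $S_0\le\kappa/16$, every gap satisfies the contraction hypotheses.

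\emph{Final step.} Let $(\ell_{k+1},D_{k+1})$ be the pair after the last checkpoint, with the final tail of mass $w<S_0$ and no further component. The last coordinate then evolves only under the quadratic term, so $x_n^{(k+1)}=\chi(\ell_{k+1}+D_{k+1})\ge(1-S_0/\kappa)D_{k+1}\ge D_{k+1}/2$. Some care is needed: the previous component's $\partial_2$ is zero above the threshold for bridge and bending, and nonpositive for repair, so in every case it only pushes the coordinate up. This gives $\Derr\ge D_{k+1}^2/4$. Multiplying all the factors gives
\[
D_{k+1}\ge 8^{-B}\,c_0^{J}\prod_j(n_j+1)^{-4},
\]
and \cref{lem:repair-cost} bounds $\sum_j\log(n_j+1)\le B\log(2m+1)$ with $B\le 1+n/m$. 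The claim \eqref{eq:fixed-prefactor} follows.

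The main obstacle is the bookkeeping of constants. We must choose $\varepsilon,c_*,R,C_0$ consistently, even though $C_0$ depends on $C_b$ and $\varepsilon_0$ depends on $C_0$, and we must check that the mixed-smallness condition holds with the carried tail $w$ included. We handle this by fixing $C_0=2C_b$ first, then $\varepsilon\le\varepsilon_0(C_0)/C_0$, then $c_*$ small relative to $\varepsilon$.
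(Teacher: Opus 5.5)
Your proposal is correct and follows the paper's proof essentially step for step. It uses the same left-to-right block scan maintaining $w<S_0$ and $\ell/D\le R=32C_0$, and realizes nonempty maximizers via \cref{lem:mixed-block}, with \eqref{eq:mixed-smallness} obtained from \cref{lem:fixed-c-block}~(ii) and $a_\beta m^p\le c_*\kappa$. It then performs repairs via \cref{lem:repair} once the tail mass reaches $S_0$, bounds the final coordinate using $\partial_2\Phi_k\le0$, and aggregates repair losses through \cref{lem:repair-cost}. The remaining differences are cosmetic: you take $C_0=2C_b$ rather than $C_b+2$, and you only require the Case~1 tail to be below $S_0$ rather than $S_0/4$. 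The latter suffices because \cref{lem:repair-cost} uses only the repair mass bounds.
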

\begin{proof}
Let $C_b\ge1$ be an absolute constant covering the budget bounds in \cref{lem:fixed-c-block}, and set
\[
 C_0:=C_b+2,\qquad R:=32C_0.
\]
Choose $\varepsilon$ small enough for \cref{lem:mixed-block,lem:repair}, and then choose $c_*$ so small that
\begin{equation}\label{eq:budget-choice}
 C_b a m^p\le S_0/4.
\end{equation}
These choices are uniform: $p(c)$ stays within a fixed compact subset of $(1,2)$ and $a m^p\le c_*^p\kappa$.

Partition the schedule into $B:=\lceil n/m\rceil$ fixed blocks, each containing at most $m$ steps. After examining a block, retain all unselected steps after the most recent checkpoint up to the end of that block. Write $w$ for their total mass, and let $\ell\ge0$ and $D>0$ be the threshold and amplitude immediately after the most recent checkpoint. We maintain
\begin{equation}\label{eq:global-invariant}
 w<S_0,\qquad \frac\ell D\le R.
\end{equation}
Initially, $w=\ell=0$ and $D=1$. The unselected tail following the checkpoint is included in the next local transfer.

For the next fixed block, maximize \eqref{eq:carried-score} over its checkpoint sets, with $w$ equal to the mass of the unselected tail from preceding blocks. Choose a nonempty maximizing set whenever one exists.
\paragraph{Case 1: The maximizing checkpoint set is nonempty.}
By \cref{lem:fixed-c-block} (ii),
\[
 \sum_i d_i\le C_b(w+a m^p)\le(C_b+1)S_0\le C_0S_0.
\]
The choice of $\varepsilon$ ensures \eqref{eq:mixed-smallness}.
\Cref{lem:mixed-block} realizes the transfers with a product at least $1/8$, since the maximizing score is at least the empty score $1$.
The outgoing ratio is at most $4$ and, by \eqref{eq:budget-choice}, the remaining tail mass is at most $S_0/4$.
Thus \eqref{eq:global-invariant} is maintained.

\paragraph{Case 2: The maximizing checkpoint set is empty.}

All steps in this block remain unselected, so they extend the tail after the latest checkpoint. Part~(i) of \cref{lem:fixed-c-block} implies that the total mass of this enlarged tail is at most
\[
 C_b(w+a m^p)\le C_0S_0.
\]
If this mass is less than $S_0$, retain this tail without selecting a new checkpoint. Otherwise perform the next repair, numbered $j$. Let $u_j$ be one plus the latest checkpoint index, taking $u_j=1$ if there is no checkpoint yet, and let $v_j$ be the end of the current block. Put $n_j=v_j-u_j+1$. Then
\[
 S_0\le\sum_{t=u_j}^{v_j}h_t\le C_0S_0,
 \qquad 1\le S_0\le\varepsilon_0\kappa/C_0,
\]
where the last inequality follows from the choice of $\varepsilon$. The local input immediately after the latest checkpoint, or at initialization, is $(\ell+D,0)$, with $\ell/D\le R$ by \eqref{eq:global-invariant}.

Apply \cref{lem:repair} with $r=n_j$, initial data $(\ell_1,D_1)=(\ell,D)$, and the relabelled steps $(h_{u_j},\ldots,h_{v_j})$. It supplies $q_j\in\{1,2\}$ and local checkpoint indices $t_1<\cdots<t_{q_j}$. Their indices in the full schedule are
\[
 u_j+t_1-1,\ldots,u_j+t_{q_j}-1,
 \qquad p_j:=u_j+t_{q_j}-1.
\]
Select these indices as checkpoints and use the Huber repair components with the parameters in \eqref{eq:repair-indexed-construction}. Part~(i) of \cref{lem:repair} gives
\[
 \frac{D_{q_j+1}}{D_1}\ge c_0(n_j+1)^{-4}.
\]
In the full schedule, parts~(ii) and~(iii) give
\[
 \frac{\ell_{q_j+1}}{D_{q_j+1}}\le R,
 \qquad
 \sum_{t=p_j+1}^{v_j}h_t\le S_0/4.
\]
Thus the new threshold and amplitude are $\ell_{q_j+1}$ and $D_{q_j+1}$, and the retained tail consists of the steps at indices $p_j+1,\ldots,v_j$. These bounds restore \eqref{eq:global-invariant}.

\paragraph{One globally defined objective.}
Write all selected checkpoint indices in the full schedule as $t_1<\cdots<t_k$, and set $t_0=0$.
Starting from $(\ell_1,D_1)=(0,1)$, we construct $\Phi_1,\ldots,\Phi_k$ in checkpoint order using the local constructions in \cref{sec:local}.
For each $i$, use the incoming pair $(\ell_i,D_i)$, the gap stepsizes $h_{t_{i-1}+1},\ldots,h_{t_i-1}$, and the checkpoint stepsize $b_i=h_{t_i}$ to construct $\Phi_i$ and determine the outgoing pair $(\ell_{i+1},D_{i+1})$.
Use a Huber repair component (\cref{lem:huber-local}) if $t_i$ was selected by a repair, a bridge (\cref{lem:bridge}) if it is the first checkpoint of a nonempty maximizing set, and a bending component (\cref{lem:bending-local}) otherwise.

For $i<k$, use the resulting pair $(\ell_{i+1},D_{i+1})$ as the input for the next construction.
Cases~1 and~2 verify the hypotheses needed at every step.
For Huber repair and bridge components, $D_{i+1}$ is independent of the nonnegative input contributions $f_j$, by \eqref{eq:huber-local-output} and \eqref{eq:bridge-output}.
Thus the recursion fixes all components from the prescribed schedule before GD is run.

Define $F$ on $\R^{k+1}$ by \eqref{eq:global-function}, with $x_0=e_1$. Each $\Phi_i$ is nonnegative, convex, $1$-smooth, and zero at $(0,0)$. Since each coordinate appears in at most two of the $\Phi_i$, $F$ is $1$-smooth and $1/\kappa$-strongly convex, with minimizer $x_*=0$.

We now verify the trajectory by induction over the checkpoints. The induction hypothesis is
\[
 x_{t_{i-1}}^{(i)}=\ell_i+D_i,\qquad
 x_{t_{i-1}}^{(j)}=0\quad(i+1\le j\le k+1),
\]
which holds for $i=1$. Within gap $i$, we prove the bounds in \eqref{eq:local-transfer-states} by induction over the updates: at every query $t_{i-1}\le t<t_i$,
\[
 x_t^{(i)}\ge\ell_i,\qquad
 0\le x_t^{(i+1)}\le\ell_{i+1},\qquad
 x_t^{(j)}=0\quad(i+2\le j\le k+1).
\]
Indeed, all components after $\Phi_i$ have value zero at these query points and hence zero gradient. If $\Phi_i$ is a bending component, then $\Phi_{i-1}$ is a bridge or a bending component. Since $x_t^{(i)}\ge\ell_i$, $\partial_2\Phi_{i-1}(x_t^{(i-1)},x_t^{(i)})=0$, so the update follows \eqref{eq:local-dynamics}. If $\Phi_i$ is a Huber repair component or a bridge, the preceding component's nonpositive output derivative gives the permitted nonnegative input contribution in \eqref{eq:local-forced-dynamics}; for $i=1$ this contribution is zero. Components with index less than $i-1$ involve neither coordinate. The local lemmas therefore maintain the displayed bounds through the checkpoint query and give
\[
 x_{t_i}^{(i+1)}=\ell_{i+1}+D_{i+1},\qquad
 x_{t_i}^{(j)}=0\quad(i+2\le j\le k+1).
\]
This proves the next induction hypothesis and realizes all the prescribed transfers along GD on the single objective $F$.

\paragraph{The aggregate repair cost.}
The procedure makes at most one repair per block, and the two mass bounds in \eqref{eq:repair-cost-mass} were verified above. Each repair $j$ retains at least $c_0(n_j+1)^{-4}$ of its incoming amplitude. By \cref{lem:repair-cost} and $J\le B$, their combined factor is at least
\[
 \prod_{j=1}^J c_0(n_j+1)^{-4}
 \ge\exp[-C B\log(m+1)],
\]
where $C$ depends only on $c_0$. Each block with a nonempty maximizing checkpoint set contributes a factor at least $1/8$ by \cref{lem:mixed-block}. There are at most $B$ such blocks, so, after increasing $C$, the final amplitude is at least $\exp[-C B\log(m+1)]$.

\paragraph{The final coordinate.}
If $k\ge1$, then $x_{t_k}^{(k+1)}=\ell_{k+1}+D_{k+1}\ge D_{k+1}$. Since $\partial_2\Phi_k\le0$, every remaining update $t_k<t\le n$ satisfies
\[
 x_t^{(k+1)}\ge\left(1-\frac{h_t}{\kappa}\right)x_{t-1}^{(k+1)}.
\]
The remaining tail has mass less than $S_0=\varepsilon\kappa$, so
\[
 x_n^{(k+1)}\ge(1-\varepsilon)D_{k+1}
 \ge(1-\varepsilon)\exp[-CB\log(m+1)].
\]
Since $\|x_0-x_*\|^2=1$ and $\|x_n-x_*\|^2\ge(x_n^{(k+1)})^2$, increasing $C$ and using $B\le1+n/m$ proves \eqref{eq:fixed-prefactor}. If $k=0$, the entire schedule has mass less than $S_0$, and the pure quadratic $F(x)=x^2/(2\kappa)$ gives the same conclusion.
\end{proof}

The factor $\exp[-C\log(m+1)]$ in \eqref{eq:fixed-prefactor} is independent of the horizon. Applying this bound to repetitions of the same schedule removes this factor and also yields a lower bound for relative function error.
\begin{lemma}[Removing the prefactor by repetition]\label{lem:repetition}
Under the hypotheses of \cref{prop:fixed}, let $m$ and $C$ be as there. For every integer $n\ge1$ and every schedule $H\in[0,\infty)^n$,
\begin{equation}\label{eq:fixed-no-prefactor}
 \min\{\Derr_{n,\kappa}(H),\Eerr_{n,\kappa}(H)\}
 \ge\exp\!\left[-\frac{Cn}{m}\log(m+1)\right].
\end{equation}
\end{lemma}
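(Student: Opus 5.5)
The plan is to exploit submultiplicativity of the worst-case distance error under repetition of the schedule, together with a separate argument converting distance to function error. Fix $H\in[0,\infty)^n$ and an integer $N\ge1$, and let $H^N$ be the concatenation of $N$ copies of $H$, of horizon $Nn$.

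\emph{Step 1: submultiplicativity for distance.} For any $F\in\Fclass_{1/\kappa,1}(\R^d)$ and any starting point, running $H^N$ means running $H$ $N$ times, each run restarting from the previous final iterate. Each run shrinks $\|x-x_*\|^2$ by a factor at most $\Derr_{n,\kappa}(H)$, by definition of the supremum. Hence
\[
 \Derr_{Nn,\kappa}(H^N)\le\Derr_{n,\kappa}(H)^N .
\]
Applying \cref{prop:fixed} to $H^N$ at horizon $Nn$ gives
\[
 \Derr_{n,\kappa}(H)^N\ge\exp\!\left[-C\left(1+\frac{Nn}{m}\right)\log(m+1)\right].
\]
We take $N$-th roots and let $N\to\infty$. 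The prefactor term $C\log(m+1)/N$ vanishes, leaving $\Derr_{n,\kappa}(H)\ge\exp[-Cn\log(m+1)/m]$.

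\emph{Step 2: the function error.} The same argument works, since the function error is also submultiplicative: $\Eerr_{Nn,\kappa}(H^N)\le\Eerr_{n,\kappa}(H)^N$. What remains is a prefactor-type lower bound $\Eerr_{Nn,\kappa}(H^N)\ge\exp[-C'(1+Nn/m)\log(m+1)]$, which a root limit then turns into \eqref{eq:fixed-no-prefactor}, possibly after enlarging $C$.

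For this I would use the hard function built in the proof of \cref{prop:fixed}, with $x_0=e_1$ and $x_*=0$.
\begin{itemize}[leftmargin=*]
\item The initial gap satisfies $F(x_0)-F(x_*)\le\frac12$, by $1$-smoothness and $\|x_0\|=1$.
\item The final gap satisfies $F(x_n)\ge\frac1{2\kappa}\|x_n\|^2\ge\frac1{2\kappa}(x_n^{(k+1)})^2$, because every $\Phi_i\ge0$.
\end{itemize}
Together these give $\Eerr\ge\kappa^{-1}\exp[-2C(1+Nn/m)\log(m+1)]$. The factor $\kappa^{-1}$ is independent of $N$, so it disappears in the $N$-th root limit, just as the prefactor did.

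\emph{Main obstacle.} The delicate point is rigor of the limit. The hypotheses of \cref{prop:fixed} must hold for every horizon, including the repeated schedules $H^N$. They do: that proposition is stated for all $n\ge1$ and all nonnegative schedules, with $m$ depending only on $\kappa$ and $\beta$. The constants also stay the same across $N$, so the limit is legitimate. Alternatively, one can avoid the extra $\kappa^{-1}$ loss altogether by comparing function error with distance via strong convexity and smoothness, $\frac1{2\kappa}\|x-x_*\|^2\le F(x)-F(x_*)\le\frac12\|x-x_*\|^2$. Either way the constant loss is absorbed when taking roots.
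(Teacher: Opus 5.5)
Your proposal is correct and is essentially the paper's proof: submultiplicativity of both worst-case errors under repetition, \cref{prop:fixed} applied at horizon $Kn$, and a $K$-th root limit, with the function error reduced to distance via $\Derr_{Kn,\kappa}(H^{[K]})\le\kappa\,\Eerr_{Kn,\kappa}(H^{[K]})$, which is your ``alternative'' route. Two small corrections: that comparison does not avoid the factor $\kappa^{-1}$ (it is exactly that factor, which the root removes), and it is the better route because it keeps the same $C$ the lemma asserts, whereas your hard-function route produces $2C$ by squaring an amplitude bound, even though the $C$ of \cref{prop:fixed} already controls $(x_n^{(k+1)})^2$.
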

\begin{proof}
Fix $n$ and $H$. For each integer $K\ge1$, let $H^{[K]}$ denote $K$ consecutive copies of $H$, a schedule of length $Kn$.

The definition of $\Derr_{n,\kappa}(H)$ bounds the squared-distance ratio from every initial point. Applying this bound at the beginning of each copy, on the same objective, and using \cref{prop:fixed} at horizon $Kn$ gives
\[
 \exp\!\left[-C\left(1+\frac{Kn}{m}\right)\log(m+1)\right]
 \le\Derr_{Kn,\kappa}(H^{[K]})
 \le\bigl(\Derr_{n,\kappa}(H)\bigr)^K.
\]
Taking $K$th roots and letting $K\to\infty$ yields
\[
 \Derr_{n,\kappa}(H)
 \ge\exp\!\left[-\frac{Cn}{m}\log(m+1)\right].
\]

The same argument bounds the function-error ratio of $K$ copies by $\bigl(\Eerr_{n,\kappa}(H)\bigr)^K$. Strong convexity and smoothness also give $\Derr_{Kn,\kappa}(H^{[K]})\le\kappa\Eerr_{Kn,\kappa}(H^{[K]})$. Therefore,
\[
 \begin{aligned}
 \exp\!\left[-C\left(1+\frac{Kn}{m}\right)\log(m+1)\right]
 &\le\Derr_{Kn,\kappa}(H^{[K]})\\
 &\le\kappa\Eerr_{Kn,\kappa}(H^{[K]})
 \le\kappa\bigl(\Eerr_{n,\kappa}(H)\bigr)^K.
 \end{aligned}
\]
Taking $K$th roots and letting $K\to\infty$ gives the same lower bound for $\Eerr_{n,\kappa}(H)$, since $\kappa^{1/K}\to1$.
\end{proof}
The hard function and its dimension may depend on $K$, as permitted by the supremum in \eqref{eq:distance-risk}; the schedule $H$ and parameters $\beta,m$ remain fixed.

\subsection{Choosing the Bending Parameter}\label{sec:bending-paramater-choice}
The explicit constants in \cref{lem:bending-local} satisfy
\begin{equation*}
 c_\beta=1+O(\beta),\qquad
 \log a_\beta=O(1/\beta),\qquad
 \nu(c_\beta)=1/\psil-O(\beta).
\end{equation*}
The first two estimates follow from the explicit formulas in \cref{app:local-bending}; the last follows by differentiating \eqref{eq:checkpoint-exponents} near $c=1$.
For sufficiently large $\kappa$, choose
\[
 \beta:=(\log\kappa)^{-1/2}\le1/4.
\]
Then $\kappa/a_\beta\to\infty$, so the hypotheses of \cref{prop:fixed} hold. Its block length satisfies
\begin{align}
 \log m
 &\ge\nu(c_\beta)(\log\kappa-\log a_\beta)-O(1)\notag\\
 &\ge\frac{\log\kappa}{\psil}-C\bigl(\beta\log\kappa+1/\beta\bigr)
 \ge\frac{\log\kappa}{\psil}-C'\sqrt{\log\kappa}.\label{eq:optimized-block}
\end{align}
The parameter is chosen from $\kappa$ alone and remains fixed during the repetition argument.

\begin{proof}[Proof of \cref{thm:main,cor:complexity}]
For the chosen $\beta$, \cref{lem:repetition} gives \eqref{eq:fixed-no-prefactor}.
By \eqref{eq:optimized-block} and $\log(m+1)\le C\log\kappa$, this yields \eqref{eq:main}.
If either worst-case error is at most $\delta$, taking logarithms gives \eqref{eq:main-complexity}.
\end{proof}

\section{Concluding Remarks}\label{sec:conclusion}
In this paper, we proved an almost tight iteration lower bound for GD with arbitrary nonnegative predetermined stepsizes on smooth strongly convex objectives, matching the silver upper bound up to a subpolynomial factor in $\kappa$.
Two open questions remain:
\begin{enumerate}
\item \textit{Can we reduce the subpolynomial factor $e^{C\sqrt{\log\kappa}}\log\kappa$, which separates our lower bound in \eqref{eq:main-complexity} from the silver upper bound?}
This factor comes from two parts of the proof: the repair estimates and the choice of the bending parameter.
If a repair considers $r$ indices, \cref{lem:repair} guarantees that the amplitude after the repair is at least $c_0(r+1)^{-4}$ times the amplitude before the repair.
Combining these bounds over all repairs produces the factor $\log(m+1)$ in \eqref{eq:fixed-no-prefactor}.
For bending, taking a smaller $\beta$ improves the exponent of $\kappa$ but increases the constant $a_\beta$, which limits the block length.
Balancing these effects as in \eqref{eq:optimized-block} gives the factor $e^{C\sqrt{\log\kappa}}$ in \eqref{eq:main-complexity}.

\item \textit{Can we extend our lower bound to predetermined schedules that may contain negative stepsizes?}
Our previous work~\citep{YeLiu2026} gives an $\Omega(n^{-1.6342})$ lower bound for GD with possibly negative predetermined stepsizes on smooth convex objectives, leaving a polynomial gap to the silver upper bound.
We believe that a more refined construction and analysis can yield lower bounds matching the silver upper bounds up to subpolynomial factors in both the smooth convex and smooth strongly convex settings, even when negative stepsizes are allowed.

\end{enumerate}

\section*{AI Disclosure}
This work was driven by the human authors' conceptual direction. ChatGPT-6 Astra was utilized as an interactive assistant to execute instructions, work out technical details, and refine arguments through iterative feedback.
After digesting the arguments in detail, the authors reorganized and rewrote the presentation. They take full responsibility for the correctness and originality of all content.
\bibliographystyle{plainnat}
\begingroup
\color{black}
\hypersetup{urlcolor=black}
\setlength{\bibsep}{6pt plus 1pt minus 1pt}
\bibliography{references}
\endgroup

\clearpage
\appendix
\crefalias{section}{appendix}
\section{Proofs for the Local Hard Functions}
\label{app:local}

In this section, we prove \cref{lem:bending-local,lem:huber-local,lem:bridge}
by constructing $\Phi_i$ and the next threshold $\ell_{i+1}$
from the given $\ell_i,D_i$ and gap stepsizes.
The construction ensures \eqref{eq:local-transfer-states},
with $D_{i+1}$ determined by the checkpoint update.
We bound $D_{i+1}/D_i$ from below to retain amplitude
and $\ell_{i+1}/D_{i+1}$ from above to limit quadratic
contraction loss in the next gap.

We first recall a property of Moreau envelopes of support
functions used in all three constructions.

\begin{fact}[Moreau envelopes of support functions]\label{fact:local-moreau}
Let $K\subset\R^2$ be compact and convex, with $0\in K$.
Write $\sigma_K(u)=\max_{g\in K}\langle g,u\rangle$ for its support function. Its Moreau envelope with parameter $1$ satisfies
\begin{equation}\label{eq:local-moreau}
 \begin{aligned}
 (\env_1\sigma_K)(x)
 &=\min_{y\in\R^2}\left\{\sigma_K(y)+\frac12\|x-y\|^2\right\}\\
 &=\max_{g\in K}\left\{\langle g,x\rangle-\frac12\|g\|^2\right\}.
 \end{aligned}
\end{equation}
This envelope is nonnegative, convex, and $1$-smooth, and
\begin{equation*}
 \nabla(\env_1\sigma_K)(x)=\Pi_K(x).
\end{equation*}
Here $\Pi_K$ denotes Euclidean projection onto $K$. For $g\in K$, the equality $g=\Pi_K(x)$ holds if and only if
\begin{equation}\label{eq:local-normal-certificate}
 \langle x-g,w-g\rangle\le0\qquad\text{for every }w\in K.
\end{equation}
\end{fact}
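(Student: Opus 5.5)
The plan is to prove the Fact by standard conjugate duality and projection arguments; no earlier result of the paper is needed.

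\emph{Step 1: the envelope identity.} Write $\sigma_K(y)=\max_{g\in K}\langle g,y\rangle$ inside the minimization. This gives
\[
 \min_{y}\max_{g\in K}\Bigl\{\langle g,y\rangle+\tfrac12\|x-y\|^2\Bigr\}.
\]
The integrand is convex in $y$, linear in $g$, and $K$ is compact and convex, so Sion's minimax theorem lets us exchange $\min$ and $\max$. For fixed $g$ the inner minimum is attained at $y=x-g$. Its value is
\[
 \langle g,x-g\rangle+\tfrac12\|g\|^2=\langle g,x\rangle-\tfrac12\|g\|^2 .
\]
This proves \eqref{eq:local-moreau}.

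\emph{Step 2: nonnegativity, convexity, and the projection formula.} Nonnegativity follows by taking $g=0\in K$. Convexity holds because the envelope is a pointwise maximum of affine functions of $x$. Next, complete the square:
\[
 \langle g,x\rangle-\tfrac12\|g\|^2=\tfrac12\|x\|^2-\tfrac12\|x-g\|^2 .
\]
The maximizer over $K$ is therefore unique and equals $\Pi_K(x)$. The envelope can be written as $\tfrac12\|x\|^2-\tfrac12\operatorname{dist}_K(x)^2$.

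The standard fact $\nabla\tfrac12\operatorname{dist}_K^2(x)=x-\Pi_K(x)$ then gives $\nabla(\env_1\sigma_K)(x)=\Pi_K(x)$. Alternatively, Danskin's theorem applies because the maximizer is unique.

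\emph{Step 3: $1$-smoothness.} The projection $\Pi_K$ is nonexpansive, hence $1$-Lipschitz. So the gradient is $1$-Lipschitz, which is the required $1$-smoothness.

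\emph{Step 4: the variational characterization \eqref{eq:local-normal-certificate}.} For $g\in K$, $g=\Pi_K(x)$ means that $g$ minimizes the convex function $\phi(w)=\tfrac12\|x-w\|^2$ over $K$.

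Necessity: take $w\in K$ and $\tau\in(0,1]$. By convexity $g+\tau(w-g)\in K$. Expanding $\phi$ there and letting $\tau\to0$ gives $\langle x-g,w-g\rangle\le0$.

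Sufficiency: expand
\[
 \|x-w\|^2=\|x-g\|^2-2\langle x-g,w-g\rangle+\|w-g\|^2\ge\|x-g\|^2 .
\]

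The only mildly delicate point is justifying the min–max exchange in Step 1. To avoid invoking Sion, one can instead verify the identity directly. Set $g^*=\Pi_K(x)$ and $y^*=x-g^*$. By \eqref{eq:local-normal-certificate}, $\sigma_K(y^*)=\langle g^*,y^*\rangle$, so the primal value at $y^*$ equals the dual value at $g^*$. Weak duality ($\min\max\ge\max\min$) gives the reverse inequality. With this route, Step 4 is proved first and then used in Step 1.
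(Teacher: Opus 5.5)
Your proof is correct. Sion applies because the maximization variable ranges over the compact convex set $K$ and the integrand is linear in $g$ and continuous and convex in $y$. The complete-the-square identity correctly identifies the unique maximizer as $\Pi_K(x)$, and both directions of the variational inequality are right.

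The paper gives no argument of its own. It cites Moreau's 1965 propositions, with Rockafellar--Wets and Parikh--Boyd as background. The route implicit there is the general Moreau theory: $\env_1 f+\env_1 f^*=\tfrac12\|\cdot\|^2$ and $\nabla\env_1 f=\mathrm{Id}-\mathrm{prox}_f=\mathrm{prox}_{f^*}$, specialized to $f=\sigma_K$ and $f^*=\iota_K$, the indicator of $K$.

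Your argument is an elementary unpacking of exactly this special case. Step~1 is the conjugacy $\sigma_K^*=\iota_K$. Your formula $\env_1\sigma_K=\tfrac12\|x\|^2-\tfrac12\mathrm{dist}_K(x)^2$ is the Moreau decomposition with $\env_1\iota_K=\tfrac12\mathrm{dist}_K^2$. The citation buys brevity and generality (any closed proper convex $f$). Your version buys self-containedness, which is all the paper needs.

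Of your two options for Step~1, the certificate-based one is better. It uses only Step~4, which does not depend on Step~1, so there is no circularity, and it avoids Sion. It also exhibits the primal minimizer $y^*=x-\Pi_K(x)$, which is what justifies writing $\min$ rather than $\inf$ in \eqref{eq:local-moreau}. If you keep the Sion route, add one line: the $y$-objective is continuous and coercive (since $\sigma_K\ge0$), so its infimum is attained.

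For the gradient formula, Danskin's theorem (compact $K$, unique maximizer) is the cleaner justification. The ``standard fact'' $\nabla\tfrac12\mathrm{dist}_K^2=\mathrm{Id}-\Pi_K$ is usually proved from the same variational inequality, or from the Moreau theory you are bypassing.
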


The envelope identities follow from \citet[Propositions 5.b, 7.b, and 7.d]{moreau1965proximite}. For background, see \citet[Sections 1.G and 2.D]{RockafellarWets1998} and \citet[Sections 3.1 and 6.4.2]{ParikhBoyd2014}.

\subsection{\texorpdfstring{Proof of \cref{lem:bending-local}}{Proof of Lemma \getrefnumber{lem:bending-local}}}
\label{app:local-bending}

\paragraph{Proof idea.}
We adapt the construction of \citet[Lemmas 3.1 and 3.2]{ye2026silver} to include quadratic contraction.
First, \cref{lem:bending-gap} constructs the gap trajectory and its gradient vectors.
We then match the initial point $(\ell_i+D_i,0)$ and define $\Phi_i$ with these gradients.
The checkpoint update gives the two ratio bounds in \cref{lem:bending-local}.

\paragraph{Constructing the circular arc for gradient directions.}
We use a circular arc to turn the gradient toward the negative $Y$-direction.
Fix $\ell_i,D_i,\kappa,\beta$ and the gap stepsizes as in \cref{lem:bending-local}. Set
\[
 R=1+\beta,\qquad c_\beta=\beta+\sqrt{1+2\beta},\qquad
 p(q)=\beta+\sqrt{R^2-(q+\beta)^2}\quad(0\le q\le1),
\]
and let $K=\conv\bigl(\{0\}\cup\{(p(q),-q):0\le q\le1\}\bigr)$.
The arc has center $(\beta,\beta)$, radius $R$, and endpoints $(c_\beta,0)$ and $(\beta,-1)$.
As $q$ increases, $p(q)$ decreases from $c_\beta<2$ to $\beta$.
The disk contains $0$ and hence $K$, so the radius vector $(p(q)-\beta,-q-\beta)$ is an outward normal to $K$ at $(p(q),-q)$.

\paragraph{Constructing the gap trajectory.}
For parameters \(\varrho>0\) and \(y_0\ge1\), to be chosen, define the translated and rescaled coordinates by
\[
r_j=(u_j,-y_j)
=\frac{(X_j,Y_j)-(\ell_i+\varrho y_0/\beta,\varrho y_0)}{\varrho}.
\]
The outgoing threshold will be \(\ell_{i+1}=\varrho y_0\).
We select arc points $v_j=(p_j,-q_j)$, where $p_j=p(q_j)$, as the intended gradients divided by $\varrho$.
Substituting these coordinates and gradients $\varrho v_{j-1}$ into \eqref{eq:local-dynamics} gives the required relation
\begin{equation}\label{eq:bending-gap-update}
 r_j=\left(1-\frac{h_j}{\kappa}\right)r_{j-1}
       -\frac{h_j}{\kappa}(\ell_i/\varrho+y_0/\beta,y_0)
       -\frac{(1-1/\kappa)h_j}{2}v_{j-1}
       \quad(1\le j\le m_{\mathrm{gap}}).
\end{equation}
We prescribe $r_{m_{\mathrm{gap}}}=v_{m_{\mathrm{gap}}}=(\beta,-1)$.
This will put $Y_{m_{\mathrm{gap}}}$ a distance $\varrho$ below $\ell_{i+1}$, with $\partial_2\Phi_i=-\varrho$ at this point for the checkpoint update.

\begin{lemma}[Gap trajectory]\label{lem:bending-gap}
There is $a_\beta\ge8$, depending only on $\beta$, with $\log a_\beta=O(1/\beta)$, such that for every $\varrho>0$ there exist points $r_j$ and arc vectors $v_j$ satisfying \eqref{eq:bending-gap-update}, with $r_{m_{\mathrm{gap}}}=v_{m_{\mathrm{gap}}}=(\beta,-1)$ and
\[
 \Pi_K(r_j)=v_j,\qquad u_j\ge\beta,\qquad
 1\le y_j\le y_0\le1+\frac{1-1/\kappa}{2}s_i
 \quad(0\le j\le m_{\mathrm{gap}}).
\]
Moreover, $\beta+y_0/\beta\le a_\beta/4$, and $y_0,u_0$ can be chosen continuously in $\varrho>0$.\footnote{Here $\ell_i,\kappa,\beta$ and the gap stepsizes are fixed. The selected $y_0,u_0$ are functions of $\varrho>0$: small changes in $\varrho$ cause small changes in both values.}
\end{lemma}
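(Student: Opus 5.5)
The plan is to build the trajectory backwards from the prescribed endpoint $r_{m_{\mathrm{gap}}}=v_{m_{\mathrm{gap}}}=(\beta,-1)$. The free parameter $y_0$ is then tuned so that the initial point $r_0$ matches the translated start $(X_0,Y_0)=(\ell_i+D_i,0)$ in its second coordinate, i.e.\ $-y_0=(0-\varrho y_0)/\varrho$. This matching is automatic from the normalization, so the actual constraint is that $r_0$ has second coordinate $-y_0$.

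Write \eqref{eq:bending-gap-update} in the form
\[
 r_j+\tfrac{h_j}{\kappa}(\cdot)=\cdots,
\]
and invert it step by step: given $r_j$ and the arc point $v_{j-1}$, it determines $r_{j-1}$, since $1-h_j/\kappa\ge 15/16>0$. The arc points are chosen in a feedback fashion. Each $v_{j-1}$ must equal $\Pi_K(r_{j-1})$. By \eqref{eq:local-normal-certificate} and the outward-normal remark, this holds whenever $r_{j-1}$ lies on the ray from $v_{j-1}$ in the radial direction $(p(q)-\beta,-q-\beta)$, as in \citet{ye2026silver}. So I will parametrize $r_{j-1}=v(q_{j-1})+\lambda_{j-1}(v(q_{j-1})-(\beta,\beta))$ with $\lambda_{j-1}\ge0$, substitute into the backward recurrence, and solve the resulting two scalar equations for $(q_{j-1},\lambda_{j-1})$.

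Without the quadratic terms this is exactly the two-dimensional recursion of \citet[Lemma~3.1]{ye2026silver}. There, $y_j$ increases by roughly $h_jq/2\le h_j/2$ going backward, which yields $y_0\le1+s_i/2$. The contraction terms $-(h_j/\kappa)(r_{j-1}+(\ell_i/\varrho+y_0/\beta,y_0))$ are a perturbation of relative size $h_j/\kappa$. Two facts make this work. First, the second-coordinate part $-(h_j/\kappa)(-y_{j-1}+y_0)$ is $\le0$ in $y$, since $y_{j-1}\le y_0$, so it only decreases $y$ going backward and preserves $y_j\le y_0$. Second, the first-coordinate part pushes $u$ leftward going forward, equivalently rightward going backward, which keeps $u_j\ge\beta$.

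The constant $a_\beta$ comes from bounding the total turning. The $q_j$ can approach $0$ only at a geometric rate whose ratio is governed by the arc curvature $\sim1/\beta$. For large $s_i$ the trajectory instead saturates at $q\approx0$, where the gradient $\approx(c_\beta,0)$ leaves $y$ essentially unchanged. Consequently $y_0$, and hence $\beta+y_0/\beta$, stays bounded by an expression like $e^{O(1/\beta)}$ plus the linear term already accounted for in \eqref{eq:bending-local-bound}. I will define $a_\beta$ as $4$ times this bound.

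Because $y_0$ also enters the update through $(\ell_i/\varrho+y_0/\beta,y_0)$, I will treat the construction as a fixed point in $y_0$. For a trial value $\tilde y_0$, run the backward recursion to obtain $y_0^{\mathrm{out}}(\tilde y_0)$. This map is continuous and maps $[1,1+(1-1/\kappa)s_i/2]$ into itself by the monotonicity above, so the intermediate value theorem gives a fixed point. The same argument, carried out with $\varrho$ as a parameter, gives continuity in $\varrho$, choosing for instance the largest fixed point.

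The $u_0$ coordinate is left free; it is matched to $D_i$ later through the choice of $\varrho$ in the main proof. The main obstacle is the quantitative control showing $y_0$ stays below the stated bound and $\beta+y_0/\beta\le a_\beta/4$ with $\log a_\beta=O(1/\beta)$. Equivalently, the number of steps spent turning is not what matters: the turning cost is absorbed, and only a bounded additive constant $e^{O(1/\beta)}$ remains. For this I would import the potential-function estimate of \citet[Lemma~3.2]{ye2026silver} and check that the $h_j/\kappa$ perturbations only help the relevant monotone quantities.
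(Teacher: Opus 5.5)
Your outline has the same architecture as the paper's proof: a backward construction from $(\beta,-1)$, the projection certified by radial position, a self-consistency condition for $y_0$, and the potential estimate of \citet{ye2026silver}. However, the construction step fails as you set it up. Write the backward step as $r_{j-1}=\widehat r_j+\alpha_jv_{j-1}$, with $\widehat r_j=\bigl(r_j+\frac{h_j}{\kappa}(\ell_i/\varrho+y_0/\beta,y_0)\bigr)/(1-h_j/\kappa)$ and $\alpha_j=\frac{(1-1/\kappa)h_j}{2(1-h_j/\kappa)}$. Since $u_{j-1}\ge u_j+\alpha_j\beta$, your ansatz that every $r_j$ lies on a radius through an arc point (slope at least $\beta/(c_\beta-\beta)$) would force $y_0+\beta\ge\frac{\beta^2(1-1/\kappa)}{2(c_\beta-\beta)}s_i$. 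This contradicts the $s_i$-free bound $\beta+y_0/\beta\le a_\beta/4$. So for long gaps, your two scalar equations must at some step have no solution with $q\in[0,1]$.

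The missing case is the vertex: $q_{j-1}=0$, with $r_{j-1}-(c_\beta,0)$ in the normal cone spanned by $(c_\beta-\beta,-\beta)$ and $(0,1)$. The paper encodes this as $\frac{q_{j-1}+\beta}{p_{j-1}-\beta}=\max\bigl\{\frac{\beta}{c_\beta-\beta},\frac{\Delta_j}{\widehat u_j-(1-\alpha_j)\beta}\bigr\}$ with $\Delta_j=\widehat y_j+(1-\alpha_j)\beta$. This exact saturation, not ``$q\approx0$'', is what keeps $y_0$ bounded independently of $s_i$.

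You also never verify $\lambda_j\ge0$, i.e., that $r_j$ lies at or beyond $v_j$ on its ray. The paper deduces this from $y_j\ge1\ge q_j$. In turn, $y_j\ge1$ requires the monotonicity $q_{j-1}\le q_j$, which follows from $(1-h_j/\kappa)\Delta_j\le y_j+\beta$ and $(1-h_j/\kappa)[\widehat u_j-(1-\alpha_j)\beta]\ge u_j-\beta$. It also requires the forward identity $y_0-y_j=(1-h_j/\kappa)(y_0-y_{j-1})+\frac{h_j}{\kappa}\frac{\kappa-1}{2}q_{j-1}$. Your sign remark does not supply this. Compared with the unperturbed recursion, contraction lowers $y$ going backward, which works against $y_j\ge1$. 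Moreover, $y_j\le y_0$ holds only for the self-consistent $y_0$ (via the same identity), not along the backward recursion for a trial value.

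Two further steps need repair. First, ``choosing the largest fixed point'' does not give continuity in $\varrho$. The largest zero of a continuous one-parameter family is only upper semicontinuous and can jump. Continuity is exactly what Step~1 of the proof of \cref{lem:bending-local} needs to match $X_0=\ell_i+D_i$ by the intermediate value theorem. The missing ingredient is uniqueness. As the trial $y_0$ increases, backward induction makes $\widehat u_j$ nondecreasing and $\widehat y_j,q_{j-1}$ nonincreasing. Hence $1+\frac{1-1/\kappa}{2}\sum_jh_jq_{j-1}\prod_{r>j}(1-h_r/\kappa)$ is nonincreasing while the trial value increases strictly. The unique solution in $[1,1+\frac{1-1/\kappa}{2}s_i]$ then varies continuously with $\varrho$. (Your output map need not send this interval into itself; what is true is that the displayed expression minus the trial value changes sign there.)

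Second, the $s_i$-free bound on $y_0$ is the heart of the lemma, and you only assert it by citation. Your phrase ``plus the linear term'' would even make $a_\beta$ depend on $s_i$. That would put a coefficient of order $1/\beta$ on $s_i$ in \eqref{eq:bending-local-bound} and destroy $c_\beta\approx1$. The paper proves the bound with the potential $(y_j+\beta)/(q_j+\beta)$. The gradient terms cancel in $p_{j-1}y_{j-1}-q_{j-1}u_{j-1}=p_{j-1}\widehat y_j-q_{j-1}\widehat u_j\le p_{j-1}y_j-q_{j-1}u_j$; this is exactly where the perturbation helps, through $\widehat u_j\ge u_j$ and $\widehat y_j\le y_j$. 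The result is a per-step factor $\exp[\frac1\beta\int_{q_{j-1}}^{q_j}\max\{1,-p'(t)\}\,dt]$, equal to $1$ on the saturated stretch. It telescopes to $y_0+\beta\le\beta\exp[(\sqrt2(1+\beta)-\beta)/\beta]$.
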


\begin{proof}
If $h_j=0$, set $r_{j-1}=r_j$ and $v_{j-1}=v_j$; the point and gradient are unchanged by this update.
The maximum rule below is used only when $h_j>0$.
For an empty gap, take $r_0=v_0=(\beta,-1)$ and $y_0=1$.

\medskip\noindent
\emph{The backward construction.}
For a nonempty gap, fix $\varrho>0$ and temporarily choose $y_0\ge1$.
Starting from $u_{m_{\mathrm{gap}}}=\beta$ and $y_{m_{\mathrm{gap}}}=q_{m_{\mathrm{gap}}}=1$, compute $q_{j-1},u_{j-1},y_{j-1}$ for $j=m_{\mathrm{gap}},\ldots,2$.
At $j=1$, compute $q_0,u_0$, leaving the second-coordinate equation as a condition on the trial $y_0$.

Rearranging \eqref{eq:bending-gap-update}, we write $r_{j-1}=\widehat r_j+\alpha_jv_{j-1}$, where
\[
 \widehat r_j=(\widehat u_j,-\widehat y_j)
 =\frac{r_j+(h_j/\kappa)(\ell_i/\varrho+y_0/\beta,y_0)}{1-h_j/\kappa},
 \qquad
 \alpha_j=\frac{(1-1/\kappa)h_j}{2(1-h_j/\kappa)}.
\]
Set $\Delta_j=\widehat y_j+(1-\alpha_j)\beta$.
We choose $v_{j-1}$ so that $\Pi_K(r_{j-1})=v_{j-1}$ also holds.
For an interior arc point, \eqref{eq:local-normal-certificate} requires $r_{j-1}-v_{j-1}$ to be parallel to the radius vector of the circle.
By the remaining update, this is equivalent to collinearity of the center $(\beta,\beta)$, $v_{j-1}$, and $\widehat r_j+\alpha_j(\beta,\beta)$, as shown in \Cref{fig:bending-backward}.
Indeed, the update gives
\[
 r_{j-1}-[\widehat r_j+\alpha_j(\beta,\beta)]
 =\alpha_j[v_{j-1}-(\beta,\beta)].
\]
The two shaded triangles in \Cref{fig:bending-backward} are therefore related by scaling by $\alpha_j$ and translation by $\widehat r_j$.
The center's vertical coordinate minus that of $\widehat r_j+\alpha_j(\beta,\beta)$ is $\Delta_j$.
Equating the slopes, and including the endpoint $q_{j-1}=0$, gives
\begin{equation}\label{eq:bending-backward-rule}
 \frac{q_{j-1}+\beta}{p_{j-1}-\beta}
 =\max\left\{\frac{\beta}{c_\beta-\beta},
       \frac{\Delta_j}{\widehat u_j-(1-\alpha_j)\beta}\right\},
 \qquad r_{j-1}=\widehat r_j+\alpha_jv_{j-1}.
\end{equation}
Backward induction gives $u_j\ge\beta$, so the denominator is at least $\alpha_j\beta>0$.
The left-hand side increases continuously from $\beta/(c_\beta-\beta)$ to infinity as $q_{j-1}$ increases from $0$ to $1$.
Thus the rule determines a unique $q_{j-1}\in[0,1)$, continuously in $\widehat u_j,\widehat y_j$.

\begin{figure}[!ht]
\centering
\begingroup
\definecolor{backpurple}{HTML}{70418D}
\definecolor{backorange}{HTML}{BD720C}
\definecolor{backgray}{HTML}{555E68}
\begin{tikzpicture}[x=3.1cm,y=3.1cm,>=Latex,font=\small,text=black,
  backaxis/.style={->,draw=black!60,line width=.5pt},
  backedge/.style={draw=backpurple,line width=.85pt},
  backguide/.style={draw=black!35,dashed,line width=.5pt}]
\coordinate (backO) at (0,0);
\coordinate (backCenter) at (.25,.25);
\coordinate (backV) at (.848511017749858,-.847398998373895);
\coordinate (backVcurrent) at (.379103967613579,-.993314990477647);
\coordinate (backCurrent) at (.404359343047925,-1.236532820634866);
\coordinate (backStar) at (.649484579187710,-1.196483617257013);
\coordinate (backShiftedCenter) at (.901468706171837,-.944499490272886);
\coordinate (backPrevious) at (1.504729811364154,-2.050608004506892);
\coordinate (backCorner) at (1.504729811364154,-1.196483617257013);
\coordinate (backC) at (1.474744871391589,0);
\coordinate (backEnd) at (.25,-1);
\coordinate (backRayEnd) at ($(backCenter)!1.055!(backPrevious)$);
\coordinate (backCurrentRayEnd) at ($(backCenter)!1.075!(backCurrent)$);
\coordinate (backDeltaTop) at (-.36,.25);
\coordinate (backDeltaBottom) at (-.36,-.944499490272886);

\fill[backpurple!12] (backO)--(backCenter)--(backV)--cycle;
\fill[backpurple!12] (backStar)--(backShiftedCenter)--(backPrevious)--cycle;
\draw[backguide] (backCenter)--(backC)
  node[pos=.61,above=2pt,inner sep=1pt] {$R$};
\draw[backguide] (backCenter)--(backCurrentRayEnd);
\draw[backguide] (backCenter)--(backRayEnd);
\draw[backpurple,dashed,line width=.65pt] (backCenter)--(backV);
\draw[backpurple,dashed,line width=.65pt] (backShiftedCenter)--(backPrevious);
\draw[backguide] (backStar)--(backCorner)--(backPrevious);

\draw[backguide] (backStar)--(backStar |- 0,-2.30);
\draw[backguide] (backPrevious)--(backPrevious |- 0,-2.30);
\draw[draw=black!55,decorate,decoration={brace,amplitude=4pt}]
  (backPrevious |- 0,-2.30)--(backStar |- 0,-2.30)
  node[midway,below=7pt] {$\alpha_jp_{j-1}$};
\draw[backguide] (backCorner)--($(backCorner)+(.15,0)$);
\draw[backguide] (backPrevious)--($(backPrevious)+(.15,0)$);
\draw[draw=black!55,decorate,decoration={brace,amplitude=4pt}]
  ($(backCorner)+(.15,0)$)--($(backPrevious)+(.15,0)$)
  node[midway,right=7pt] {$\alpha_jq_{j-1}$};
\draw[draw=black!45,line width=.45pt]
  ($(backCorner)+(-.065,0)$)--($(backCorner)+(-.065,-.065)$)--($(backCorner)+(0,-.065)$);

\draw[backguide] (backDeltaTop)--(backCenter);
\draw[backguide] (backDeltaBottom)--(backShiftedCenter);
\draw[draw=black!55,decorate,decoration={brace,amplitude=4pt}]
  (backDeltaBottom)--(backDeltaTop)
  node[midway,left=7pt] {$\Delta_j$};

\draw[backedge] (backO)--(backCenter);
\draw[backedge] (backStar)--(backShiftedCenter);
\draw[backedge] (backO)--(backV);
\draw[backgray,line width=1.05pt,-{Latex[length=4pt,width=3.7pt]}]
  (backCurrent)--(backStar);
\draw[backorange,line width=1.15pt,-{Latex[length=5pt,width=4pt]}]
  (backStar)--(backPrevious)
  node[pos=.53,sloped,below=4pt,inner sep=1pt,text=backorange] {$\alpha_jv_{j-1}$};
\draw[backpurple,line width=.95pt]
  plot[domain=11.536959033:90,samples=90,variable=\t]
  ({.25+1.25*cos(\t)},{.25-1.25*sin(\t)});
\begin{scope}[shift={(backCenter)}]
  \draw[backpurple,line width=.55pt] (225:.13)
    arc[start angle=225,end angle=298.607632658,radius=.13];
\end{scope}
\begin{scope}[shift={(backShiftedCenter)}]
  \draw[backpurple,line width=.55pt] (225:.131031746)
    arc[start angle=225,end angle=298.607632658,radius=.131031746];
\end{scope}

\draw[backaxis] (-.13,0)--(2.0,0) node[right] {$u$};
\draw[backaxis] (0,-2.46)--(0,.56) node[above] {$-y$};
\node[below left=1pt] at (backO) {$0$};
\fill[black!65] (backCenter) circle (1.5pt)
  node[above right=2pt,inner sep=1pt] {$(\beta,\beta)$};
\fill[backpurple] (backC) circle (1.3pt)
  node[above right=2pt,inner sep=1pt] {$(c_\beta,0)$};
\fill[backpurple] (backEnd) circle (1.3pt)
  node[below=4pt,xshift=-7pt,inner sep=.8pt] {$(\beta,-1)$};
\fill[backpurple] (backV) circle (1.6pt)
  node[right=-6pt,yshift=10pt,inner sep=.8pt] {$v_{j-1}$};
\fill[backpurple] (backVcurrent) circle (1.5pt)
  node[below right=3pt,xshift=1pt,inner sep=.8pt] {$v_j$};
\filldraw[fill=white,draw=backpurple,line width=.65pt]
  (backShiftedCenter) circle (1.8pt)
  node[right=6pt,yshift=-1pt,inner sep=1pt,text=backpurple] {$\widehat r_j+\alpha_j(\beta,\beta)$};
\fill[backgray] (backCurrent) circle (1.5pt)
  node[below left=3pt,inner sep=1pt] {$r_j$};
\fill[backgray] (backStar) circle (1.5pt)
  node[anchor=north east,xshift=0pt,yshift=-3pt,inner sep=.8pt] {$\widehat r_j$};
\fill[backpurple] (backPrevious) circle (1.6pt)
  node[below right=3pt,inner sep=1pt] {$r_{j-1}$};
\end{tikzpicture}
\endgroup
\caption{A backward step with $q_{j-1}>0$. The gray arrow maps $r_j$ to $\widehat r_j$. The orange arrow adds $\alpha_jv_{j-1}$. The dashed rays show $\Pi_K(r_j)=v_j$ and $\Pi_K(r_{j-1})=v_{j-1}$.}
\label{fig:bending-backward}
\end{figure}

\medskip\noindent
\emph{Choosing $y_0$ to satisfy the initial update.}
At $j=1$, we still need $y_0=\widehat y_1+\alpha_1q_0$ (or $y_0=y_1$ if $h_1=0$).
Expanding the second coordinate of \eqref{eq:bending-gap-update} from $y_{m_{\mathrm{gap}}}=1$ gives the equivalent condition
\[
 y_0=1+\frac{1-1/\kappa}{2}\sum_{j=1}^{m_{\mathrm{gap}}}h_jq_{j-1}
       \prod_{r=j+1}^{m_{\mathrm{gap}}}(1-h_r/\kappa).
\]
Each $q_{j-1}$ in this sum is computed using the trial $y_0$.
As the trial increases, backward induction gives nondecreasing $\widehat u_j$ and nonincreasing $\widehat y_j,q_{j-1}$.
The weighted sum is therefore continuous and nonincreasing in the trial $y_0$.
The left side increases strictly, and the right side lies in $[1,1+(1-1/\kappa)s_i/2]$.
Consequently, exactly one $y_0$ in this interval makes the two sides equal, so the second-coordinate equation also holds at $j=1$.
The recursion is jointly continuous in $y_0,\varrho>0$; uniqueness in this fixed compact interval also makes the selected $y_0,u_0$ continuous in $\varrho$.

\medskip\noindent
\emph{Verifying $\Pi_K(r_j)=v_j$.}
For an interior arc point, we must also check that $r_j$ lies at or beyond $v_j$ on the ray from $(\beta,\beta)$.
We first show $q_{j-1}\le q_j$.
This holds immediately if $h_j=0$ or $q_j=1$; otherwise, \eqref{eq:bending-backward-rule} gives
$y_j+\beta\le(q_j+\beta)(u_j-\beta)/(p_j-\beta)$, and
\[
 \left(1-\frac{h_j}{\kappa}\right)\Delta_j\le y_j+\beta,
 \qquad
 \left(1-\frac{h_j}{\kappa}\right)
       [\widehat u_j-(1-\alpha_j)\beta]\ge u_j-\beta.
\]
Both entries of the maximum in \eqref{eq:bending-backward-rule} are therefore at most $(q_j+\beta)/(p_j-\beta)$, proving $q_{j-1}\le q_j$.

The vertical update is the convex combination
\[
 y_0-y_j=\left(1-\frac{h_j}{\kappa}\right)(y_0-y_{j-1})
       +\frac{h_j}{\kappa}\frac{\kappa-1}{2}q_{j-1}.
\]
Since the $q_{j-1}$ are nondecreasing, induction gives $y_0-y_{j-1}\le(\kappa-1)q_{j-1}/2$.
Thus $y_0-y_j$ is nondecreasing from $0$ to $y_0-1$, giving $1\le y_j\le y_0$.
For $0<q_j<1$, the constructed points $r_j,v_j$ lie on a ray from $(\beta,\beta)$; $y_j\ge1\ge q_j$ puts $r_j$ at or beyond $v_j$.
Hence $r_j-v_j$ is an outward normal to $K$ at $v_j$.
At $q_j=0$, the selection rule gives $y_j\le\beta(u_j-c_\beta)/(c_\beta-\beta)$, so $r_j-(c_\beta,0)$ is a nonnegative combination of the outward normals $(c_\beta-\beta,-\beta)$ and $(0,1)$.
If $q_j=1$, then $r_j=v_j=(\beta,-1)$.
In all cases, \eqref{eq:local-normal-certificate} gives $\Pi_K(r_j)=v_j$.

\medskip\noindent
\emph{Bounding $y_0$ in terms of $\beta$.}
In addition to the bound in terms of $s_i$, we need a bound depending only on $\beta$ to obtain the lower bound on $\varrho/D_i$ in \eqref{eq:bending-rho}.
For $q_j>0$, the collinearity of $(\beta,\beta),v_j,r_j$ gives
\[
 u_j-\beta=\frac{p_j-\beta}{q_j+\beta}(y_j+\beta).
\]
Also, $\widehat u_j\ge u_j$ and $\widehat y_j\le y_j$.
Using $r_{j-1}=\widehat r_j+\alpha_jv_{j-1}$ from \eqref{eq:bending-backward-rule}, the gradient terms cancel:
\[
 p_{j-1}y_{j-1}-q_{j-1}u_{j-1}
 =p_{j-1}\widehat y_j-q_{j-1}\widehat u_j
 \le p_{j-1}y_j-q_{j-1}u_j.
\]
For $q_{j-1}>0$, use the displayed formula for $u_j-\beta$ at indices $j-1$ and $j$ to obtain
\begin{equation}\label{eq:bending-height-step}
 \begin{aligned}
 \frac{y_{j-1}+\beta}{q_{j-1}+\beta}
 &\le\frac{y_j+\beta}{q_j+\beta}
       \left[1+\frac{p_{j-1}q_j-q_{j-1}p_j}
                      {\beta(p_{j-1}+q_{j-1})}\right]\\
 &\le\frac{y_j+\beta}{q_j+\beta}
       \exp\!\left[\frac1\beta
             \int_{q_{j-1}}^{q_j}\max\{1,-p'(t)\}\,dt\right].
 \end{aligned}
\end{equation}
For $q_{j-1}=0$, the first inequality follows from $y_{j-1}=\widehat y_j\le y_j$.
For $h_j=0$, both inequalities are equalities.
For the exponential bound, use $1+x\le e^x$ and
$p_{j-1}q_j-q_{j-1}p_j=\int_{q_{j-1}}^{q_j}[p_{j-1}-q_{j-1}p'(t)]\,dt$.
The integrand divided by $p_{j-1}+q_{j-1}$ is a weighted average of $1$ and $-p'(t)$, so
\[
 \frac{p_{j-1}-q_{j-1}p'(t)}{p_{j-1}+q_{j-1}}
 \le\max\{1,-p'(t)\}.
\]

Applying \eqref{eq:bending-height-step} for $j=m_{\mathrm{gap}},\ldots,1$ multiplies the exponential factors, whose integrals add from $q_0$ to $q_{m_{\mathrm{gap}}}=1$.
Since $y_{m_{\mathrm{gap}}}=q_{m_{\mathrm{gap}}}=1$, we obtain
\[
 \begin{aligned}
 y_0+\beta
 &\le(q_0+\beta)\exp\!\left[\frac1\beta
                         \int_{q_0}^1\max\{1,-p'(t)\}\,dt\right]\\
 &\le\beta\exp\!\left[\frac1\beta
                         \int_0^1\max\{1,-p'(t)\}\,dt\right].
 \end{aligned}
\]
The second inequality uses $\log(1+q_0/\beta)\le q_0/\beta$ and $\max\{1,-p'(t)\}\ge1$.
Since $p'(t)=-(t+\beta)/(p(t)-\beta)$ for $t<1$, the two entries of the maximum agree at $t=R/\sqrt{2}-\beta$.
Integrating $1$ below this point and $-p'(t)$ above it gives
\[
 \int_0^1\max\{1,-p'(t)\}\,dt
 =\left(\frac{R}{\sqrt{2}}-\beta\right)
   +p\!\left(\frac{R}{\sqrt{2}}-\beta\right)-p(1)
 =\sqrt{2}R-\beta.
\]
Hence $y_0+\beta\le\beta\exp[(\sqrt{2}R-\beta)/\beta]$.
Choose
\[
 a_\beta=8+4\beta+4\exp\!\left[\frac{\sqrt{2}R-\beta}{\beta}\right].
\]
Then $\beta+y_0/\beta\le a_\beta/4$, $a_\beta\ge8$, and $\log a_\beta=O(1/\beta)$.
\end{proof}

We now use \cref{lem:bending-gap} to prove \cref{lem:bending-local}.

\begin{proof}[Proof of \cref{lem:bending-local}]
\leavevmode\par\nobreak
\paragraph{Step 1. Matching the initial point.}
Since $Y_0=0$, it remains to choose $\varrho$ so that $\varrho(u_0+y_0/\beta)=D_i$.
Expanding the first coordinate of \eqref{eq:bending-gap-update} and using $0\le p_j\le c_\beta$ gives
\[
 \begin{aligned}
 \varrho(y_0/\beta+\beta)-D_i\eta_i
 &\le\chi_i[\varrho(u_0+y_0/\beta)-D_i]\\
 &\le\varrho\left[y_0/\beta+\beta+\frac{1-1/\kappa}{2}c_\beta s_i\right]-D_i\eta_i.
 \end{aligned}
\]
By \cref{lem:bending-gap}, $\varrho(u_0+y_0/\beta)=X_0-\ell_i$ is continuous in $\varrho$, and $y_0$ is uniformly bounded.
Since $\chi_i>0$ and $D_i\eta_i>0$, these bounds give $X_0<\ell_i+D_i$ for sufficiently small $\varrho$ and $X_0>\ell_i+D_i$ for sufficiently large $\varrho$.
Choose $\varrho$ with $X_0=\ell_i+D_i$, and set $\ell_{i+1}=\varrho y_0$.
The upper bound above and the two bounds on $y_0$ give
\begin{equation}\label{eq:bending-rho}
 \frac{\varrho}{D_i}\ge\frac{2\eta_i}{a_\beta/2+(1-1/\kappa)c_\beta s_i},\qquad
 1\le\frac{\ell_{i+1}}{\varrho}\le1+\frac{1-1/\kappa}{2}s_i.
\end{equation}

\paragraph{Step 2. Defining the local hard function.}
Set
\[
 \Phi_i(X,Y)=(\env_1\sigma_{\varrho K})
                   (X-\ell_i-\ell_{i+1}/\beta,Y-\ell_{i+1}).
\]
By \cref{fact:local-moreau}, $\Phi_i$ is nonnegative, convex, and $1$-smooth, with $\partial_2\Phi_i\le0$ since $K$ lies below the horizontal axis.
Moreover, \cref{lem:bending-gap} gives
\[
 \nabla\Phi_i(X_j,Y_j)=\varrho\Pi_K(r_j)=\varrho v_j.
\]
The constructed points therefore satisfy \eqref{eq:local-dynamics}.
The coordinate change and the bounds on $u_j,y_j$ give
\[
 X_j>\ell_i,\qquad 0\le Y_j<\ell_{i+1}\quad(0\le j\le m_{\mathrm{gap}}).
\]
At the endpoint, $r_{m_{\mathrm{gap}}}=v_{m_{\mathrm{gap}}}=(\beta,-1)$ gives
\[
 Y_{m_{\mathrm{gap}}}=\ell_{i+1}-\varrho,\qquad
 \partial_2\Phi_i(X_{m_{\mathrm{gap}}},Y_{m_{\mathrm{gap}}})=-\varrho.
\]
None of these choices uses $b_i$.

We next verify $\Phi_i(X,Y)=0$ for $X\le\ell_i$, $Y\ge0$, so $\nabla\Phi_i=0$ there.
Use \eqref{eq:local-moreau} with $g=\varrho(p,-q)$, $(p,-q)\in K$.
Since $0\le q\le p/\beta$, the linear term is at most $\varrho\ell_{i+1}(q-p/\beta)\le0$.
The choice $g=0$ attains zero, proving $\Phi_i(X,Y)=0$.

For $Y\ge\ell_{i+1}$, we show $\partial_2\Phi_i(X,Y)=0$, so $\Phi_i$ does not contribute to the update of $Y$ in this region.
Since $(p,0)\in K$ whenever $(p,-q)\in K$, replacing $(p,-q)$ by $(p,0)$ increases the maximized expression by
$\varrho q(Y-\ell_{i+1})+\varrho^2q^2/2>0$ if $q>0$.
Every maximizer therefore has $q=0$, giving $\partial_2\Phi_i(X,Y)=0$ for every $X$.

\paragraph{Step 3. The checkpoint update.}
Substituting $Y_{m_{\mathrm{gap}}}=\ell_{i+1}-\varrho$ and $\partial_2\Phi_i(X_{m_{\mathrm{gap}}},Y_{m_{\mathrm{gap}}})=-\varrho$ into \eqref{eq:local-dynamics} gives
\[
 D_{i+1}=Y_{m_{\mathrm{gap}}+1}-\ell_{i+1}
 =\frac{1-1/\kappa}{2}b_i\varrho
  -\frac{b_i}{\kappa}(\ell_{i+1}-\varrho)-\varrho.
\]
The last term subtracts the distance $\varrho$ still needed to reach $\ell_{i+1}$.
Using \eqref{eq:bending-rho} and $4(1-1/\kappa)(1-s_i/\kappa)>1$, we obtain for $b_i>8$
\[
 D_{i+1}\ge\frac{1-1/\kappa}{2}\varrho(1-s_i/\kappa)(b_i-8)>0.
\]
Combining this with \eqref{eq:bending-rho}, using $\kappa\ge4$ and $s_i/\kappa\le1/16$, gives the two ratio bounds \eqref{eq:bending-local-bound}.
Finally, $c_\beta=\beta+\sqrt{1+2\beta}=1+O(\beta)$ and \cref{lem:bending-gap} gives $\log a_\beta=O(1/\beta)$, proving \eqref{eq:bending-parameter-growth}.
\end{proof}
\subsection{\texorpdfstring{Proof of \cref{lem:huber-local}}{Proof of Lemma \getrefnumber{lem:huber-local}}}
\label{app:local-huber}

Before the formal proof, we first explain the choice of $\delta_{\mathrm H}$ in \eqref{eq:huber-local-choice}.
The intended trajectory starts at $(\ell_i+D_i,0)$ and stays in the region $X-Y-\ell_i\ge2\delta_{\mathrm H}$ through the checkpoint query, where the Huber gradient is $(\delta_{\mathrm H},-\delta_{\mathrm H})$.
Consider $f_j=0$ first. The Huber component decreases $X$ and increases $Y$ by equal amounts, while the quadratic term contracts both coordinates toward zero. In particular, $Y$ moves toward $(\kappa-1)\delta_{\mathrm H}/2$: each gap update multiplies the remaining distance to this height by $1-h_j/\kappa$. Starting from $Y_0=0$, this gives
\[
 Y_{m_{\mathrm{gap}}}
 =\frac{(\kappa-1)(1-\chi_i)}2\,\delta_{\mathrm H}.
\]
The Huber contributions cancel in $X+Y$, so quadratic contraction also gives $X_{m_{\mathrm{gap}}}+Y_{m_{\mathrm{gap}}}-\ell_i=\chi_i(\ell_i+D_i)-\ell_i=D_i\eta_i$.
Placing the checkpoint query on the boundary $X-Y-\ell_i=2\delta_{\mathrm H}$ of the constant-gradient region gives $D_i\eta_i=2\delta_{\mathrm H}+2Y_{m_{\mathrm{gap}}}=[2+(\kappa-1)(1-\chi_i)]\delta_{\mathrm H}$, which yields \eqref{eq:huber-local-choice}.
Since $Y$ is nondecreasing during the gap, setting $\ell_{i+1}=Y_{m_{\mathrm{gap}}}$ keeps the next component inactive through the checkpoint query; the checkpoint update then raises $Y$ above this threshold. A nonnegative contribution $f_j$ from the preceding component only moves $X$ farther to the right during the gap, preserving the constant gradient and the same output trajectory.
The proof below verifies these properties without assuming in advance that the queries stay in the required region.

\begin{proof}[Proof of \cref{lem:huber-local}]
Let $K=\conv\{0,(\delta_{\mathrm H},-\delta_{\mathrm H})\}$. By \eqref{eq:local-moreau}, the Huber repair component equals $(\env_1\sigma_K)(X-\ell_i,Y)$.
It is therefore nonnegative, convex, and $1$-smooth. It vanishes when
$X\le\ell_i$, $Y\ge0$, and its gradient is
\[
 \nabla\Phi_i(X,Y)
 =\min\{\delta_{\mathrm H},\tfrac12\max\{X-Y-\ell_i,0\}\}(1,-1).
\]
In particular, $\partial_2\Phi_i\le0$ everywhere.

\paragraph{The gap trajectory.}
For $1\le j\le m_{\mathrm{gap}}$, each gradient coordinate has magnitude at most $\delta_{\mathrm H}$. Together with $f_j\ge0$, this gives
\[
 X_j-Y_j-\ell_i
 \ge\left(1-\frac{h_j}{\kappa}\right)(X_{j-1}-Y_{j-1}-\ell_i)
       -\frac{h_j\ell_i}{\kappa}
       -\left(1-\frac1\kappa\right)h_j\delta_{\mathrm H}.
\]
Expanding this recurrence from $X_0-Y_0-\ell_i=D_i$, we use the telescoping identity
\[
 \sum_{r=1}^{j}h_r\prod_{l=r+1}^{j}(1-h_l/\kappa)
 =\kappa\left[1-\prod_{l=1}^{j}(1-h_l/\kappa)\right]
 \le\kappa(1-\chi_i).
\]
The contraction product over the first $j$ updates is at least $\chi_i$, so, for $0\le j\le m_{\mathrm{gap}}$,
\[
 \begin{aligned}
 X_j-Y_j-\ell_i
 &\ge\chi_i(\ell_i+D_i)-\ell_i
       -(\kappa-1)\delta_{\mathrm H}(1-\chi_i)\\
 &=D_i\eta_i-(\kappa-1)\delta_{\mathrm H}(1-\chi_i)
 =2\delta_{\mathrm H},
 \end{aligned}
\]
where the last equality follows from \eqref{eq:huber-local-choice}.
Thus every gap query and the checkpoint query has gradient $(\delta_{\mathrm H},-\delta_{\mathrm H})$.
Solving the recurrence for $Y_j$ in \eqref{eq:local-forced-dynamics} gives
\begin{equation}\label{eq:huber-gap-trajectory}
 \begin{aligned}
 X_j-Y_j-\ell_i&\ge2\delta_{\mathrm H},\\
 Y_j&=\frac{\kappa-1}{2}\,\delta_{\mathrm H}
       \left[1-\prod_{l=1}^j(1-h_l/\kappa)\right],
       \qquad 0\le j\le m_{\mathrm{gap}}.
 \end{aligned}
\end{equation}
At $j=m_{\mathrm{gap}}$, the product equals $\chi_i$ and $Y_{m_{\mathrm{gap}}}=\ell_{i+1}$.
In particular, $X_j\ge\ell_i$ and $0\le Y_j\le\ell_{i+1}$ throughout the gap.

\paragraph{The checkpoint transfer.}
At the checkpoint query, $Y_{m_{\mathrm{gap}}}=\ell_{i+1}$ and the gradient
is still $(\delta_{\mathrm H},-\delta_{\mathrm H})$. The checkpoint update gives
\[
 \begin{aligned}
 D_{i+1}=Y_{m_{\mathrm{gap}}+1}-\ell_{i+1}
 &=b_i\left(\frac{1-1/\kappa}{2}\,\delta_{\mathrm H}
                    -\frac{\ell_{i+1}}{\kappa}\right)\\
 &=\frac{1-1/\kappa}{2}\,b_i\delta_{\mathrm H}\chi_i>0.
 \end{aligned}
\]
This holds for every $b_i>0$, including $b_i\ge\kappa$, and every
$f_{m_{\mathrm{gap}}+1}\ge0$.
This proves \eqref{eq:local-transfer-states}.
Substituting $\delta_{\mathrm H}$ gives the two equalities in \eqref{eq:huber-local-output}.
At $j=m_{\mathrm{gap}}$, each product in the telescoping sum lies between $\chi_i$ and $1$, so $\chi_i s_i\le\kappa(1-\chi_i)\le s_i$.
These bounds give the lower bound in \eqref{eq:huber-local-output} and, together with $\chi_i\ge1-s_i/\kappa$ for $s_i<\kappa$, the ratio bounds in \eqref{eq:huber-ratio-mass}.
\end{proof}

\subsection{\texorpdfstring{Proof of \cref{lem:bridge}}{Proof of Lemma \getrefnumber{lem:bridge}}}
\label{app:local-bridge}

The bridge follows the same trajectory as repairing through the checkpoint update and has $\partial_2\Phi_i(X,Y)=0$ whenever $Y\ge\ell_{i+1}$, independently of $X$. To obtain this horizontal boundary, recall that the Huber gradient is the projection of $(X-\ell_i,Y)$ onto the segment $\conv\{0,(\delta_{\mathrm H},-\delta_{\mathrm H})\}$. Translating both coordinates by $-\ell_{i+1}$ leaves their difference $X-Y-\ell_i$ unchanged: this translation is parallel to the boundary of the Huber constant-gradient region and changes neither the Huber function nor its gradient. We therefore use the translated query $(X-\ell_i-\ell_{i+1},Y-\ell_{i+1})$ and enlarge the segment to $K_\triangle$ by adding $(\delta_{\mathrm H},0)$. When $Y\ge\ell_{i+1}$, the translated query lies on or above the horizontal axis, so its projection onto the triangle lies on the horizontal edge, giving the desired zero output derivative.

To preserve the repairing trajectory, every query through the checkpoint must still project onto the lower vertex $(\delta_{\mathrm H},-\delta_{\mathrm H})$. Besides the Huber condition $X-Y-\ell_i\ge2\delta_{\mathrm H}$, the triangle requires $Y-\ell_{i+1}\le-\delta_{\mathrm H}$: the translated query must lie at or below that vertex. Since the repairing output is nondecreasing during the gap, the smallest threshold satisfying this condition is
\[
 \ell_{i+1}=Y_{m_{\mathrm{gap}}}+\delta_{\mathrm H}
       =\left[1+\frac{(\kappa-1)(1-\chi_i)}2\right]\delta_{\mathrm H}
       =\frac{D_i\eta_i}{2}.
\]
This explains the choice in \Cref{lem:bridge}.

The translation above is used only in the projection formula; GD still evolves the original coordinates $(X,Y)$. By \cref{app:local-huber}, the two conditions above remain valid for nonnegative contributions $f_j$, so both components have gradient $(\delta_{\mathrm H},-\delta_{\mathrm H})$ at the same iterates through the checkpoint query. With the same initial point, stepsizes, and $f_j$, their GD trajectories therefore agree through the checkpoint update. The bridge's higher threshold reduces the outgoing amplitude by $\delta_{\mathrm H}$; the resulting amplitude remains positive under the lemma's assumptions.

\begin{proof}[Proof of \cref{lem:bridge}]
By \eqref{eq:local-moreau}, the bridge component equals $(\env_1\sigma_{K_\triangle})(X-\ell_i-\ell_{i+1},Y-\ell_{i+1})$, where $\ell_{i+1}=D_i\eta_i/2>0$. By \cref{fact:local-moreau}, it is nonnegative, convex, and $1$-smooth, and its gradient lies in $K_\triangle$. In particular, $\partial_2\Phi_i\le0$ everywhere.

If $X\le\ell_i$ and $Y\ge0$, the linear term in the maximum defining $\Phi_i$ has values
\[
 \delta_{\mathrm H}(X-Y-\ell_i)\le0,\qquad
 \delta_{\mathrm H}(X-\ell_i-\ell_{i+1})\le0
\]
at the two nonzero vertices of $K_\triangle$. It is therefore nonpositive throughout $K_\triangle$, and the maximum is zero, attained at $g=0$. Thus $\Phi_i(X,Y)=0$.
For $Y\ge\ell_{i+1}$, if $(p,-q)\in K_\triangle$ with $q>0$, replacing it by $(p,0)\in K_\triangle$ increases the maximized expression by $q(Y-\ell_{i+1})+q^2/2>0$. The maximizing gradient consequently has second coordinate zero, proving $\partial_2\Phi_i(X,Y)=0$ for every $X$.

\paragraph{The gap trajectory.}
Fix any sequence $f_j\ge0$ for $1\le j\le m_{\mathrm{gap}}+1$, and consider the Huber trajectory $(X_j,Y_j)$ from \cref{app:local-huber} with the same incoming data and stepsizes. We show that these states also satisfy the bridge updates. The bridge threshold is
\[
 \ell_{i+1}=\frac{D_i\eta_i}{2}
 =\delta_{\mathrm H}+\frac{\kappa-1}{2}\,\delta_{\mathrm H}(1-\chi_i).
\]
Consequently, \eqref{eq:huber-gap-trajectory} implies, for $0\le j\le m_{\mathrm{gap}}$,
\[
 X_j-Y_j-\ell_i\ge2\delta_{\mathrm H},\qquad
 0\le Y_j\le Y_{m_{\mathrm{gap}}}=\ell_{i+1}-\delta_{\mathrm H}.
\]
Restricting the maximum defining $\Phi_i$ to the segment $\conv\{0,(\delta_{\mathrm H},-\delta_{\mathrm H})\}$ gives the Huber function $\tfrac12H_{2\delta_{\mathrm H}}(X-Y-\ell_i)$. At each of the displayed states, its gradient is $(\delta_{\mathrm H},-\delta_{\mathrm H})$, so \eqref{eq:local-normal-certificate} holds on this segment with $x=(X_j-\ell_i-\ell_{i+1},Y_j-\ell_{i+1})$. At the remaining vertex $(\delta_{\mathrm H},0)$, the left-hand side of that condition is
\[
 \delta_{\mathrm H}(Y_j-\ell_{i+1}+\delta_{\mathrm H})\le0.
\]
The condition therefore holds on the convex hull $K_\triangle$. Hence \cref{fact:local-moreau} gives
\[
 \nabla\Phi_i(X_j,Y_j)=(\delta_{\mathrm H},-\delta_{\mathrm H})
 \qquad(0\le j\le m_{\mathrm{gap}}).
\]
Induction in \eqref{eq:local-forced-dynamics}, starting from $(\ell_i+D_i,0)$ and using the same $f_j$, now shows that the Huber and bridge trajectories agree through update $m_{\mathrm{gap}}+1$. In particular, the bridge satisfies $X_j\ge\ell_i$ and $0\le Y_j\le\ell_{i+1}-\delta_{\mathrm H}<\ell_{i+1}$ through the checkpoint query.

\paragraph{The checkpoint transfer.}
At the checkpoint query, $Y_{m_{\mathrm{gap}}}=\ell_{i+1}-\delta_{\mathrm H}$ and the gradient is $(\delta_{\mathrm H},-\delta_{\mathrm H})$. The checkpoint update gives
\begin{equation}\label{eq:bridge-output}
 \begin{aligned}
 D_{i+1}=Y_{m_{\mathrm{gap}}+1}-\ell_{i+1}
 &=b_i\left(\frac{1-1/\kappa}{2}\,\delta_{\mathrm H}
              -\frac{Y_{m_{\mathrm{gap}}}}{\kappa}\right)-\delta_{\mathrm H}\\
 &=\delta_{\mathrm H}\left(\frac{(1-1/\kappa)b_i\chi_i}{2}-1\right).
 \end{aligned}
\end{equation}
Since $s_i/\kappa\le1/16$, we have $\chi_i\ge15/16$. Together with $\kappa\ge4$, this gives $(1-1/\kappa)\chi_i\ge45/64>1/2$. Thus, for every $b_i>4$,
\[
 \frac{1-1/\kappa}{2}\,b_i\chi_i-1\ge\frac{1-1/\kappa}{2}\,\chi_i(b_i-4)>0.
\]
Hence $D_{i+1}>0$, which, together with the gap bounds, proves \eqref{eq:local-transfer-states}.
Substituting $\delta_{\mathrm H}$ into \eqref{eq:bridge-output} and using $(\kappa-1)(1-\chi_i)\le s_i$ gives
\[
 \begin{aligned}
 \frac{D_{i+1}}{D_i}
 &=\frac{\eta_i[(1-1/\kappa)b_i\chi_i-2]}{2[2+(\kappa-1)(1-\chi_i)]}\\
 &\ge\frac{\eta_i(1-1/\kappa)\chi_i(b_i-4)}{2(s_i+2)}
 \ge\frac14\eta_i\frac{b_i-4}{s_i+2}.
 \end{aligned}
\]
Since $\ell_{i+1}=D_i\eta_i/2$, the same estimates give
\[
 \begin{aligned}
 \frac{\ell_{i+1}}{D_{i+1}}
 &=\frac{2+(\kappa-1)(1-\chi_i)}{(1-1/\kappa)b_i\chi_i-2}\\
 &\le\frac{s_i+2}{(1-1/\kappa)\chi_i(b_i-4)}
 \le2\frac{s_i+2}{b_i-4}.
 \end{aligned}
\]
\end{proof}

\section{\texorpdfstring{Proof of \Cref{lem:fixed-c-block}}{Proof of \getrefnumber{lem:fixed-c-block}}}
\label{app:sequence}

Let $1 \le c \le 2$ and $a \ge 8$ be fixed, and define$$p = \log_2(1+\sqrt{1+c}), \qquad \nu = 1/p.$$
Consequently, $1 < p < 2$, and $\nu$ is bounded within the fixed interval $[\log_{1+\sqrt3}2, \log_{1+\sqrt2}2] \subset (0,1)$.
Throughout this section, all numerical constants remain independent of $c$ and $a$.

We first prove a scalar splitting inequality in \cref{app:fixed-c-split}, then use it to bound an auxiliary cost introduced in \cref{app:sequence-auxiliary}. These bounds control the unselected mass when the empty checkpoint set is optimal (\cref{app:fixed-c-mass}). We then derive all four checkpoint bounds in \cref{app:fixed-c-block}.

\subsection{The scalar splitting inequality}\label{app:fixed-c-split}

\begin{lemma}[Scalar splitting]\label{lem:fixed-c-split}
If $x,y\ge a$, $B>0$, and
\[
 W=x+y+B-a,\qquad WB=cxy,
\]
then $W^\nu\le x^\nu+y^\nu$.
\end{lemma}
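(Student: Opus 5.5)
The plan is to discard the additive $a$, which only helps, and reduce the claim to a one-variable inequality that is homogeneous in $(x,y)$ and tight at the symmetric point.

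\emph{Step 1: eliminate $B$ and $a$.} From $W=x+y+B-a$ we get $B=W-x-y+a$, so the constraint reads $W(W-x-y+a)=cxy$. Since $a>0$ and $W>0$, this gives $W(W-x-y)\le cxy$. The map $Q(V):=V(V-x-y)$ is increasing for $V\ge(x+y)/2$. Put $S:=(x^\nu+y^\nu)^p$. Because $p>1$, we have $S\ge x+y$. It therefore suffices to prove
\[
 S(S-x-y)\ge cxy .
\]
Indeed, if $W>S$, then $W$ lies in the increasing range of $Q$, so $Q(W)>Q(S)\ge cxy$, which is a contradiction. Hence $W\le S$, which is $W^\nu\le x^\nu+y^\nu$. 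The hypothesis $x,y\ge a$ is not needed beyond positivity.

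\emph{Step 2: normalize.} Both sides of the reduced inequality are homogeneous of degree $2$. Write $x=u^p$ and $y=v^p$ and normalize $u+v=1$, so that $S=1$. The claim becomes
\[
 \varphi(u):=1-u^p-(1-u)^p-c\,u^p(1-u)^p\ge0\qquad(0\le u\le1).
\]
The exponent is tuned so that this is tight at $u=1/2$. From $2^p=1+\sqrt{1+c}$ we get $c=(2^p-1)^2-1=4^p-2\cdot2^p$, hence $c4^{-p}=1-2^{1-p}$, which gives $\varphi(1/2)=0$. We also have $\varphi(0)=\varphi(1)=0$. So $\varphi$ has three zeros, and the inequality is genuinely sharp; this matches the heuristic that the symmetric split $x=y$ with $a\to0$ is extremal.

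\emph{Step 3 (main obstacle): nonnegativity of $\varphi$.} By the symmetry $u\leftrightarrow1-u$, I would restrict to $(0,1/2]$ and rewrite the claim as
\[
 \psi(u):=\frac{1-u^p-(1-u)^p}{u^p(1-u)^p}\ge c .
\]
Near $u=0$ the numerator is approximately $pu$ and $p<2$, so $\psi\to\infty$ as $u\to0$. Also $\psi(1/2)=c$ by Step 2. I would then try to show that $\psi$ is nonincreasing on $(0,1/2]$, uniformly for $p$ in the compact range $[\log_2(1+\sqrt2),\log_2(1+\sqrt3)]$.

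Concretely, I would first try the substitution $u=(1-z)/2$ with $z\in[0,1)$. This makes $u(1-u)=(1-z^2)/4$ explicit and turns $u^p+(1-u)^p$ into an even function of $z$ expandable in powers of $z^2$. The goal is then to check that $\frac{d}{dz}\psi\ge0$ by comparing term-by-term the derivatives of the numerator and denominator expressions.

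If that becomes messy, a fallback is to show that $\varphi$ is concave on $[0,1/2]$ near $1/2$ and has positive slope near $0$, and to rule out an interior zero by counting sign changes of $\varphi'$ (for instance via the convexity of $t\mapsto t^{p-1}$). This calculus step, made uniform in $p$, is where I expect the real work to lie. Steps 1 and 2 are routine.
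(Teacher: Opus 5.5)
\textbf{Verdict: there is a genuine gap, in Step~3.} Your Steps~1 and~2 are sound and match the paper's reduction. The paper phrases Step~1 through the positive root of $X^2-(x+y)X-cxy$, which is equivalent to your monotonicity argument, and it arrives at the same one-variable inequality $z^p+(1-z)^p+c[z(1-z)]^p\le1$.

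Step~3 is the entire content of the lemma, and you do not prove $\varphi\ge0$ on $[0,1/2]$.
\begin{itemize}
\item Your primary plan asserts the stronger claim that $\psi$ is nonincreasing. That may well be true, but it is more than needed, and the ``term-by-term comparison'' after $u=(1-z)/2$ comes with no argument.
\item Your fallback aims at a false intermediate claim. By symmetry $\varphi'(1/2)=0$, and $\varphi(1/2)=0$. So if $\varphi$ were concave near $1/2$, it would satisfy $\varphi\le0$ there, which is the opposite of what you want. In fact $1/2$ is a strict local minimum of $\varphi$, so $\varphi$ is strictly convex near $1/2$.
\item For $1<p<2$ the map $t\mapsto t^{p-1}$ is concave, not convex.
\item Uniformity in $p$ is a non-issue: the lemma concerns a single fixed $c$.
\item A minor slip: $\psi(u)\approx pu^{1-p}\to\infty$ as $u\to0$ because $p>1$, not because $p<2$.
\end{itemize}

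\textbf{The missing idea is to factor the derivative:}
\[
 \varphi'(u)=-p\,[u(1-u)]^{p-1}g(u),\qquad g(u)=(1-u)^{1-p}-u^{1-p}+c(1-2u).
\]
On $(0,1/2)$,
\[
 g''(u)=p(p-1)\bigl[(1-u)^{-p-1}-u^{-p-1}\bigr]<0,
\]
so $g$ is strictly concave there. Its boundary behaviour is as follows.
\begin{itemize}
\item $g(0^+)=-\infty$, since $1-p<0$.
\item $g(1/2)=0$.
\item Using $c=4^p-2^{p+1}$, one gets $g'(1/2)=2^{p+1}(p+1-2^p)<0$, because $2^p>p+1$ for $p>1$.
\end{itemize}
Since $\{g\ge0\}$ is an interval, $g$ has exactly one zero $u_0\in(0,1/2)$. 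It is negative before $u_0$ and positive after.

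Hence $\varphi$ increases on $[0,u_0]$ and decreases on $[u_0,1/2]$. This gives $\varphi\ge\min\{\varphi(0),\varphi(1/2)\}=0$ on $[0,1/2]$, and symmetry covers $[1/2,1]$. This is exactly the paper's argument, and it is the piece your proposal needs in order to be complete.
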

\begin{proof}
Our goal is to show $W \le M$, where $M := (x^\nu+y^\nu)^{1/\nu}$.

First, we characterize $W$ by eliminating $B$. From $B = W - x - y + a$ and $WB = cxy$, we have
\[
 W(W - x - y + a) = cxy \implies W^2 - (x+y)W - cxy = -aW.
\]
Since $x, y \ge a \ge 8$ and $B>0$, it is clear that $W > 0$. Thus, the right-hand side is strictly negative, meaning $W$ evaluates to a negative value in the quadratic polynomial $Q(X) = X^2 - (x+y)X - cxy$. Consequently, $W$ must be strictly less than the unique positive root $R$ of $Q(X)=0$.
To establish $W \le M$, it therefore suffices to show $R \le M$. Since $Q(X)$ opens upwards, $R \le M$ holds if and only if $Q(M) \ge 0$, which is equivalent to
\[
 M^2 - (x+y)M - cxy \ge 0.
\]
Dividing by $M^2$, our target inequality becomes
\[
 \frac{x}{M} + \frac{y}{M} + c\left(\frac{x}{M}\right)\left(\frac{y}{M}\right) \le 1.
\]
By the definition of $M$, we know $(x/M)^\nu + (y/M)^\nu = 1$. This naturally suggests the substitution $z = (x/M)^\nu$, which forces $1-z = (y/M)^\nu$. Since $x,y > 0$, we have $z \in (0,1)$. Recalling that $p = 1/\nu$, we can rewrite $x/M = z^p$ and $y/M = (1-z)^p$. The inequality then reduces to demonstrating that for all $z \in [0,1]$,
\[
 f(z) := z^p + (1-z)^p + c[z(1-z)]^p \le 1.
\]

We now analyze $f(z)$. By the definition of $p$, we have $2^p - 1 = \sqrt{1+c}$, which yields $c = 2^{2p} - 2^{p+1}$. It is easy to verify that $f(0)=1$ and, using this exact value of $c$, that $f(1/2)=1$.
To determine the behavior of $f(z)$ on $(0, 1/2)$, we examine its derivative:
\[
 f'(z) = p[z(1-z)]^{p-1} \underbrace{\left( (1-z)^{1-p} - z^{1-p} + c(1-2z) \right)}_{g(z)}.
\]
Because $p>0$ and $z \in (0,1/2)$, the sign of $f'(z)$ is identical to the sign of $g(z)$. The second derivative $g''(z) = p(p-1)[(1-z)^{-p-1} - z^{-p-1}]$ is strictly negative since $p>1$ and $z < 1-z$. Thus, $g$ is strictly concave.
Observing that $g(0^+) = -\infty$, $g(1/2) = 0$, and $g'(1/2) = 2^{p+1}(p+1-2^p) < 0$, we see that $g$ crosses zero exactly once in $(0, 1/2)$, transitioning from negative to positive. Consequently, $f'(z)$ starts negative and becomes positive, meaning $f(z)$ decreases from $1$ and then increases back to $1$ at $z=1/2$.
By symmetry, $f(z) \le 1$ for every $z\in[0,1]$. This confirms $Q(M) \ge 0$, yielding $W < R \le M$, completing the proof.
\end{proof}

Analogous inequalities form the foundation of prior analyses in the smooth convex setting; see, for example, \citet[Lemma C.4]{jung2026stronger}.

\subsection{Bounding the minimum auxiliary cost}
\label{app:sequence-auxiliary}

To study the score $P_w$ in \eqref{eq:carried-score}, we introduce an auxiliary list.
Let \(g_1,\ldots,g_r,B_1,\ldots,B_r\) be nonnegative numbers, arranged in the ordered auxiliary list
\[
 g_1,B_1,g_2,B_2,\ldots,g_r,B_r.
\]
We use this auxiliary list to calculate checkpoint scores. Only positions with $B_j>0$ may be selected, and the $g_j$ entries are never selected. For a selected $B_j$, let $s_j$ be the sum of all entries after the previous selected $B$ and before $B_j$. For the first selected position, the sum starts at the beginning of the list. Let $s_{\rm tail}$ be the sum of the entries after the final selected $B$.
Define the checkpoint score $P(T)$ and the auxiliary cost $Q_\Lambda(T)$ by
\[
 P(T)=\prod_{j\in T}\frac{B_j}{c(a+s_j)},\qquad
 Q_\Lambda(T)=\frac{c(\Lambda+s_{\rm tail})}{P(T)},\qquad \Lambda\ge a.
\]
Here, following the idea of dynamic programming, we introduce a terminal parameter $\Lambda$ so that the auxiliary problem is closed when splitting into subproblems.
For the empty set, $P(\varnothing)=1$ and $Q_\Lambda(\varnothing)=c[\Lambda+\sum_j(g_j+B_j)]$.
Write $V_\Lambda(g,B)=\min_TQ_\Lambda(T)$ for the minimum auxiliary cost.

For the original problem, $B_j=(h_j-8)_+$ and $g_j=\min\{h_j,8\}$, with $w$ added to $g_1$, give $P(T)=P_w(T;H)$; $\Lambda$ records the fixed contribution to the next fixed checkpoint's denominator, so including that checkpoint makes the score proportional to $1/Q_\Lambda(T)$.

The next lemma describes how inserting a checkpoint into a gap changes $Q_\Lambda$.
\begin{lemma}[Insertion ratio]\label{lem:fixed-c-insertion}
Suppose that a new checkpoint of size \(B>0\) is inserted into a gap.
Let \(l\ge0\) be the sum of the entries in the gap before this checkpoint, and let \(K\ge a\) be the sum of the entries after it plus the terminal offset of the gap (either $a$ or $\Lambda\ge a$).
Then inserting the checkpoint multiplies the auxiliary cost $Q_\Lambda$ by
\[
    I(l,K;B)
      =\frac{c(a+l)K}{B(l+B+K)}.
\]
Moreover, for fixed \(B\), the function \(I(l,K;B)\) is strictly increasing in both \(l\) and \(K\).
\end{lemma}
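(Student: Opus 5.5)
We compute the effect of the insertion directly from the definitions of $P(T)$ and $Q_\Lambda(T)$. Before the insertion, the gap in question has total entry sum $l+B+K'$, where $K'$ is the sum of the entries after the new position. Here $K=K'+\text{offset}$, and the offset is $a$ if the gap ends at a selected checkpoint and $\Lambda$ if it is the tail.

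\emph{Case 1: the gap ends at a selected checkpoint $B_j$.} Before insertion, the factor of $P$ for $B_j$ is $B_j/[c(a+l+B+K')]=B_j/[c(l+B+K)]$. After insertion this factor becomes $B_j/[c(a+K')]=B_j/(cK)$, and a new factor $B/[c(a+l)]$ appears. No other factor of $P$ changes, and $s_{\rm tail}$ is unchanged. Therefore $P$ is multiplied by
\[
 \frac{B}{c(a+l)}\cdot\frac{c(l+B+K)}{cK}
 =\frac{B(l+B+K)}{c(a+l)K},
\]
and $Q_\Lambda\propto 1/P$ is multiplied by its reciprocal, which is $I(l,K;B)$.

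\emph{Case 2: the gap is the tail.} Before insertion, $Q_\Lambda$ contains $c(\Lambda+s_{\rm tail})=c(l+B+K)$, with $K=\Lambda+K'$. After insertion the tail term becomes $c(\Lambda+K')=cK$, and $P$ gains the factor $B/[c(a+l)]$. The ratio is
\[
 \frac{cK}{c(l+B+K)}\cdot\frac{c(a+l)}{B}=I(l,K;B).
\]
The two cases give the same formula, so the reduction to $I$ is uniform.

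\emph{Monotonicity.} Fix $B>0$. Write $\log I=\log(a+l)+\log K-\log(l+B+K)+\text{const}$. Then
\[
 \partial_l\log I=\frac1{a+l}-\frac1{l+B+K},
\]
which is positive because $K\ge a$ and $B>0$ give $l+B+K>a+l$. Likewise,
\[
 \partial_K\log I=\frac1K-\frac1{l+B+K}>0,
\]
since $l\ge0$ and $B>0$. Hence $I(l,K;B)$ is strictly increasing in both $l$ and $K$.

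The only delicate point is bookkeeping: the offset must be absorbed into $K$ consistently, so that the pre-insertion denominator is exactly $l+B+K$ in both cases. Everything else is direct algebra.
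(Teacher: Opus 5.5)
Your proposal is correct and follows essentially the same route as the paper. The paper treats both of your cases at once, noting that the gap contributes a factor $c(l+B+K)$ to $Q_\Lambda$ before the insertion and $c(a+l)K/B$ after it; your two-case computation is the explicit verification of that remark, and your monotonicity argument via logarithmic derivatives is the same as the paper's.
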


\begin{proof}
Before the new checkpoint is inserted, the contribution of the gap to the auxiliary cost $Q_\Lambda$ is \(c(l+B+K)\).
After insertion, the checkpoint contributes the reciprocal factor \(c(a+l)/B\), while the part of the list after the checkpoint contributes \(cK\).
Thus the ratio between the new and old costs is
\[
    I(l,K;B)
      =\frac{c(a+l)K}{B(l+B+K)}.
\]
Its logarithmic derivatives are
\[
    \frac{\partial}{\partial l}\log I
      =\frac{1}{a+l}-\frac{1}{l+B+K}
      =\frac{B+K-a}{(a+l)(l+B+K)}>0
\]
and
\[
    \frac{\partial}{\partial K}\log I
      =\frac1K-\frac1{l+B+K}
      =\frac{l+B}{K(l+B+K)}>0.
\]
Here we used \(B>0\) and \(K\ge a\). Hence \(I\) is strictly increasing in each of \(l\) and \(K\).
\end{proof}

With the above lemma, we can prove the log-submodularity of the auxiliary cost $Q_\Lambda$. In other words, the change in $\log Q_\Lambda$ upon inserting a checkpoint is nonincreasing as more checkpoints are selected.
\begin{lemma}[Log-submodularity]\label{lem:fixed-c-log-submodular}
For any two admissible checkpoint sets \(S,T\),
\[
    Q_\Lambda(S)Q_\Lambda(T)
      \ge
    Q_\Lambda(S\cap T)Q_\Lambda(S\cup T).
\]
If \(S\) and \(T\) are disjoint and both nonempty, the inequality is strict.
\end{lemma}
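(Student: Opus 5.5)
We prove the inequality by induction on $|S\setminus T|+|T\setminus S|$, reducing to a two-point exchange inequality that follows from the monotonicity in \cref{lem:fixed-c-insertion}. Write $A=S\cap T$. The statement is equivalent to saying that the marginal multiplicative change $Q_\Lambda(U\cup\{j\})/Q_\Lambda(U)$ is nonincreasing in $U$ (for $j\notin U$). The standard chain argument then gives the result. Order $S\setminus A=\{s_1,\dots\}$ and add these elements one at a time to $A$ and to $T$. Telescoping gives
\[
\frac{Q_\Lambda(S)}{Q_\Lambda(A)}=\prod_m\frac{Q_\Lambda(A\cup\{s_1,\dots,s_m\})}{Q_\Lambda(A\cup\{s_1,\dots,s_{m-1}\})}\ge\prod_m\frac{Q_\Lambda(T\cup\{s_1,\dots,s_m\})}{Q_\Lambda(T\cup\{s_1,\dots,s_{m-1}\})}=\frac{Q_\Lambda(S\cup T)}{Q_\Lambda(T)},
\]
provided each marginal factor decreases when the base set grows from $A\cup\{s_1,\dots,s_{m-1}\}$ to $T\cup\{s_1,\dots,s_{m-1}\}$.

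The marginal comparison is where \cref{lem:fixed-c-insertion} enters. Inserting checkpoint $j$ into a set $U$ multiplies $Q_\Lambda$ by $I(l,K;B_j)$. Here $l$ is the sum of entries between the last selected checkpoint of $U$ before $j$ (or the list start) and $j$. $K$ is the sum of entries after $j$ up to the next selected checkpoint of $U$, plus the offset, which is $a$ if such a checkpoint exists and $\Lambda+0$ (tail) otherwise. We must check that the factor is computed correctly in the tail case. There, the gap's old contribution is $c(\Lambda+s_{\rm tail})$ and the new one is $c(a+l)/B\cdot c(\Lambda+K')$ divided by $c$ once. This matches the formula with $K=\Lambda+K'$, which is what the lemma asserts. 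Enlarging $U$ to $U'\supseteq U$ can only add checkpoints inside this gap. That decreases $l$ (weakly), and it decreases $K$ (weakly): if a checkpoint is added after $j$, then $K$ changes from $K'+\Lambda$ or $K'+a$ to a shorter sum plus $a$. Since $\Lambda\ge a$ and the entries are nonnegative, $K$ does not increase. Monotonicity of $I$ in $l$ and $K$ then gives $I$ under $U'$ $\le$ $I$ under $U$, which is exactly the required decrease.

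For strictness when $S,T$ are disjoint and nonempty, we have $A=\varnothing$. Take $s_1\in S$ and compare the first factor, which is inserting $s_1$ into $\varnothing$ versus into $T\ne\varnothing$. Some $t\in T$ lies either before $s_1$ or after it. If $t$ lies before $s_1$, then $l$ drops by at least $B_t+\cdots>0$, and strict monotonicity in $l$ gives strict decrease. If $t$ lies after $s_1$, then $K$ changes from $\Lambda+(\text{entries after }s_1)$ to $a+(\text{entries strictly between})$, and the loss is at least $B_t>0$ because $t$ is admissible. Strict monotonicity in $K$ then gives strict inequality. All other factors satisfy the weak inequality, so the product is strict.

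The main obstacle is the careful bookkeeping showing that $l$ and $K$ shrink monotonically under enlarging the base set, particularly at the tail, where the offset switches from $\Lambda$ to $a$. The inequality $\Lambda\ge a$ is precisely what makes that switch harmless.
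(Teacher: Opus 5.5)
Your proposal is correct and follows essentially the same route as the paper: insert the elements of $S\setminus T$ one at a time into $S\cap T$ and into $T$, and compare the resulting factors using the monotonicity of the insertion ratio $I(l,K;B)$ from \cref{lem:fixed-c-insertion}. Two points in your version are simply more explicit than the paper's brief text. You check that when the terminal offset switches from $\Lambda$ to $a$, the quantity $K$ still decreases. You also place the strict inequality at the very first insertion, namely inserting $s_1$ into $\varnothing$ versus into $T\neq\varnothing$.
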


\begin{proof}
Start with the checkpoint set \(S\cap T\). Insert the checkpoints in \(S\setminus T\) and \(T\setminus S\), one at a time.
The insertion ratio from \cref{lem:fixed-c-insertion} is increasing in the sums \(l\) and \(K\) on either side of the inserted checkpoint.
Adding other checkpoints can only split the containing gap and hence decrease these two sums. Consequently, the factor by which $Q_\Lambda$ changes upon inserting the checkpoints in \(S\setminus T\) is no smaller when they are inserted into \(S\cap T\) than when they are inserted into \(T\).
Therefore
\[
    \frac{Q_\Lambda(S)}{Q_\Lambda(S\cap T)}
      \ge
    \frac{Q_\Lambda(S\cup T)}{Q_\Lambda(T)},
\]
which is the desired inequality.

If \(S\) and \(T\) are disjoint and nonempty, at least one insertion takes place after another checkpoint has strictly shortened the relevant gap. At that insertion, at least one of \(l\) or \(K\) decreases strictly. The strict monotonicity in \cref{lem:fixed-c-insertion} then gives strict inequality.
\end{proof}

We seek a bound on \(V_\Lambda(g,B)\) independent of \(B\). The next lemma treats the case where the empty set minimizes \(Q_\Lambda\), showing that the total mass of the auxiliary list is then controlled by \(\Lambda\), \(a\), and the nonselectable entries \(g_j\). This case will suffice: we subsequently show that, for fixed \(g\) and \(\Lambda\), \(V_\Lambda(g,B)\) attains its maximum over \(B\) where the empty set is a minimizer.
\begin{lemma}
\label{lem:fixed-c-empty-terminal}
Suppose that \(Q_\Lambda(T)\ge Q_\Lambda(\varnothing)\) for every admissible $T$.
Then
\[
    \left[\Lambda+\sum_{j=1}^r(g_j+B_j)\right]^\nu
      \le
    \Lambda^\nu+\sum_{j=1}^r(a+g_j)^\nu.
\]
\end{lemma}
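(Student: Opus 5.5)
We would argue by induction on the number of positions $j$ with $B_j>0$. If every $B_j$ vanishes, then $\Lambda+\sum_j g_j\le\Lambda+\sum_j(a+g_j)$, and subadditivity of $t\mapsto t^\nu$ for $0<\nu<1$ gives the claim. Otherwise we split the list at one selectable position $k$, apply \cref{lem:fixed-c-split} to that split, and apply the induction hypothesis to the two pieces.

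\emph{The split.} Let $l$ be the total mass of the entries before $B_k$, and set $K:=\Lambda+(\text{mass after }B_k)\ge a$. Compare the empty set with the singleton $\{k\}$. We have $Q_\Lambda(\varnothing)=c(l+B_k+K)$ and $Q_\Lambda(\{k\})=c^2(a+l)K/B_k$, so optimality of $\varnothing$ gives
\[
 B_k\,(l+B_k+K)\le c\,(a+l)K .
\]
Put $x=a+l$, $y=K$ and $W=x+y+B_k-a$, which equals the total $\Lambda+\sum_j(g_j+B_j)$. The display says $WB_k\le cxy$. The proof of \cref{lem:fixed-c-split} uses the equality only to get $Q(W)<0$ for the quadratic $Q(X)=X^2-(x+y)X-cxy$, and the inequality gives $Q(W)\le -aW<0$ just as well. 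Hence
\[
 W^\nu\le(a+l)^\nu+K^\nu .
\]

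\emph{The subproblems.} Both $a+l$ and $K$ are totals of auxiliary problems of the same form.
\begin{itemize}
\item The prefix problem has list $g_1,B_1,\ldots,g_{k-1},B_{k-1}$ and terminal offset $a+g_k\ge a$; its total is $a+l$.
\item The suffix problem has list $g_{k+1},B_{k+1},\ldots,g_r,B_r$ and terminal offset $\Lambda$; its total is $K$.
\end{itemize}
Each has fewer positive $B$'s. If the empty set is optimal for both, the induction hypothesis gives
\[
 (a+l)^\nu\le(a+g_k)^\nu+\sum_{j<k}(a+g_j)^\nu,\qquad K^\nu\le\Lambda^\nu+\sum_{j>k}(a+g_j)^\nu ,
\]
and adding these yields the lemma.

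\emph{Inheritance, the main obstacle.} Costs factor across the checkpoint $k$: for $T$ in the prefix, $Q_\Lambda(T\cup\{k\})=Q^{\rm pre}(T)\cdot cK/B_k$, and a symmetric identity holds for the suffix. Hence empty-optimality of a subproblem is equivalent to $Q_\Lambda(T\cup\{k\})\ge Q_\Lambda(\{k\})$ for all $T$ on that side. This says that $\{k\}$, rather than $\varnothing$, is locally optimal, which does not follow directly from the hypothesis.

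My first attempt is to choose $k$ carefully. Among all nonempty admissible sets, take one minimizing $Q_\Lambda$, and among those one of minimal cardinality. I would try to use the strict disjoint case of \cref{lem:fixed-c-log-submodular} to show this minimizer is a singleton $\{k\}$. Its minimality then gives exactly $Q_\Lambda(T\cup\{k\})\ge Q_\Lambda(\{k\})$ for every $T$ disjoint from $k$, so both subproblems inherit empty-optimality. The hypothesis $Q_\Lambda(\{k\})\ge Q_\Lambda(\varnothing)$ still gives the split inequality above.

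If the singleton claim fails, I would use a monotone-deformation fallback. The right-hand side of the lemma does not depend on $B$, while the left-hand side increases in every $B_j$. So it suffices to prove the bound at points of the region $\{B:\varnothing\text{ optimal}\}$ that are maximal in one coordinate. I would increase some $B_k$ until a nonempty set ties with $\varnothing$, and argue from \cref{lem:fixed-c-insertion} that at the first tie the tying minimizer can be taken to be a singleton. Then I would run the split-and-induct argument above at that boundary point.
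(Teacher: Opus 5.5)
\textbf{There is a genuine gap: the point at which you want to split the list need not exist.}

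Two parts of your proposal are correct, and you located the obstacle correctly:
\begin{itemize}
\item the relaxed use of \cref{lem:fixed-c-split}, since $WB_k\le cxy$ still gives $W^2-(x+y)W-cxy\le -aW<0$;
\item the factorization $Q_\Lambda(T\cup\{k\})=Q^{\rm pre}(T)\,cK/B_k$.
\end{itemize}
However, neither of your two ways of producing a usable singleton works.

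\emph{First attempt.} A nonempty minimizer of $Q_\Lambda$ need not be a singleton, and log-submodularity cannot force it. For disjoint nonempty $S_1\cup S_2=T$, it only gives $Q_\Lambda(S_1)Q_\Lambda(S_2)>Q_\Lambda(\varnothing)Q_\Lambda(T)$. This is compatible with $Q_\Lambda(\varnothing)\le Q_\Lambda(T)<\min\{Q_\Lambda(S_1),Q_\Lambda(S_2)\}$.

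Concretely, take $c=1$, $a=\Lambda=8$, $r=2$, $g=(0,56)$ and $B=(7.2,7.2)$. Then:
\begin{itemize}
\item $Q_\Lambda(\varnothing)=78.4$;
\item $Q_\Lambda(\{1,2\})=4096/51.84\approx79.01$;
\item $Q_\Lambda(\{1\})=Q_\Lambda(\{2\})=569.6/7.2\approx79.11$.
\end{itemize}
The hypothesis of the lemma holds. But $Q_\Lambda(\{1,2\})<Q_\Lambda(\{k\})$ for both $k$, so at this $B$ neither split inherits empty-optimality on its other side.

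\emph{Fallback.} It fails on the same example. Raise $B_1$ alone with $B_2=7.2$ fixed. Then $\{1,2\}$ becomes tight at $B_1\approx7.2515$, whereas $\{1\}$ would only tie at $B_1\approx7.2598$, and $\{2\}$ never ties. So the first tie is with a two-element set. This boundary point is even maximal in each coordinate separately: raising $B_2$ also violates the $\{1,2\}$ constraint. Thus \cref{lem:fixed-c-insertion} cannot produce a tight singleton there.

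\textbf{The missing idea is to trade mass between two selected coordinates, not only to raise coordinates.} The paper does this as follows.
\begin{enumerate}
\item It maximizes the total $W=\Lambda+\sum_j(g_j+B_j)$ jointly over the feasible set of $B$. This set is compact: it is bounded via the singleton constraints and is closed.
\item Any zero coordinate of the maximizer is removed by merging $g_j$ into its neighbour, which only helps the right-hand side by subadditivity. This is why the induction is on $r$ rather than on the number of positive $B_j$: the maximizer may have more positive entries than the original $B$.
\item At a maximizer with all $B_j>0$, the intersection $T_*$ of all tight nonempty sets is nonempty, by \cref{lem:fixed-c-log-submodular} and its strict disjoint case.
\item $T_*$ must be a singleton. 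Suppose it contained entries $u<v$. Apply the move $u\mapsto u-t$, $v\mapsto v+qt$ with $q>1$ close to $1$. This raises $W$ by $(q-1)t$ and $Q_\Lambda(\varnothing)$ by $c(q-1)t$. Every tight cost contains both positions, where the selected entries enter only through the factor $1/(uv)$. So each tight cost rises at first order by $Q_\Lambda(\varnothing)(1/u-q/v)t$, which is larger. This contradicts maximality. The case $u=v$ is handled at second order.
\end{enumerate}
At such a maximizer, the tight singleton $\{j\}$ satisfies $Q_\Lambda(\{j\})=Q_\Lambda(\varnothing)=\min_T Q_\Lambda(T)$. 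This gives exactly the two-sided inheritance you needed, and your split-and-induct argument then closes the proof.
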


\begin{proof}
We argue by induction on \(r\). The assertion is immediate when \(r=0\).
Now let \(r\ge1\), and assume that the statement holds for every auxiliary list with fewer than \(r\) pairs.
We prove the statement for a list with \(r\) pairs.
Write
\[
    W=\Lambda+\sum_{j=1}^r(g_j+B_j).
\]
Treat $g_j$'s and $\Lambda$ as fixed and $B_j$'s as variable.
Among all \(B\ge0\) for which the empty set minimizes $Q_\Lambda$, choose one that maximizes \(W\).

We first justify the existence of a maximizer by proving that the feasible set is compact. It is nonempty because $B=0$ is feasible. For boundedness, the singleton constraint $Q_\Lambda(\{j\})\ge Q_\Lambda(\varnothing)$ gives, whenever $B_j>0$,
\[
 B_j\le c\left(a+\sum_{i<j}(g_i+B_i)+g_j\right).
\]
The same inequality holds when $B_j=0$. Since the $g_i$ are fixed, these inequalities bound $B_1,\ldots,B_r$ successively. For closedness, consider a convergent sequence of feasible vectors. Every checkpoint set admissible at the limit remains admissible for all sufficiently large indices, since its selected entries have positive limits. Its cost therefore converges, and the constraint $Q_\Lambda(T)\ge Q_\Lambda(\varnothing)$ passes to the limit. Thus the limit is feasible, proving closedness. The feasible set is consequently compact, so the continuous function $W$ attains its maximum.

Next, suppose that \(B_j=0\) at a maximizer. Since this position cannot be selected, it may be removed from the list.
If \(j<r\), replace \(g_{j+1}\) by \(g_j+g_{j+1}\). This leaves both \(W\) and all auxiliary costs $Q_\Lambda(T)$ unchanged. Moreover,
\[
    (a+g_j+g_{j+1})^\nu
      \le
    (a+g_j)^\nu+(a+g_{j+1})^\nu,
\]
because \(t\mapsto t^\nu\) is subadditive.
If \(j=r\), remove the last pair and replace \(\Lambda\) by \(\Lambda+g_r\). Again the auxiliary costs and \(W\) are unchanged, while
\[
    (\Lambda+g_r)^\nu
      \le
    \Lambda^\nu+(a+g_r)^\nu.
\]
The induction hypothesis therefore proves the result. Hence it remains to consider a maximizer for which every \(B_j>0\).

Now, call a nonempty checkpoint set \(T\) \emph{tight} if
\[
    Q_\Lambda(T)=Q_\Lambda(\varnothing).
\]
At least one nonempty set is tight: otherwise every constraint $Q_\Lambda(T)\geq Q_\Lambda(\varnothing)$ would have positive slack, and by continuity, one could slightly increase one of the \(B_j\) while preserving all constraints, thereby increasing \(W\).
By log-submodularity, the intersection and union of two tight sets are again tight. Indeed,
\[
 Q_\Lambda(S)Q_\Lambda(T)
 \ge Q_\Lambda(S\cap T)Q_\Lambda(S\cup T)
 \ge Q_\Lambda(\varnothing)^2,
\]
and equality at the two ends forces equality throughout. Moreover, two tight sets cannot be disjoint, by the strict part of \cref{lem:fixed-c-log-submodular}. It follows that the intersection of all nonempty tight sets is itself a nonempty tight set. Denote this minimal tight set by \(T_*\).

We claim that \(T_*\) consists of a single checkpoint. Suppose instead that it contains two entries with values \(u\) and \(v\). Under a perturbation of \(u\) and \(v\), the only changing factors in the costs of tight nonempty sets are the denominator factors \(uv\).
If \(u\ne v\), interchange the labels if necessary so that \(u<v\), and perturb
\[
    u\longmapsto u-t,
    \qquad
    v\longmapsto v+qt,
\]
where \(q>1\) is chosen sufficiently close to \(1\).
The empty-set cost increases only by \(c(q-1)t\). On the other hand, for a tight nonempty set, the reciprocal product changes to first order by
\[
    \frac{1}{u-t}\frac{1}{v+qt}
      =
    \frac1{uv}
    \left[
      1+\left(\frac1u-\frac qv\right)t+O(t^2)
    \right].
\]
Since \(u<v\), we may choose \(q>1\) close enough to \(1\) that \(\frac1u-\frac qv>0\), and then choose it so that the increase of every tight cost is larger than the increase of the empty cost.
All nontight constraints have positive slack and therefore remain valid for sufficiently small \(t\).
This increases \(W\), contradicting its maximality.
If \(u=v=b\), use instead
\[
    u\longmapsto b-t,
    \qquad
    v\longmapsto b+t+\eta t^2.
\]
Then
\[
    \frac{1}{(b-t)(b+t+\eta t^2)}
      =
    \frac1{b^2}
    \left[
       1+\left(\frac1{b^2}-\frac{\eta}{b}\right)t^2
       +O(t^3)
    \right],
\]
whereas the empty-set cost increases only by \(c\eta t^2\).
Choosing \(\eta>0\) sufficiently small again yields a feasible perturbation that increases \(W\), a contradiction.
Thus \(T_*\) is a singleton. In particular, there is a tight singleton checkpoint, say at position \(j\).

Finally, set
\[
    x=a+g_j+\sum_{i<j}(g_i+B_i),
    \qquad
    y=\Lambda+\sum_{i>j}(g_i+B_i).
\]
Then
\[
    W=x+y+B_j-a.
\]
The equality \(Q_\Lambda(\{j\})=Q_\Lambda(\varnothing)\) is equivalent to
\[
    WB_j=cxy.
\]
The empty set must minimize the auxiliary cost in the part of the list lying strictly to the left of \(j\), regarded as a shorter problem with terminal offset \(a+g_j\). Otherwise, an improving collection of checkpoints in that subproblem could be inserted before \(j\), producing a checkpoint set in the original problem with cost below \(Q_\Lambda(\{j\})\), and hence below \(Q_\Lambda(\varnothing)\). The same argument shows that the empty set minimizes the auxiliary cost in the right subproblem, whose terminal offset is \(\Lambda\).
The induction hypothesis applied to the two shorter lists therefore bounds \(x^\nu\) and \(y^\nu\) by the corresponding portions of
\[
    \Lambda^\nu+\sum_{i=1}^r(a+g_i)^\nu.
\]
Using \(WB_j=cxy\) and applying \cref{lem:fixed-c-split}, we obtain
\[
    W^\nu\le x^\nu+y^\nu
      \le
    \Lambda^\nu+\sum_{i=1}^r(a+g_i)^\nu.
\]
This completes the induction.
\end{proof}

Next, we establish the attainment of the maximum of $V_\Lambda(g,B)$ over $B$.
\begin{lemma}
\label{lem:fixed-c-attainment}
For fixed \(g\) and \(\Lambda\), the function
\[
    B\longmapsto V_\Lambda(g,B)
\]
attains its supremum on \([0,\infty)^r\).
\end{lemma}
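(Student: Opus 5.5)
The plan is to show that $V_\Lambda(g,\cdot)$ is bounded above, continuous, and that its values can be approximated on a compact box; the supremum is then attained by Weierstrass. The first step is an \emph{a priori} upper bound. Since the empty set is a candidate, $V_\Lambda(g,B)\le Q_\Lambda(\varnothing)=c[\Lambda+\sum_j(g_j+B_j)]$, but this grows with $B$. So we also need a complementary bound showing that large $B_j$ make some nonempty set cheap. Concretely, the singleton $\{j\}$ gives
\[
 Q_\Lambda(\{j\})=\frac{c(a+s_j)\,c(\Lambda+s_{\rm tail})}{B_j},
\]
where $s_j=\sum_{i<j}(g_i+B_i)+g_j$ and $s_{\rm tail}=\sum_{i>j}(g_i+B_i)$.

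The second step is a reduction to a box by induction from the left. Set $M_0:=c(a+\sum_i g_i)$. We define successively bounds $N_1,\dots,N_r$, depending only on $g,\Lambda,a,c$, and claim that any $B$ can be replaced by a truncated $\tilde B$ with $\tilde B_j\le N_j$ and $V_\Lambda(g,\tilde B)\ge V_\Lambda(g,B)$. For this, choose the smallest $j$ with $B_j>N_j$. Selecting $j$ decouples the list: any minimizing set containing $j$ splits into a left part and a right part, and the right part sees terminal offset $\Lambda$ while the left sees offset $a+g_j$. I would compare $V_\Lambda$ with the cost of sets containing $j$, namely $V^{\rm left}_{a+g_j}\cdot c^{-1}\cdot(a+s_j)^{-1}\cdots$. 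It is cleaner to use the insertion ratio of \cref{lem:fixed-c-insertion}: inserting $j$ into any set not containing it multiplies the cost by
\[
 I(l,K;B_j)\le\frac{c(a+l)}{B_j},
\]
and this is $<1$ once $B_j>c(a+l)$, where $l\le\sum_{i<j}(g_i+N_i)+g_j$. Hence for $B_j$ beyond $N_j:=c(a+\sum_{i<j}(g_i+N_i)+g_j)$, some minimizer contains $j$. The costs of sets containing $j$ are then decreasing in $B_j$. The left factor is $c(a+s_j)/B_j$, the left part's $Q$ is independent of $B_j$, and the right part is independent of $B_j$. So lowering $B_j$ to $N_j$ does not decrease $V$, provided that sets not containing $j$ do not become the minimizer at a smaller value. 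This is where care is needed. When we lower $B_j$, the costs of sets not containing $j$ also decrease, because $B_j$ lies in one of their gaps. So I would instead argue directly via $V(B)=\min\bigl(\min_{T\ni j}Q(T),\,\min_{T\not\ni j}Q(T)\bigr)$ and show that $\min_{T\ni j}\le\min_{T\not\ni j}$ holds throughout $[N_j,B_j]$, using the insertion inequality uniformly in $l$. On that range the first term is nonincreasing in $B_j$, so the minimum is as well.

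The third step is to conclude. The map $V_\Lambda(g,\cdot)$ is continuous on the compact box $\prod_j[0,N_j]$. Each $Q_\Lambda(T)$ is continuous where $T$ is admissible, and as $B_j\downarrow0$ a set containing $j$ has cost tending to $+\infty$, so it drops out of the minimum continuously. Because $V$ is a minimum over finitely many such functions, it is continuous, and therefore attains its maximum on the box. Combined with the reduction, this maximum equals the supremum over $[0,\infty)^r$.

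The main obstacle is the monotonicity in step two, since lowering $B_j$ affects both families of sets. If the direct comparison fails, I would fall back to a limiting argument. Take a maximizing sequence; boundedness of $V$ follows from the box reduction at the level of values, which only needs $V(B)\le V(\tilde B)$ up to the bound $\sup_{\rm box}V$. Then pass to a subsequence in which each coordinate either converges or diverges, and show that diverging coordinates can be replaced by $N_j$ without loss.
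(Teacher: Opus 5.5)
Your argument is correct, and it obtains compactness by a different mechanism from the paper's. The paper's route has three steps. First, it proves a uniform bound on $V_\Lambda(g,\cdot)$ by induction on $r$, using a position of maximal $B_j$ as the first checkpoint. Second, it shows that $V_\Lambda(g,B^{(n)})\to0$ along any sequence escaping every bounded set, by selecting the first diverging coordinate. Third, it concludes from $V_\Lambda(g,0)>0$ and continuity.

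You instead give an explicit monotone truncation onto the box $\prod_j[0,N_j]$. Let $j$ be the smallest index with $B_j>N_j$. For every $B_j'\ge N_j$, the gap sum $l$ before $j$ is at most $\sum_{i<j}(g_i+N_i)+g_j$, and this bound does not depend on $B_j'$. Hence \cref{lem:fixed-c-insertion} gives $I(l,K;B_j')\le c(a+l)/B_j'\le1$, so every $T\not\ni j$ is dominated by $T\cup\{j\}$. On $[N_j,B_j]$ the value $V_\Lambda$ therefore equals $\min_{T\ni j}Q_\Lambda(T)$, and each of these costs is a fixed quantity divided by $B_j'$. Lowering $B_j$ to $N_j$ thus cannot decrease $V_\Lambda$, and processing indices left to right lands in the box after at most $r$ steps.

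So the point you flag as needing ``care'' goes through exactly as you sketch it, and neither your Step~1 nor your fallback limiting argument is needed. Your continuity step is the same as the paper's.

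What your route buys is an explicit compact set, independent of $\Lambda$, that contains a maximizer. Boundedness of $V_\Lambda$ comes for free as its maximum over the box, with no subsequence extraction. The paper's route is a softer coercivity argument that never uses the specific $1/B_j$ dependence of costs of sets containing $j$. Along the way it also establishes the qualitative fact that $V_\Lambda(g,B)\to0$ as $B$ escapes to infinity.
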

\begin{proof}
We first show, by induction on \(r\), that \(V_\Lambda(g,B)\) is uniformly bounded in \(B\).
The assertion is immediate for \(r=0\). Suppose \(r\ge1\), and let
\[
    B_{\max}=\max_j B_j.
\]
If \(B_{\max}\le1\), then choosing no checkpoint gives
\[
    V_\Lambda(g,B)
      \le Q_\Lambda(\varnothing)
      \le c\left(\Lambda+\sum_j g_j+r\right).
\]
Now suppose \(B_{\max}\ge1\), and choose a position \(j\) for which \(B_j=B_{\max}\). Use \(j\) as the first checkpoint. The factor contributed by the portion preceding \(B_j\) is at most
\[
    c\left[
       r+\frac{a+\sum_i g_i}{B_{\max}}
    \right].
\]
The remaining part of the list contains fewer than \(r\) pairs, so its minimum auxiliary cost is uniformly bounded by the induction hypothesis. This gives a uniform bound for \(V_\Lambda(g,B)\).

Next consider a sequence \(B^{(n)}\) escaping every bounded subset of \([0,\infty)^r\). After passing to a subsequence, each coordinate either converges to a finite limit or tends to \(+\infty\). Select the first coordinate that tends to \(+\infty\) as the first checkpoint.
The factor contributed before that checkpoint tends to zero, while the minimum auxiliary cost of the shorter list after it remains uniformly bounded.
Consequently, \(V_\Lambda(g,B^{(n)})\longrightarrow0\).
Since \(V_\Lambda(g,0)=c(\Lambda+\sum_jg_j)>0\), no maximizing sequence can escape to infinity.

Finally, \(V_\Lambda\) is continuous at finite \(B\), including points with zero coordinates. Indeed, checkpoint sets selecting a coordinate with \(B_j\to0\) have costs tending to \(+\infty\), while checkpoint sets omitting that coordinate vary continuously and include the finite empty-set cost. Hence the minimum is locally determined by checkpoint sets that omit the vanishing coordinate.
A maximizing sequence is therefore bounded, and continuity yields a finite maximizer.
\end{proof}

At a maximizer of $V_\Lambda(g,B)$ over $B$, the empty set minimizes $Q_\Lambda$.
\begin{lemma}
\label{lem:fixed-c-empty-at-max}
For fixed $g$ and $\Lambda$, let $B^*$ maximize $V_\Lambda(g,B)$ over $B\in[0,\infty)^r$. Then
\[
    V_\Lambda(g,B^*)=Q_\Lambda(\varnothing).
\]
\end{lemma}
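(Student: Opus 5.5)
The plan is to argue by contradiction, perturbing $B^*$ to strictly increase $V_\Lambda(g,\cdot)$. Suppose the empty set does not minimize $Q_\Lambda$ at $B^*$, so $V:=V_\Lambda(g,B^*)<Q_\Lambda(\varnothing)$. Let $\mathcal M$ be the family of admissible checkpoint sets attaining $V$. Every member of $\mathcal M$ is nonempty.

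\emph{Step 1: $\mathcal M$ has a common element.} By \cref{lem:fixed-c-log-submodular}, for $S,T\in\mathcal M$,
\[
 V^2=Q_\Lambda(S)Q_\Lambda(T)\ge Q_\Lambda(S\cap T)Q_\Lambda(S\cup T)\ge V^2 .
\]
Here $S\cap T$ and $S\cup T$ are admissible, since they select only positions with $B_j>0$. Equality is forced throughout, so $S\cap T\in\mathcal M$. Because $\mathcal M$ is finite, the intersection $T_0$ of all its members lies in $\mathcal M$. Since $\varnothing\notin\mathcal M$, the set $T_0$ is nonempty. Fix a position $j\in T_0$. Then every minimizer selects $j$, and $B^*_j>0$.

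\emph{Step 2: perturb $B_j$ downward.} Replace $B^*_j$ by $B^*_j-t$ for small $t\in(0,B^*_j)$, keeping all other entries fixed.

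For any admissible $T$ containing $j$, the entry $B_j$ does not lie in any gap mass $s_i$ or in $s_{\rm tail}$, since gaps consist only of unselected entries. Hence $Q_\Lambda(T)$ depends on $B_j$ only through the factor $c(a+s_j)/B_j$, and it strictly increases as $B_j$ decreases. In particular, every set in $\mathcal M$ has cost strictly larger than $V$ after the perturbation.

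Every admissible set outside $\mathcal M$ has cost strictly larger than $V$ at $B^*$. There are finitely many such sets, and each cost is continuous in $t$ as long as every selected entry stays positive, which holds for small $t$. So for $t$ small enough these sets still cost more than $V$.

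The set of admissible checkpoint sets is unchanged for small $t$, because $B_j$ stays positive. Therefore the new minimum satisfies $V_\Lambda(g,B^*-te_j)>V$. This contradicts the maximality of $B^*$, which exists by \cref{lem:fixed-c-attainment}, and proves $V_\Lambda(g,B^*)=Q_\Lambda(\varnothing)$.

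The main obstacle is Step 2. Decreasing $B_j$ also lowers the costs of sets that omit $j$, and $Q_\Lambda(\varnothing)$ itself drops by $ct$. The argument works only because Step 1 guarantees that no minimizer omits $j$. The key observation is therefore the closure of $\mathcal M$ under intersection, obtained from log-submodularity. Once a common element is available, the remaining continuity and slack argument is routine and mirrors the perturbation argument in the proof of \cref{lem:fixed-c-empty-terminal}.
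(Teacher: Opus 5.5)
Your proof is correct and follows essentially the same route as the paper. Log-submodularity makes the minimizing family closed under intersection, which gives a position $j$ common to all minimizers; decreasing $B_j^*$ slightly then raises every minimizing cost, while the finitely many nonminimizing sets keep their slack, contradicting maximality. The one small difference is that you get nonemptiness of the common intersection directly from $\varnothing\notin\mathcal M$, whereas the paper also invokes the strict part of the log-submodularity lemma, which is not needed here.
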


\begin{proof}
Suppose, to the contrary, that the empty set is not a minimizer at
\(B^*\). Let
\[
    m=V_\Lambda(g,B^*),
\]
and let \(\mathcal M\) be the family of checkpoint sets \(T\) satisfying \(Q_\Lambda(T)=m\). Every set in \(\mathcal M\) is then nonempty. Log-submodularity implies that the intersection of two minimizing sets is again minimizing.
Furthermore, the intersection cannot be empty by the strict part of \Cref{lem:fixed-c-log-submodular}.
Since \(\mathcal M\) is finite, its total intersection is therefore nonempty.

Choose \(j\) belonging to every minimizing set. Slightly decrease \(B_j^*\). For every \(T\in\mathcal M\), the selected entry \(B_j^*\) appears only in the denominator of \(Q_\Lambda(T)\). Thus every minimizing cost strictly increases. All other checkpoint costs that have positive slack remain above the new minimum for a sufficiently small perturbation. It follows that \(V_\Lambda(g,B)\) increases, contradicting the maximality of \(B^*\).
Hence the empty set is a minimizer at \(B^*\).
\end{proof}

The following lemma gives an upper bound of the minimum auxiliary cost $V_\Lambda(g,B)$ that is independent of the $B_j$'s.
\begin{lemma}[Bound on the minimum auxiliary cost]\label{lem:fixed-c-terminal}
For arbitrary \(g_1,\ldots,g_r\ge0\), arbitrary
\(B_1,\ldots,B_r\ge0\), and \(\Lambda\ge a\),
\[
    V_\Lambda(g,B)
      \le
    c\left[
       \Lambda^\nu+\sum_{j=1}^r(a+g_j)^\nu
    \right]^p.
\]
\end{lemma}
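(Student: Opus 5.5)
The plan is to combine the three preceding lemmas: reduce to a worst-case choice of $B$, at which the empty set is optimal, and then apply the mass bound of \cref{lem:fixed-c-empty-terminal}.

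\textbf{Step 1 (reduction to a maximizer).} Fix $g$ and $\Lambda$. By \cref{lem:fixed-c-attainment}, the map $B\mapsto V_\Lambda(g,B)$ attains its supremum at some $B^*\in[0,\infty)^r$. Hence
\[
 V_\Lambda(g,B)\le V_\Lambda(g,B^*)\quad\text{for all }B,
\]
and it suffices to prove the bound at $B^*$.

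\textbf{Step 2 (the empty set is optimal at $B^*$).} By \cref{lem:fixed-c-empty-at-max},
\[
 V_\Lambda(g,B^*)=Q_\Lambda(\varnothing)=c\Bigl[\Lambda+\sum_j(g_j+B_j^*)\Bigr].
\]
In particular, $Q_\Lambda(T)\ge Q_\Lambda(\varnothing)$ holds at $B^*$ for every admissible $T$. This is exactly the hypothesis of \cref{lem:fixed-c-empty-terminal}.

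\textbf{Step 3 (apply the mass bound).} \Cref{lem:fixed-c-empty-terminal} applied at $B^*$ gives
\[
 \Bigl[\Lambda+\sum_j(g_j+B_j^*)\Bigr]^\nu\le\Lambda^\nu+\sum_j(a+g_j)^\nu.
\]
Raising both sides to the power $p=1/\nu$ is legitimate because both sides are positive and $t\mapsto t^p$ is increasing. Multiplying by $c$ then yields
\[
 V_\Lambda(g,B^*)\le c\Bigl[\Lambda^\nu+\sum_j(a+g_j)^\nu\Bigr]^p,
\]
and the claim follows for every $B$ by Step 1.

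\textbf{Expected obstacle.} The substantive work is already contained in the three cited lemmas, so the argument itself is short. The only delicate point is making sure those lemmas apply with exactly the objects used here. Specifically, the hypothesis of \cref{lem:fixed-c-empty-terminal} quantifies over admissible sets at the particular vector $B^*$, where positions with $B_j^*=0$ are not selectable. The minimality in Step 2 is likewise over sets admissible at $B^*$, so the two notions of admissibility coincide and no gap arises. The case $r=0$ is trivial, since then $V_\Lambda=c\Lambda$.
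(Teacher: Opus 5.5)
Your proposal is correct and follows the paper's proof exactly: you take a maximizer $B^*$ from \cref{lem:fixed-c-attainment}, use \cref{lem:fixed-c-empty-at-max} to get $V_\Lambda(g,B^*)=Q_\Lambda(\varnothing)$, and then apply \cref{lem:fixed-c-empty-terminal} and raise to the power $p=1/\nu$. Your remark that admissibility is evaluated at the same vector $B^*$ in both lemmas is a correct, if minor, point that the paper leaves implicit.
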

\begin{proof}
Fix \(g\) and \(\Lambda\), and let \(B^*\) maximize \(V_\Lambda(g,B)\) over $B\in[0,\infty)^r$. Such a maximizer exists by \cref{lem:fixed-c-attainment}. By \cref{lem:fixed-c-empty-at-max}, the empty set minimizes $Q_\Lambda$ at \(B^*\).
Therefore \cref{lem:fixed-c-empty-terminal} gives
\[
    \left[
       \Lambda+\sum_j(g_j+B_j^*)
    \right]^\nu
      \le
    \Lambda^\nu+\sum_j(a+g_j)^\nu.
\]
Since \(p=1/\nu\),
\[
V_\Lambda(g,B)\le V_\Lambda(g,B^*)=Q_\Lambda(\varnothing)=c\left[\Lambda+\sum_j(g_j+B_j^*)\right]\le c\left[\Lambda^\nu+\sum_j(a+g_j)^\nu\right]^p
\]
for every \(B\), which proves the claim.
\end{proof}

\subsection{Bounding total mass when the maximum checkpoint score is one}
\label{app:fixed-c-mass}

We now consider the checkpoint score $P$ without a fixed checkpoint to the right of the list, and assume $\max_TP(T)=1$. Here the empty set maximizes $P$, whereas the empty-set condition in the preceding subsection concerned minimizing $Q_\Lambda$.

To bound the total mass under the condition \(\max_T P(T)=1\), we first control how many entries \(B_j\) can exceed a given threshold. The auxiliary-cost bound implies that too many such entries would allow a checkpoint set with score greater than one.
\begin{lemma}[A level-set bound]\label{lem:fixed-c-level-set}
Let $M=\sum_{j=1}^r(a+g_j)^\nu$. If $\max_TP(T)=1$, then for every \(q>0\),
\[
    \#\{j:B_j>q\}\le c^\nu Mq^{-\nu}.
\]
\end{lemma}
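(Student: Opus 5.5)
We argue by contradiction: if $N:=\#\{j:B_j>q\}$ exceeds $c^\nu Mq^{-\nu}$, we build a checkpoint set of score exceeding one. Let $J=\{j_1<\cdots<j_N\}$ be the positions with $B_j>q$. Form a modified auxiliary list by replacing every $B_j$ with $j\notin J$ by $0$, absorbing its mass into the adjacent $g$ entries, and replacing each $B_{j_i}$ by exactly $q$. Scores in the modified list control scores in the original one. For any checkpoint set $T\subseteq J$, lowering a selected entry to $q$ only decreases its numerator. Moving unselected mass between positions leaves every gap sum $s_j$ unchanged, and lowering an unselected $B_{j_i}$ to $q$ only shrinks gaps. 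So $P(T)\ge P'(T)$, the score in the modified list.

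To bound $P'$ from below, apply \cref{lem:fixed-c-terminal} to a suitable list with terminal parameter $\Lambda=a$. Group the modified list into $N$ pieces, the $i$th ending at checkpoint $j_i$. Piece $i$ has nonselectable mass $G_i$, equal to the sum of the $g$'s and zeroed $B$'s up to $j_i$, followed by the selectable entry $q$. Append a trailing nonselectable remainder. Subadditivity of $t\mapsto t^\nu$ gives
\[
 \sum_i(a+G_i)^\nu+(\text{remainder term})\le\sum_{j=1}^r(a+g_j)^\nu=M.
\]
The zeroed $B_j$ also contribute mass, though. I would handle this by noting that $\max_TP(T)=1$ and the singleton constraints bound each $B_j$ by $c(a+\text{preceding mass})$. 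A cleaner alternative is to apply \cref{lem:fixed-c-terminal} directly to the original list restricted to checkpoints in $J$, treating non-$J$ entries as nonselectable $g$'s. This mass bookkeeping is the main obstacle, and may cost a constant factor.

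The core step is a lower bound on $1/Q$. Take the list of $N$ pairs $(a+G_i,q)$ and choose the checkpoint set selecting all $N$ entries equal to $q$; this gives $Q_a(T)=c(a+s_{\rm tail})/\prod_i[q/c(a+G_i)]$. Rather than using a single $T$, invoke \cref{lem:fixed-c-terminal} on an ``inverted'' use: the bound $V\le c(\cdot)^p$ says the best $T$ has cost at most $c[a^\nu+M]^p$, hence some $T$ has
\[
P(T)\ge\frac{c(a+s_{\rm tail}(T))}{c[a^\nu+M]^p}.
\]
This alone does not exceed $1$, so I would instead use a counting argument. The sum $\sum_i(a+G_i)^\nu\le M$, together with the power-mean inequality, lets us pick the $N$ checkpoints, each with ratio $q/(c(a+G_i'))$, where $G_i'$ is the gap since the previous chosen checkpoint. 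Choosing every checkpoint in $J$ gives
\[
\log P=\sum_i\log\frac{q}{c(a+G_i')}\ge N\log\frac{q}{c}-N\log\Bigl(\frac1N\sum_i(a+G_i')\Bigr).
\]
Replace this Jensen step by the $\nu$-power version. By concavity,
\[
\frac1N\sum_i\log(a+G_i')\le p\log\Bigl(\frac1N\sum_i(a+G_i')^\nu\Bigr)\le p\log(M/N).
\]
So $P\ge\bigl(q/(c(M/N)^p)\bigr)^N$, which exceeds $1$ exactly when $N^p>c\,M^pq^{-1}$, i.e.\ $N>c^\nu Mq^{-\nu}$. This contradicts $\max P=1$.

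The remaining work is to verify $\sum_i(a+G_i')^\nu\le M$, with the zeroed $B_j$ mass included. I expect this to be resolved by iterating the argument on $q$ from above, or by absorbing those $B_j\le q$ via subadditivity with extra constants.
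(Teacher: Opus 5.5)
\textbf{There is a genuine gap.} Your closing computation (Jensen on $\nu$-th powers, giving $P\ge(q/c)^N(N/M)^{pN}$) is the same AM--GM counting step the paper uses. The problem is the inequality you defer, $\sum_i(a+G_i')^\nu\le M$: it is not bookkeeping, and it is false even under the hypothesis $\max_TP(T)=1$. Take $c=2$, $a=8$, $(g_1,B_1)=(0,16)$, $(g_2,B_2)=(0.8,17.6)$ and $q=16$. Every checkpoint set has score at most $1$, and $J=\{2\}$. Yet $(a+G_1')^\nu=24.8^\nu\approx2.18\cdot8^\nu$, while $M=8^\nu+8.8^\nu\approx2.07\cdot8^\nu$. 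Since this example satisfies every constraint, including the singleton ones, those constraints cannot close the inequality.

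The obstruction is structural: selecting only positions in $J$ cannot witness the contradiction. Without a further use of $\max_TP(T)=1$, your argument would show that $N>c^\nu Mq^{-\nu}$ forces $P(J)>1$ for every list, and this fails. Take $g_1=g_2=0$, $B_1=q$, $B_2=q+\epsilon$ with $0<\epsilon<a$ and $q>c\,2^pa$. Then $N=1>c^\nu Mq^{-\nu}$, but $P(\{2\})=(q+\epsilon)/\bigl(c(a+q)\bigr)<1$. Moreover $(a+G_1')^\nu/M\to\infty$ as $q\to\infty$, so no constant factor rescues the estimate. Here the score exceeding one comes only from sets that also select position $1$, whose entry does not exceed $q$. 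Your zeroing variant has the same defect: it moves this mass into the $g$'s and so inflates $M$.

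The missing idea is to keep the positions with $B_j\le q$ selectable and optimize over them gap by gap. This works because the bound on the minimum auxiliary cost in \cref{lem:fixed-c-terminal} does not depend on the $B$'s at all.
\begin{itemize}
\item For the gap between $j_{m-1}$ and $j_m$ (the first gap starts at the beginning of the list), use terminal offset $\Lambda=a+g_{j_m}\ge a$.
\item An optimal set of checkpoints inside the gap, followed by $j_m$, contributes the factor $B_{j_m}/V_m\ge q/(cM_m^p)$, where $M_m=\sum_{i=j_{m-1}+1}^{j_m}(a+g_i)^\nu$.
\item These groups are disjoint, so $\sum_mM_m\le M$.
\item Your AM--GM step, applied to the $M_m$ in place of $(a+G_i')^\nu$, gives $1=\max_TP(T)\ge(q/c)^N(N/M)^{pN}$, i.e.\ $N\le c^\nu Mq^{-\nu}$ with the exact constant.
\end{itemize}
You were close when you invoked \cref{lem:fixed-c-terminal} on the whole list. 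It must instead be applied separately to each gap preceding an element of $J$, with that large entry playing the role of the fixed checkpoint on the right.
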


\begin{proof}
Fix \(q>0\), and let
\[
    j_1<\cdots<j_k
\]
be the positions for which \(B_{j_m}>q\).
If $k=0$, the claim is immediate. Assume henceforth that $k\ge1$.
We construct a checkpoint set that contains all these \(k\) positions.
Before each selected position \(j_m\), independently optimize the checkpoints lying in the gap after \(j_{m-1}\) and before \(j_m\), with the convention that the first gap begins at the start of the list.
Appending \(B_{j_m}\) to the optimized checkpoints in its preceding gap contributes a factor
\[
    \frac{B_{j_m}}{V_m},
\]
where \(V_m\) is the minimum auxiliary cost of that gap with terminal offset \(a+g_{j_m}\). By \cref{lem:fixed-c-terminal},
\[
    V_m\le cM_m^p,
\]
where \(M_m\) is the sum of \((a+g_i)^\nu\) over the positions assigned to that gap, including its right endpoint \(j_m\).
The \(k\) groups are disjoint, and hence
\[
    \sum_{m=1}^k M_m\le M.
\]
Since \(B_{j_m}>q\), the constructed checkpoint score satisfies
\[
    \max_TP(T)
      \ge
    \prod_{m=1}^k\frac{q}{cM_m^p}.
\]
By the arithmetic--geometric mean inequality,
\[
    \prod_{m=1}^k M_m
      \le
    \left(\frac{1}{k}\sum_{m=1}^kM_m\right)^k
      \le
    \left(\frac{M}{k}\right)^k.
\]
Therefore
\[
    1=\max_TP(T)
      \ge
    \left(\frac qc\right)^k
    \left(\frac{k}{M}\right)^{pk}.
\]
Taking \(k\)-th roots and using \(p=1/\nu\), we obtain
\[
    1\ge \frac qc\left(\frac{k}{M}\right)^p,
\]
or equivalently
\[
    k\le c^\nu Mq^{-\nu}.
\]
\end{proof}

We now convert the preceding counting estimate into a bound on the total mass of the auxiliary list. Applied to the original stepsizes, this controls the mass of a block together with its incoming unselected tail when the empty set maximizes the score, and the remaining tail after the last checkpoint of a nonempty maximizing set.
\begin{lemma}\label{lem:fixed-c-mass}
Let $M=\sum_{j=1}^r(a+g_j)^\nu$. If $\max_TP(T)=1$, then
\[
 \sum_j(g_j+B_j)\le
 \left(1+\frac{c}{1-\nu}\right)M^p.
\]
\end{lemma}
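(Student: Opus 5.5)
The mass splits as $\sum_j g_j+\sum_j B_j$, and I bound the two parts separately. The $g$ part is elementary. Since $a\ge8$, we have $a+g_j\ge1$, and $t\mapsto t^\nu$ with $\nu<1$ gives $g_j\le a+g_j\le(a+g_j)^{\nu}\cdot(a+g_j)^{1-\nu}$. More simply, by superadditivity of $t\mapsto t^p$ for $p>1$,
\[
\sum_j(a+g_j)=\sum_j\bigl((a+g_j)^\nu\bigr)^p\le\Bigl(\sum_j(a+g_j)^\nu\Bigr)^p=M^p .
\]
Hence $\sum_j g_j\le M^p$, which accounts for the leading $1$ in the constant.

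For the $B$ part I use the layer-cake formula together with \cref{lem:fixed-c-level-set}:
\[
\sum_jB_j=\int_0^\infty\#\{j:B_j>q\}\,dq .
\]
The level-set bound $c^\nu Mq^{-\nu}$ is integrable near $0$ because $\nu<1$, but it is not integrable at infinity. I therefore need a cutoff: an a priori bound $B_j\le Q$ for every $j$, so that the integral runs only over $(0,Q)$. This gives
\[
\sum_jB_j\le\int_0^{Q}c^\nu Mq^{-\nu}\,dq=\frac{c^\nu M Q^{1-\nu}}{1-\nu}.
\]
With the choice $Q=cM^p$, this equals
\[
\frac{c^\nu M\,c^{1-\nu}M^{p(1-\nu)}}{1-\nu}=\frac{c\,M^{1+p-1}}{1-\nu}=\frac{c}{1-\nu}M^p,
\]
which is exactly the second term of the claimed constant. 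It also helps to use the trivial bound $\#\{j:B_j>q\}\le r$ for small $q$ if needed, though the integral above already converges without it.

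The main step is the cutoff $\max_jB_j\le cM^p$. Fix $j$ and consider the singleton-type set consisting of checkpoint $j$ preceded by an optimal selection in the prefix. By \cref{lem:fixed-c-terminal}, applied to the prefix list before $j$ with terminal offset $a+g_j$, the minimal auxiliary cost of that prefix is at most $cM_j^p\le cM^p$, where $M_j$ sums over positions $\le j$. Appending $B_j$ produces a checkpoint set with score $B_j/V\ge B_j/(cM^p)$; any tail after $j$ does not affect the score $P$, since $P$ has no terminal factor. Because $\max_T P(T)=1$, it follows that $B_j\le cM^p$, which is the same argument as the $k=1$ case of \cref{lem:fixed-c-level-set}. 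The only care needed is the case $B_j=0$, which is trivially fine, and the fact that $P$ ignores $s_{\rm tail}$, which is what lets the argument go through. Combining the two parts gives $\sum_j(g_j+B_j)\le\bigl(1+\tfrac{c}{1-\nu}\bigr)M^p$.
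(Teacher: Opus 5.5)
Your proof is correct and follows the paper's argument: you bound the $g$-part by $M^p$ using the power inequality (your superadditivity of $t^p$ is equivalent to the paper's subadditivity of $t^\nu$), and the $B$-part by the layer-cake formula with the level-set bound of \cref{lem:fixed-c-level-set} cut off at $\max_jB_j\le cM^p$. The only difference is cosmetic: you derive the cutoff directly from \cref{lem:fixed-c-terminal} applied to the prefix before $j$, whereas the paper reads it off the level-set bound by letting $q\uparrow\max_jB_j$, which is the same $k=1$ argument.
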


\begin{proof}
If the auxiliary list is empty, both sides of the claimed bound are zero. Otherwise, \cref{lem:fixed-c-level-set} gives
\[
    N(q):=\#\{j:B_j>q\}
      \le c^\nu Mq^{-\nu}.
\]
If $\max_jB_j>0$, take $0<q<\max_jB_j$. Then $N(q)\ge1$, and hence
\[
    1\le c^\nu Mq^{-\nu}.
\]
Letting \(q\uparrow\max_jB_j\) gives
\[
    \max_jB_j\le cM^{1/\nu}=cM^p.
\]
This bound also holds when all $B_j=0$. Therefore,
\[
\sum_jB_j=\int_0^{cM^p}N(q)\,\d q \le c^\nu M\int_0^{cM^p}q^{-\nu}\,\d q=\frac{c}{1-\nu}M^p.
\]

It remains to control the sum of $g_j$. Since \(0<\nu<1\), subadditivity gives
\[
    \left(\sum_jg_j\right)^\nu
      \le \sum_jg_j^\nu
      \le \sum_j(a+g_j)^\nu
      =M.
\]
Combining the two estimates gives
\[
    \sum_j(g_j+B_j)
      \le
    M^p+\frac{c}{1-\nu}M^p
      =
    \left(1+\frac{c}{1-\nu}\right)M^p.
\]
\end{proof}

\subsection{Deriving the checkpoint bounds}
\label{app:fixed-c-block}

We now return to the original stepsizes and the checkpoint score $P_w(T;H)$ to prove \cref{lem:fixed-c-block}.

\begin{proof}[Proof of \cref{lem:fixed-c-block}]
Let $h_1,\ldots,h_r$ be the stepsizes in the current block, where $r\le m$, and let $w$ be the mass of the unselected tail from preceding blocks.
Write
\[
    B_j=(h_j-8)_+,
    \qquad
    g_j=\min\{h_j,8\},
\]
so that \(h_j=g_j+B_j\). To incorporate the carried mass \(w\), define the auxiliary list
\[
    g_1+w,B_1,g_2,B_2,\ldots,g_r,B_r.
\]
A position is selectable in the auxiliary list exactly when \(B_j>0\), or equivalently when \(h_j>8\).
The auxiliary checkpoint score is exactly \(P_w(T;H)\).

For this auxiliary list, write
\[
    M=(a+g_1+w)^\nu+\sum_{j=2}^r(a+g_j)^\nu.
\]
Since \(0<\nu<1\), subadditivity of \(t^\nu\) gives
\[
    (a+g_1+w)^\nu
      \le w^\nu+(a+g_1)^\nu.
\]
Since \(g_j\le8\) for every \(j\), it follows that
\[
    M\le w^\nu+r(a+8)^\nu\le w^\nu+2^\nu r a^\nu.
\]
Using \((u+v)^p\le 2^{p-1}(u^p+v^p)\) and \(p\nu=1\), we obtain
\begin{equation}
    M^p
      \le C\bigl(w+a r^p\bigr)
      \le C\bigl(w+a m^p\bigr),
\label{eq:block-M-bound}
\end{equation}
where \(C\) is an absolute constant as the range of $p$ is bounded.

\paragraph{Proof of (i).}
Suppose that the empty set maximizes \(P_w\). Since \(P_w(\varnothing;H)=1\), this means
\[
    \max_T P_w(T;H)=1.
\]
We may therefore apply \cref{lem:fixed-c-mass} to the auxiliary list.
Its total mass is
\[
\begin{aligned}
    w+\sum_{j=1}^r(g_j+B_j)
      &=w+\sum_{j=1}^r h_j.
\end{aligned}
\]
Thus
\[
    w+\sum_{j=1}^r h_j
      \le
    \left(1+\frac{c}{1-\nu}\right)M^p.
\]
Both \(c\) and \((1-\nu)^{-1}\) are uniformly bounded over the stated range of parameters. Combining this estimate with \eqref{eq:block-M-bound} gives
\[
    w+\sum_{j=1}^r h_j
      \le C_b\bigl(w+a m^p\bigr).
\]
This proves part~(i).

\paragraph{Proof of (ii).}
Now let \(T=\{t_1<\cdots<t_k\}\) be a nonempty maximizing checkpoint set, and put \(t_0=0\).
For each \(i\), consider the positions after \(t_{i-1}\) and up to \(t_i\).
In this group, the selected position \(t_i\) serves as the right endpoint.
Including the endpoint term in each group, define
\[
    M_1=(a+g_1+w)^\nu+\sum_{j=2}^{t_1}(a+g_j)^\nu,\qquad
    M_i=\sum_{j=t_{i-1}+1}^{t_i}(a+g_j)^\nu\quad(2\le i\le k).
\]
We claim that
\begin{equation}\label{eq:block-di-Mi}
    d_i\le cM_i^p.
\end{equation}
To see this, temporarily keep the selected endpoint \(t_i\) fixed and consider inserting additional checkpoints into the gap preceding it.
If some such insertion, or collection of insertions, increased the product, then adding those checkpoints to \(T\) would produce a checkpoint set with product strictly larger than \(P_w(T;H)\). This would contradict the maximality of \(T\).
Thus the empty checkpoint set minimizes the auxiliary cost of this gap, and the minimum auxiliary cost is
\[
    c(a+s_i+8)=d_i.
\]
If $i=1$ and $t_1=1$, there are no unselected positions and $d_1=c(a+8+w)=cM_1^p$ directly. Otherwise, apply \cref{lem:fixed-c-terminal} to the unselected positions before $t_i$, with terminal offset $\Lambda=a+g_{t_i}=a+8$. The term $\Lambda^\nu$ is exactly the endpoint contribution in $M_i$; in the first gap, use $g_1+w$ as the first nonselectable entry. This gives \eqref{eq:block-di-Mi} in every case.
The groups defining the \(M_i\)'s are disjoint, so
\[
    \sum_{i=1}^k M_i\le M.
\]
Since \(p\ge1\),
\[
 \sum_i d_i\le c\sum_i M_i^p\le c\left(\sum_iM_i\right)^p
 \le cM^p\le C_b(w+a r^p).
\]
It remains to control the tail after \(t_k\). Consider the sublist
\[
    h_{t_k+1},\ldots,h_r.
\]
Its carried mass is zero, because the gap begins immediately after the selected checkpoint \(t_k\). If some checkpoint set in this tail had product greater than \(1\), adjoining it to \(T\) would multiply \(P_w(T;H)\) by a factor greater than \(1\), contradicting maximality.
Therefore the maximum checkpoint score in the tail equals \(1\).
If the tail is empty, its mass is zero. Otherwise apply \cref{lem:fixed-c-mass} to the tail. Since its corresponding quantity satisfies
\[
    M_{\rm tail}:=\sum_{j=t_k+1}^r(a+g_j)^\nu
      \le r(a+8)^\nu
      \le 2^\nu ra^\nu,
\]
we have
\[
    s_{\rm tail}
      =\sum_{j=t_k+1}^rh_j
      \le\left(1+\frac{c}{1-\nu}\right)M_{\rm tail}^p
      \le C_b a m^p.
\]
This proves part~(ii).

\paragraph{Proof of (iii).}
In parts (iii)--(iv), we return to the checkpoint-indexed notation $B_i:=h_{t_i}-8>0$ from \eqref{eq:carried-score}.
Consider two consecutive checkpoints \(t_i<t_{i+1}\), and write
\[
    x=a+s_i+8,\qquad
    y=a+s_{i+1}+8,\qquad
    B=B_i.
\]
Thus \(d_i=cx\) and \(d_{i+1}=cy\).
Delete the checkpoint at \(t_i\), while retaining all the other checkpoints. The two gaps adjacent to \(t_i\) then merge, and the step at \(t_i\), whose full stepsize is \(h_{t_i}=B+8\), becomes part of the merged gap.
The denominator at the next checkpoint is therefore
\[
\begin{aligned}
    c\bigl(a+s_i+(B+8)+s_{i+1}+8\bigr)
      &=c(x+y+B-a).
\end{aligned}
\]
All factors outside these two checkpoints remain unchanged. Before deletion, the relevant part of the product is
\[
    \frac{B}{cx}\frac{B_{i+1}}{cy},
\]
whereas after deletion it is
\[
    \frac{B_{i+1}}{c(x+y+B-a)}.
\]
Since \(T\) is a maximizer, deleting a checkpoint cannot increase its product. Therefore
\[
    \frac{B}{cx}\frac{B_{i+1}}{cy}
      \ge
    \frac{B_{i+1}}{c(x+y+B-a)}.
\]
Cancelling the positive common factors gives
\begin{equation}\label{eq:block-deletion-basic}
    B(x+y+B-a)\ge cxy.
\end{equation}
We next prove
\begin{equation}\label{eq:block-xyB}
    \frac{xy}{B}\le2(x+y).
\end{equation}
If \(B\le x+y\), then \eqref{eq:block-deletion-basic} gives
\[
    \frac{xy}{B}
      \le
    \frac{x+y+B-a}{c}
      \le x+y+B
      \le2(x+y),
\]
where we used \(c\ge1\) and \(a\ge0\).
If \(B>x+y\), then by the arithmetic--geometric mean inequality,
\[
    \frac{xy}{B}
      <
    \frac{xy}{x+y}
      \le\frac{x+y}{4}
      \le2(x+y).
\]
This proves \eqref{eq:block-xyB}.
Since \(d_i=cx\) and \(d_{i+1}=cy\), we now have
\[
    \frac{d_i d_{i+1}}{B_i}
      =c^2\frac{xy}{B}
      \le2c^2(x+y)
      =2c(d_i+d_{i+1})
      \le4(d_i+d_{i+1}),
\]
because \(c\le2\). Moreover, \(s_{i+1}\le y\), so
\[
    \frac{s_{i+1}d_i}{B_i}
      =c\,\frac{s_{i+1}x}{B}
      \le c\,\frac{xy}{B}
      \le2c(x+y)
      =2(d_i+d_{i+1}).
\]
This proves all the inequalities in part~(iii).

\paragraph{Proof of (iv).}
Delete only the final selected checkpoint \(t_k\). None of the factors associated with the earlier selected checkpoints changes. Therefore
\[
    P_w(T;H)
      =
    P_w(T\setminus\{t_k\};H)\frac{B_k}{d_k}.
\]
Because \(T\) is a maximizer,
\[
    P_w(T;H)\ge P_w(T\setminus\{t_k\};H).
\]
Both products are positive, and hence
\[
    \frac{B_k}{d_k}\ge1,
\]
which proves part~(iv).
\end{proof}

\section{\texorpdfstring{Proof of \Cref{lem:repair}}{Proof of the \getrefnumber{lem:repair}}}\label{app:repair}

We first prove the combinatorial fact used to choose one or two checkpoint indices. We then verify the recurrences and all four conclusions of \cref{lem:repair}.

\begin{lemma}\label{lem:repair-pair}
Let $h_1,\ldots,h_r\ge0$, where $r\ge1$, and let
$S':=\sum_{t=1}^r h_t>0$. At least one of the following alternatives holds:
\begin{enumerate}
\item There exist $1\le i<j\le r$ such that
\[
 h_i,h_j\ge\frac{S'}{8r^2},
 \qquad
 \sum_{t=i+1}^{j-1}h_t\le\min\{h_i,h_j\}.
\]
\item There exists $1\le i\le r$ such that $h_i\ge3S'/4$.
\end{enumerate}
\end{lemma}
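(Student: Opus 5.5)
The plan is a direct combinatorial argument: assume alternative~2 fails, so $h_t<3S'/4$ for every $t$, and produce the pair in alternative~1. Call an index \emph{heavy} if $h_t\ge\tau:=S'/(8r^2)$ and \emph{light} otherwise. The light entries have total mass less than $r\tau=S'/(8r)\le S'/8$, so the heavy entries carry mass more than $7S'/8$. Since no single entry reaches $3S'/4$, there are at least two heavy indices.

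\emph{First attempt (consecutive heavy indices).} Let the heavy indices be $l_1<\cdots<l_q$, and let $\gamma_m$ be the light mass strictly between $l_m$ and $l_{m+1}$. If some $m$ has $\gamma_m\le\min\{h_{l_m},h_{l_{m+1}}\}$, the pair $(l_m,l_{m+1})$ gives alternative~1 and we are done. Otherwise, for every $m$ the smaller of $h_{l_m},h_{l_{m+1}}$ is less than $\gamma_m$. Mark that smaller entry; the marked heavy entries then have total mass at most $\sum_m\gamma_m<S'/8$. The unmarked heavy entries carry mass more than $3S'/4$, and no two of them are consecutive in the heavy ordering. Since each single entry is below $3S'/4$, there are at least two unmarked heavy entries.

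\emph{Closing the argument (the main obstacle).} I would finish by an induction on $r$ that tolerates a threshold loss, which is where the $8r^2$ comes from. Pass to the coarsened list in which each maximal run between two consecutive unmarked heavy entries is merged into one nonselectable block. The mass of such a block is at most about twice the adjacent light mass, since each marked entry is charged to a gap. If some two consecutive unmarked entries dominate the merged block between them, they give alternative~1. Otherwise I would repeat the marking step on the shorter list of unmarked entries, whose length at least halves. Each round loses at most a fraction of the current mass to marked entries, and after $O(\log r)$ rounds only one entry would remain, which would then exceed $3S'/4$, a contradiction.

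The hard part is the bookkeeping that keeps the total loss across rounds below $S'/4$. With $\tau=S'/(8r^2)$, the light mass is only $S'/(8r)$, which leaves room for the geometric losses of the rounds. If this multi-round scheme gets messy, I would instead try the more direct choice: let $M$ be the index of the maximum, take the side of $M$ with more mass, and scan outward from $M$ for the first heavy $j$ with $h_j$ at least the intermediate sum. The remaining check there is $h_M\ge$ that intermediate sum, which I would try to force by restricting to the heavier side and inducting.
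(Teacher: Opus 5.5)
Your main route, the iterated marking (tournament) scheme, is correct and genuinely different from the paper's. The bookkeeping you flag as the obstacle closes in a few lines. The right invariant is that the \emph{excluded} mass at most doubles per round, not that the active mass loses a fraction.

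Let $q\ge2$ be the number of heavy indices, $q_k$ the number of active entries at the start of round $k$ (so $q_1=q$), and $L_k$ the total mass of entries not active after round $k$. Thus $L_0\le(r-q)\tau$ is the light mass. The merged block between two consecutive actives is exactly the original intermediate sum, and both actives are heavy. So if alternative~1 fails, the smaller entry of every consecutive active pair lies strictly below its block. Each marked entry can be charged to a distinct interior block, which gives $L_k\le2L_{k-1}$. Each of the $q_k-1$ consecutive pairs marks exactly one entry, and no two consecutive actives both survive. Hence $1\le q_{k+1}\le\lceil q_k/2\rceil$, and so $q_k\le\lceil q/2^{k-1}\rceil$. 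If $R$ rounds are needed to leave a single survivor, then $q_R\ge2$ forces $2^{R-1}<q$, whence
\[
 L_R\le 2^R L_0\le 2q(r-q)\tau\le\frac{r^2\tau}{2}=\frac{S'}{16}.
\]
The survivor then has mass $S'-L_R>3S'/4$, contradicting the failure of alternative~2. A bare ``$O(\log r)$ rounds'' would not suffice: the extra factor $r$ in $8r^2$ is absorbed only through the exact count $2^{R-1}<q\le r$.

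The paper instead argues recursively around the maximum. Assuming alternative~1 fails, it proves by strong induction, splitting at a maximal entry, the following bounds. A separated sequence in which every heavy entry is smaller than the total mass preceding it has sum at most $2\tau k^2$. If every heavy entry is smaller than the mass on each side, the sum is at most $\tau k^2$. It then applies this to the two sides of the global maximum to get $S'-\max_t h_t\le 2\tau r^2=S'/4$.

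Your argument treats all consecutive heavy pairs in parallel and needs no one-sided/two-sided case analysis; as computed, it even has slack in the constant. The paper's is a single induction anchored at the maximum.

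Your fallback is essentially the paper's route, but it misplaces the difficulty. If $M$ is the maximum and $j$ is heavy with $h_j\ge\sum_{M<t<j}h_t$, then $h_M\ge h_j$ makes your ``remaining check'' automatic. What actually needs proof is that a side containing no such $j$ has small mass. That is exactly the paper's one-sided claim, with its nested induction.
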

\begin{proof}
Suppose Alternative 1 is false. Then setting $\tau = \frac{S'}{8r^2}$, the array must satisfy the following condition:
\begin{equation}\label{eq:separation}
\text{For any } 1 \le i < j \le r, \text{ if } h_i, h_j \ge \tau, \text{ then } \sum_{t=i+1}^{j-1} h_t > \min\{h_i, h_j\}.
\end{equation}

\paragraph{Claim.}
Consider a sequence $a_1,\ldots,a_k\ge0$ satisfying \eqref{eq:separation} with $a_i$ in place of $h_i$, $k$ in place of $r$, and the same threshold $\tau$. We use two additional conditions:
\begin{itemize}
    \item \textit{one-sided:} For all $i$, if $a_i \ge \tau$, then $a_i < \sum_{v=1}^{i-1} a_v$;
    \item \textit{two-sided:} For all $i$, if $a_i \ge \tau$, then $a_i < \min\left( \sum_{v=1}^{i-1} a_v, \sum_{v=i+1}^{k} a_v \right)$.
\end{itemize}
We claim that any sequence of length $k$ satisfying \eqref{eq:separation} has a total sum $\sum_{i=1}^k a_i$ bounded by $2\tau k^2$ if it is one-sided, and by $\tau k^2$ if it is two-sided. 

\paragraph{Proof of Claim.} We prove by strong induction on $k$.
The base case $k=0$ holds trivially since the sum is $0$. For $k \ge 1$, if every $a_i < \tau$, the total sum is strictly bounded by $\sum_{i=1}^k a_i < k\tau \le \tau k^2$, satisfying both cases. 

Otherwise, choose a maximum element $b = a_z \ge \tau$. We partition the sequence into a left part $L = (a_1, \dots, a_{z-1})$ and a right part $R = (a_{z+1}, \dots, a_k)$ with lengths $k_L$ and $k_R$, and let $S_L = \sum_{v=1}^{z-1} a_v$ and $S_R = \sum_{v=z+1}^k a_v$ be their respective sums. Both inherit \eqref{eq:separation}. Because $b$ is the maximum, applying \eqref{eq:separation} between $b$ and any element $a_i \ge \tau$ imposes strict new bounds directed toward $b$:
\begin{itemize}
    \item For $i < z$, the inequality $a_i < \sum_{v=i+1}^{z-1} a_v$ gives $L$ a right-sided bound.
    \item For $i > z$, the inequality $a_i < \sum_{v=z+1}^{i-1} a_v$ gives $R$ a left-sided bound.
\end{itemize}
Now we evaluate the sequence $(L,b,R)$ based on whether it is one-sided or two-sided:
\begin{itemize}
    \item \textit{If two-sided:} $L$ is two-sided by inheriting its left-sided bound. Similarly, $R$ is two-sided by inheriting its right-sided bound. The two-sided property of the full sequence also requires $b < \min(S_L, S_R)$. Using the inductive bounds for $L$ and $R$:
    \[
    S_L + b + S_R < S_L + \min(S_L, S_R) + S_R \le \tau k_L^2 + \tau \min(k_L^2, k_R^2) + \tau k_R^2 \le \tau(k_L + k_R)^2 < \tau k^2.
    \]
    \item \textit{If one-sided:} $L$ is two-sided by inheriting its left bound. $R$ is one-sided. The one-sided property of the full sequence requires $b < S_L$. Thus:
    \[
    S_L + b + S_R < 2S_L + S_R \le 2\tau k_L^2 + 2\tau k_R^2 \le 2\tau(k_L + k_R)^2 < 2\tau k^2.
    \]
\end{itemize}
This completes the induction for the claim.

\paragraph{Applying the Claim.} 
Return to the original array $h_1, \ldots, h_r$. If all $h_i < \tau$, then $S' < r\tau = S'/(8r) \le S'/8$, which contradicts $S' > 0$. Thus, there exists a maximum element $b = h_z \ge \tau$. 
By \eqref{eq:separation}, for any $i < z$ with $h_i \ge \tau$, we have $h_i < \sum_{t=i+1}^{z-1} h_t$. Thus the reversed left tail $(h_{z-1},\ldots,h_1)$ is one-sided. Similarly, for $i > z$, $h_i < \sum_{t=z+1}^{i-1} h_t$, meaning the right tail $(h_{z+1}, \ldots, h_r)$ is also one-sided.
Applying the claim to both tails:
\[
S' - b = S_L + S_R \le 2\tau k_L^2 + 2\tau k_R^2 \le 2\tau(k_L + k_R)^2 \le 2\tau r^2.
\]
Substituting $\tau = \frac{S'}{8r^2}$ yields $S' - b \le 2\left(\frac{S'}{8r^2}\right)r^2 = \frac{S'}{4}$. Therefore, the maximum element $b \ge \frac{3S'}{4}$, proving Alternative 2.
\end{proof}

We apply \Cref{lem:repair-pair} to the final portion $h_{j_*},\ldots,h_r$ specified in \cref{lem:repair}. A stepsize at least $S_0/16$ gives a single Huber checkpoint. Otherwise, we obtain two stepsizes $b_1,b_2\ge S_0/(64r^2)$ whose intervening gap mass satisfies $s_2\le\min\{b_1,b_2\}$. 
In this case, the first checkpoint may leave a large threshold-to-amplitude ratio.
The inequality $s_2\le b_1$ ensures $\eta_2>0$, so the second transfer remains feasible, while $s_2\le b_2$ bounds its outgoing ratio $\ell_3/D_3$. The lower bounds on $b_1$ and $b_2$ yield the amplitude bound in \eqref{eq:repair-amplitude-bound}.

\begin{proof}[Proof of \cref{lem:repair}]
Fix $C_0\ge1$ and $R=32C_0$, and choose
\begin{equation*}
 \varepsilon_0=\frac{1}{8(R+1)},
 \qquad
 c_0=\frac{1}{[4096(1+C_0)]^2}.
\end{equation*}
Let the data satisfy the hypotheses of the lemma. Then
\[
 \frac{S}{\kappa}\le\frac{C_0S_0}{\kappa}
 \le\varepsilon_0<\frac18,
\]
Thus every repair gap satisfies $s_i\le S<\kappa$, and every stepsize lies in $[0,\kappa)$.

\paragraph{Step 1. Choosing the checkpoints.}
Let $j_*$ be the index in \eqref{eq:repair-last-updates}, which exists because $S\ge S_0$. Set $S':=\sum_{t=j_*}^r h_t$. By maximality of $j_*$,
\begin{equation}\label{eq:repair-suffix-tail}
 S'>\frac{S_0}{8},\qquad
 \sum_{t=j_*+1}^r h_t\le\frac{S_0}{8}.
\end{equation}
We choose the checkpoints from $\{j_*,\ldots,r\}$ as follows.

If this list contains a stepsize at least $S_0/16$, choose its index as $t_1$ and set $q=1$. Thus $b_1=h_{t_1}\ge S_0/16$.
Otherwise, every stepsize in the list is less than $S_0/16$. Apply \cref{lem:repair-pair} to this list. Its second alternative is impossible, since it would give a stepsize at least $3S'/4>3S_0/32>S_0/16$. The first alternative therefore supplies indices $j_*\le t_1<t_2\le r$. Set $q=2$ and $b_i=h_{t_i}$ for $i=1,2$. Since the list has at most $r$ entries,
\begin{equation}\label{eq:repair-pair-data}
 b_1,b_2\ge\frac{S'}{8r^2}>\frac{S_0}{64r^2},
 \qquad
 s_2=\sum_{t=t_1+1}^{t_2-1}h_t\le\min\{b_1,b_2\}.
\end{equation}
In both cases, the selected stepsizes are positive and conclusion (iv) holds. Moreover, $t_q\ge j_*$ and \eqref{eq:repair-suffix-tail} give
\[
 \sum_{t=t_q+1}^r h_t\le\frac{S_0}{8}\le\frac{S_0}{4},
\]
which proves (iii).

\paragraph{Step 2. Positivity and threshold-to-amplitude ratios.}
For the chosen checkpoints, use $s_i$ and $\chi_i$ from \eqref{eq:repair-indexed-gaps}. The product inequality $\prod_t(1-a_t)\ge1-\sum_t a_t$ for $a_t\in[0,1]$ gives, for every $1\le i\le q$,
\begin{equation*}
 \chi_i\ge1-\frac{s_i}{\kappa}\ge1-\frac S\kappa\ge\frac12,
 \qquad
 0\le\kappa(1-\chi_i)\le s_i.
\end{equation*}
Starting from $(\ell_1,D_1)=(\ell,D)$, define the remaining parameters successively by \eqref{eq:repair-indexed-construction}. Whenever $\ell_i\ge0$, $D_i>0$, and $\eta_i>0$, these recurrences give $\delta_{\mathrm H,i}>0$, $\ell_{i+1}\ge0$, $D_{i+1}>0$, and
\begin{equation}\label{eq:repair-exact-transfer}
 \begin{aligned}
 \frac{D_{i+1}}{D_i}
 &=\frac{(1-1/\kappa)b_i\chi_i\eta_i}
 {2[2+(\kappa-1)(1-\chi_i)]},\\
 \frac{\ell_{i+1}}{D_{i+1}}
 &=\frac{\kappa(1-\chi_i)}{b_i\chi_i}
 \le\frac{2s_i}{b_i}.
 \end{aligned}
\end{equation}
For the first checkpoint, the initial bound $\ell/D\le R$ implies
\begin{equation*}
 \eta_1
 =1-(1-\chi_1)\left(1+\frac\ell D\right)
 \ge1-(R+1)\frac S\kappa
 \ge\frac78.
\end{equation*}
Thus $D_2>0$, and \eqref{eq:repair-exact-transfer} applies to the first transfer.

If $q=2$, the first outgoing ratio may be large. The bound $s_2\le b_1$ in \eqref{eq:repair-pair-data} nevertheless gives
\begin{equation*}
 \begin{aligned}
 1-\eta_2
 &=(1-\chi_2)\left(1+\frac{\ell_2}{D_2}\right)
 \le\frac{s_2}{\kappa}\left(1+\frac{2s_1}{b_1}\right)\\
 &\le\frac{s_2+2s_1}{\kappa}
 \le\frac{2S}{\kappa}\le\frac14.
 \end{aligned}
\end{equation*}
Here $s_1+s_2\le S$ because the two gap sums run over disjoint index sets. Hence $\eta_2\ge3/4$ and $D_3>0$. All the recurrences are therefore well defined, with $\eta_i\ge1/2$ for every $1\le i\le q$.

Finally, \eqref{eq:repair-exact-transfer} and the checkpoint choices give
\[
 \begin{aligned}
 \frac{\ell_2}{D_2}&\le\frac{2s_1}{b_1}
 \le\frac{2C_0S_0}{S_0/16}=R
 &&\text{if }q=1,\\
 \frac{\ell_3}{D_3}&\le\frac{2s_2}{b_2}\le2\le R
 &&\text{if }q=2,
 \end{aligned}
\]
where we used the bound $s_2\le b_2$ in \eqref{eq:repair-pair-data}.
This proves (ii). 

\paragraph{Step 3. Bounding the amplitude loss.}
Since $\kappa\ge4$, $\chi_i\ge1/2$, and $\eta_i\ge1/2$, the first identity in \eqref{eq:repair-exact-transfer} gives
\begin{equation}\label{eq:repair-coarse-transfer}
 \frac{D_{i+1}}{D_i}
 \ge\frac{3b_i}{64(1+S)}
 \ge\frac{b_i}{64(1+C_0)S_0}
 \qquad(1\le i\le q).
\end{equation}
We used $2+(\kappa-1)(1-\chi_i)\le2+s_i\le2(1+S)$, followed by $S\le C_0S_0$ and $S_0\ge1$.

If $q=1$, then $b_1\ge S_0/16$, so
\[
 \frac{D_2}{D_1}\ge\frac{1}{1024(1+C_0)}
 \ge c_0(r+1)^{-4}.
\]
If $q=2$, then \eqref{eq:repair-pair-data} and \eqref{eq:repair-coarse-transfer} give
\[
 \frac{D_3}{D_1}
 =\frac{D_2}{D_1}\frac{D_3}{D_2}
 \ge\frac{1}{[4096(1+C_0)r^2]^2}
 =c_0r^{-4}\ge c_0(r+1)^{-4}.
\]
Together with the positivity established in Step~2, these bounds prove (i). 
\end{proof}

\end{document}